\documentclass[10pt]{amsart}
\textwidth=14.5cm \oddsidemargin=1cm
\evensidemargin=1cm
\usepackage{amsmath}
\usepackage{amsxtra}
\usepackage{amscd}
\usepackage{amsthm}
\usepackage{amsfonts}
\usepackage{amssymb}
\usepackage{eucal}
\usepackage[all]{xy}
\usepackage{graphicx}

\newtheorem{cor}[subsubsection]{Corollary}
\newtheorem{lem}[subsubsection]{Lemma}
\newtheorem{prop}[subsubsection]{Proposition}

\newtheorem{conj}[subsubsection]{Conjecture}
\newtheorem{thm}[subsubsection]{Theorem}



\theoremstyle{definition}

\newcommand{\thmref}[1]{Theorem~\ref{#1}}
\newcommand{\secref}[1]{Sect.~\ref{#1}}
\newcommand{\lemref}[1]{Lemma~\ref{#1}}
\newcommand{\propref}[1]{Proposition~\ref{#1}}
\newcommand{\corref}[1]{Corollary~\ref{#1}}
\newcommand{\conjref}[1]{Conjecture~\ref{#1}}

\theoremstyle{remark}
\newtheorem{rem}[subsubsection]{Remark}

\numberwithin{equation}{section}

\newcommand{\nc}{\newcommand}
\nc{\renc}{\renewcommand}
\nc{\ssec}{\subsection}
\nc{\sssec}{\subsubsection}
\nc{\on}{\operatorname}

\nc\ol{\overline}
\nc\wt{\widetilde}
\nc\tboxtimes{\wt{\boxtimes}}
\nc{\alp}{\alpha}

\nc{\ZZ}{{\mathbb Z}}
\nc{\NN}{{\mathbb N}}
\nc{\OO}{{\mathbb O}}
\renc{\SS}{{\mathbb S}}
\nc{\DD}{{\mathbb D}}
\nc{\GG}{{\mathbb G}}

\nc{\Fq}{{\mathbb F}_q}
\nc{\Fqb}{\ol{{\mathbb F}_q}}
\nc{\Ql}{\ol{{\mathbb Q}_\ell}}
\nc{\id}{\text{id}}
\nc\X{\mathcal X}

\nc{\Hom}{\on{Hom}}
\nc{\Lie}{\on{Lie}}
\nc{\Loc}{\on{Loc}}
\nc{\Pic}{\on{Pic}}
\nc{\Bun}{\on{Bun}}
\nc{\IC}{\on{IC}}
\nc{\Aut}{\on{Aut}}
\nc{\rk}{\on{rk}}
\nc{\Sh}{\on{Sh}}
\nc{\Perv}{\on{Perv}}
\nc{\pos}{{\on{pos}}}
\nc{\Conv}{\on{Conv}}
\nc{\Sph}{\on{Sph}}
\nc{\Sym}{\on{Sym}}
\nc{\BunBb}{\overline{\Bun}_B}
\nc{\BunBbm}{\overline{\Bun}_{B^-}}
\nc{\BunBbel}{\overline{\Bun}_{B,el}}
\nc{\BunBbmel}{\overline{\Bun}_{B^-,el}}
\nc{\Buno}{\overset{o}{\Bun}}
\nc{\BunPb}{{\overline{\Bun}_P}}
\nc{\BunBM}{\Bun_{B(M)}}
\nc{\BunBMb}{\overline{\Bun}_{B(M)}}
\nc{\BunPbw}{{\widetilde{\Bun}_P}}
\nc{\BunBP}{\widetilde{\Bun}_{B,P}}
\nc{\GUb}{\overline{G/U}}
\nc{\GUPb}{\overline{G/U(P)}}

\nc{\Hhom}{\underline{\on{Hom}}}
\nc\syminfty{\on{Sym}^{\infty}}
\nc\lal{\ol{\lambda}}
\nc\xl{\ol{x}}
\nc\thl{\ol{\theta}}
\nc\nul{\ol{\nu}}
\nc\mul{\ol{\mu}}
\nc\Sum\Sigma
\nc{\oX}{\overset{o}{X}{}}
\nc{\hl}{\overset{\leftarrow}h{}}
\nc{\hr}{\overset{\rightarrow}h{}}
\nc{\M}{{\mathcal M}}
\nc{\N}{{\mathcal N}}
\nc{\F}{{\mathcal F}}
\nc{\D}{{\mathcal D}}
\nc{\Q}{{\mathcal Q}}
\nc{\Y}{{\mathcal Y}}
\nc{\G}{{\mathcal G}}
\nc{\E}{{\mathcal E}}
\nc{\CalC}{{\mathcal C}}
\nc\Dh{\widehat{\D}}

\nc{\C}{{\mathcal C}}
\nc{\K}{{\mathcal K}}
\renewcommand{\H}{{\mathcal H}}

\nc{\T}{{\mathcal T}}
\nc{\V}{{\mathcal V}}
\renc{\P}{{\mathcal P}}
\nc{\A}{{\mathcal A}}
\nc{\B}{{\mathcal B}}
\nc{\U}{{\mathcal U}}

\nc{\Gr}{{\on{Gr}}}

\nc{\frn}{{\check{\mathfrak u}(P)}}
\nc{\p}{\mathfrak p}
\nc{\q}{\mathfrak q}
\nc\f{{\mathfrak f}}

\nc{\qo}{{\mathfrak q}}
\nc{\po}{{\mathfrak p}}
\nc{\s}{{\mathfrak s}}
\nc\w{\text{w}}

\nc\mathi\iota
\nc\Spec{\on{Spec}}
\nc\Mod{\on{Mod}}
\nc{\tw}{\widetilde{\mathfrak t}}
\nc{\pw}{\widetilde{\mathfrak p}}
\nc{\qw}{\widetilde{\mathfrak q}}
\nc{\jw}{\widetilde j}

\nc{\grb}{\overline{\Gr}}
\nc{\I}{\mathcal I}

\nc{\lambdach}{{\check\lambda}}
\nc{\Lambdach}{{\check\Lambda}{}}
\nc{\much}{{\check\mu}}
\nc{\omegach}{{\check\omega}}
\nc{\nuch}{{\check\nu}}
\nc{\etach}{{\check\eta}}
\nc{\alphach}{{\check\alpha}}
\nc{\betach}{{\check\beta}}
\nc{\rhoch}{{\check\rho}}
\nc{\ch}{{\check h}}

\nc{\Hb}{\overline{\H}}


\emergencystretch=2cm

\nc{\BA}{{\mathbb{A}}}
\nc{\BC}{{\mathbb{C}}}
\nc{\BG}{{\mathbb{G}}}
\nc{\BL}{{\mathbb{L}}}
\nc{\BM}{{\mathbb{M}}}
\nc{\BD}{{\mathbb{D}}}
\nc{\BN}{{\mathbb{N}}}
\nc{\BP}{{\mathbb{P}}}
\nc{\BQ}{{\mathbb{Q}}}
\nc{\BR}{{\mathbb{R}}}
\nc{\BZ}{{\mathbb{Z}}}
\nc{\BS}{{\mathbb{S}}}

\nc{\CA}{{\mathcal{A}}}
\nc{\CB}{{\mathcal{B}}}

\nc{\CE}{{\mathcal{E}}}
\nc{\CF}{{\mathcal{F}}}

\nc{\CL}{{\mathcal{L}}}
\nc{\CC}{{\mathcal{C}}}
\nc{\CM}{{\mathcal{M}}}
\nc{\CN}{{\mathcal{N}}}
\nc{\CK}{{\mathcal{K}}}
\nc{\CO}{{\mathcal{O}}}
\nc{\CP}{{\mathcal{P}}}
\nc{\CQ}{{\mathcal{Q}}}
\nc{\CR}{{\mathcal{R}}}
\nc{\CS}{{\mathcal{S}}}
\nc{\CT}{{\mathcal{T}}}
\nc{\CU}{{\mathcal{U}}}
\nc{\CV}{{\mathcal{V}}}
\nc{\CW}{{\mathcal{W}}}
\nc{\CX}{{\mathcal{X}}}
\nc{\CY}{{\mathcal{Y}}}
\nc{\CZ}{{\mathcal{Z}}}
\nc{\CI}{{\mathcal{I}}}

\nc{\csM}{{\check{\mathcal A}}{}}
\nc{\oM}{{\overset{\circ}{\mathcal M}}{}}
\nc{\obM}{{\overset{\circ}{\mathbf M}}{}}
\nc{\oCA}{{\overset{\circ}{\mathcal A}}{}}
\nc{\obA}{{\overset{\circ}{\mathbf A}}{}}
\nc{\ooM}{{\overset{\circ}{M}}{}}
\nc{\osM}{{\overset{\circ}{\mathsf M}}{}}
\nc{\vM}{{\overset{\bullet}{\mathcal M}}{}}
\nc{\nM}{{\underset{\bullet}{\mathcal M}}{}}
\nc{\oD}{{\overset{\circ}{\mathcal D}}{}}
\nc{\obD}{{\overset{\circ}{\mathbf D}}{}}
\nc{\oA}{{\overset{\circ}{\mathbb A}}{}}
\nc{\op}{{\overset{\bullet}{\mathbf p}}{}}
\nc{\cp}{{\overset{\circ}{\mathbf p}}{}}
\nc{\oU}{{\overset{\bullet}{\mathcal U}}{}}
\nc{\oZ}{{\overset{\circ}{\mathcal Z}}{}}
\nc{\ofZ}{{\overset{\circ}{\mathfrak Z}}{}}
\nc{\oF}{{\overset{\circ}{\fF}}}

\nc{\fa}{{\mathfrak{a}}}
\nc{\fb}{{\mathfrak{b}}}
\nc{\fd}{{\mathfrak{d}}}
\nc{\fg}{{\mathfrak{g}}}
\nc{\fgl}{{\mathfrak{gl}}}
\nc{\fh}{{\mathfrak{h}}}
\nc{\fj}{{\mathfrak{j}}}
\nc{\fl}{{\mathfrak{l}}}
\nc{\fm}{{\mathfrak{m}}}
\nc{\fn}{{\mathfrak{n}}}
\nc{\fu}{{\mathfrak{u}}}
\nc{\fp}{{\mathfrak{p}}}
\nc{\fr}{{\mathfrak{r}}}
\nc{\fs}{{\mathfrak{s}}}
\nc{\ft}{{\mathfrak{t}}}
\nc{\fsl}{{\mathfrak{sl}}}
\nc{\hsl}{{\widehat{\mathfrak{sl}}}}
\nc{\hgl}{{\widehat{\mathfrak{gl}}}}
\nc{\hg}{{\widehat{\mathfrak{g}}}}
\nc{\chg}{{\widehat{\mathfrak{g}}}{}^\vee}
\nc{\hn}{{\widehat{\mathfrak{n}}}}
\nc{\chn}{{\widehat{\mathfrak{n}}}{}^\vee}

\nc{\fA}{{\mathfrak{A}}}
\nc{\fB}{{\mathfrak{B}}}
\nc{\fD}{{\mathfrak{D}}}
\nc{\fE}{{\mathfrak{E}}}
\nc{\fF}{{\mathfrak{F}}}
\nc{\fG}{{\mathfrak{G}}}
\nc{\fK}{{\mathfrak{K}}}
\nc{\fL}{{\mathfrak{L}}}
\nc{\fM}{{\mathfrak{M}}}
\nc{\fN}{{\mathfrak{N}}}
\nc{\fP}{{\mathfrak{P}}}
\nc{\fU}{{\mathfrak{U}}}
\nc{\fV}{{\mathfrak{V}}}
\nc{\fZ}{{\mathfrak{Z}}}

\nc{\bb}{{\mathbf{b}}}
\nc{\bc}{{\mathbf{c}}}
\nc{\bd}{{\mathbf{d}}}
\nc{\be}{{\mathbf{e}}}
\nc{\bbf}{{\mathbf{f}}}
\nc{\bj}{{\mathbf{j}}}
\nc{\bn}{{\mathbf{n}}}
\nc{\bp}{{\mathbf{p}}}
\nc{\bq}{{\mathbf{q}}}
\nc{\bu}{{\mathbf{u}}}
\nc{\bv}{{\mathbf{v}}}
\nc{\bx}{{\mathbf{x}}}
\nc{\bs}{{\mathbf{s}}}
\nc{\by}{{\mathbf{y}}}
\nc{\bw}{{\mathbf{w}}}
\nc{\bA}{{\mathbf{A}}}
\nc{\bK}{{\mathbf{K}}}
\nc{\bB}{{\mathbf{B}}}
\nc{\bC}{{\mathbf{C}}}
\nc{\bG}{{\mathbf{G}}}
\nc{\bD}{{\mathbf{D}}}
\nc{\bH}{{\mathbf{H}}}
\nc{\bM}{{\mathbf{M}}}
\nc{\bN}{{\mathbf{N}}}
\nc{\bV}{{\mathbf{V}}}
\nc{\bW}{{\mathbf{W}}}
\nc{\bX}{{\mathbf{X}}}
\nc{\bZ}{{\mathbf{Z}}}
\nc{\bS}{{\mathbf{S}}}

\nc{\sA}{{\mathsf{A}}}
\nc{\sB}{{\mathsf{B}}}
\nc{\sC}{{\mathsf{C}}}
\nc{\sD}{{\mathsf{D}}}
\nc{\sF}{{\mathsf{F}}}
\nc{\sG}{{\mathsf{G}}}
\nc{\sK}{{\mathsf{K}}}
\nc{\sM}{{\mathsf{M}}}
\nc{\sO}{{\mathsf{O}}}
\nc{\sW}{{\mathsf{W}}}
\nc{\sQ}{{\mathsf{Q}}}
\nc{\sP}{{\mathsf{P}}}
\nc{\sZ}{{\mathsf{Z}}}
\nc{\sfp}{{\mathsf{p}}}
\nc{\sfq}{{\mathsf{q}}}
\nc{\sr}{{\mathsf{r}}}
\nc{\sk}{{\mathsf{k}}}
\nc{\sg}{{\mathsf{g}}}
\nc{\sff}{{\mathsf{f}}}
\nc{\sfb}{{\mathsf{b}}}
\nc{\sfc}{{\mathsf{c}}}
\nc{\sd}{{\mathsf{d}}}

\nc{\BK}{{\bar{K}}}

\nc{\tA}{{\widetilde{\mathbf{A}}}}
\nc{\tB}{{\widetilde{\mathcal{B}}}}
\nc{\tg}{{\widetilde{\mathfrak{g}}}}
\nc{\tG}{{\widetilde{G}}}
\nc{\TM}{{\widetilde{\mathbb{M}}}{}}
\nc{\tO}{{\widetilde{\mathsf{O}}}{}}
\nc{\tU}{{\widetilde{\mathfrak{U}}}{}}
\nc{\TZ}{{\tilde{Z}}}
\nc{\tx}{{\tilde{x}}}
\nc{\tbv}{{\tilde{\bv}}}
\nc{\tfP}{{\widetilde{\mathfrak{P}}}{}}
\nc{\tz}{{\tilde{\zeta}}}
\nc{\tmu}{{\tilde{\mu}}}

\nc{\urho}{\underline{\rho}}
\nc{\uB}{\underline{B}}
\nc{\uC}{{\underline{\mathbb{C}}}}
\nc{\ui}{\underline{i}}
\nc{\uj}{\underline{j}}
\nc{\ofP}{{\overline{\mathfrak{P}}}}
\nc{\oB}{{\overline{\mathcal{B}}}}
\nc{\og}{{\overline{\mathfrak{g}}}}
\nc{\oI}{{\overline{I}}}

\nc{\eps}{\varepsilon}
\nc{\hrho}{{\hat{\rho}}}

\nc{\one}{{\mathbf{1}}}
\nc{\two}{{\mathbf{t}}}

\nc{\Rep}{{\mathop{\operatorname{\rm Rep}}}}
\nc{\Tot}{{\mathop{\operatorname{\rm Tot}}}}
\nc{\Ker}{{\mathop{\operatorname{\rm Ker}}}}
\nc{\Hilb}{{\mathop{\operatorname{\rm Hilb}}}}
\nc{\End}{{\mathop{\operatorname{\rm End}}}}
\nc{\Ext}{{\mathop{\operatorname{\rm Ext}}}}
\nc{\CHom}{{\mathop{\operatorname{{\mathcal{H}}\it om}}}}
\nc{\GL}{{\mathop{\operatorname{\rm GL}}}}
\nc{\gr}{{\mathop{\operatorname{\rm gr}}}}
\nc{\Id}{{\mathop{\operatorname{\rm Id}}}}
\nc{\de}{{\mathop{\operatorname{\rm def}}}}
\nc{\length}{{\mathop{\operatorname{\rm length}}}}
\nc{\supp}{{\mathop{\operatorname{\rm supp}}}}

\nc{\Cliff}{{\mathsf{Cliff}}}
\nc{\Fl}{\on{Fl}}
\nc{\Fib}{{\mathsf{Fib}}}
\nc{\Coh}{{\on{Coh}}}
\nc{\FCoh}{{\mathsf{FCoh}}}

\nc{\reg}{{\text{\rm reg}}}

\nc{\cplus}{{\mathbf{C}_+}}
\nc{\cminus}{{\mathbf{C}_-}}
\nc{\cthree}{{\mathbf{C}_*}}
\nc{\Qbar}{{\bar{Q}}}
\nc\Eis{\on{Eis}}
\nc\Eisb{\ol\Eis{}}
\nc\wh{\widehat}
\nc{\Def}{\on{Def_{\check{\fb}}(E)}}
\nc{\barZ}{\overline{Z}{}}
\nc{\barbarZ}{\overline{\barZ}{}}
\nc{\barpi}{\overline\pi}
\nc{\barbarpi}{\overline\barpi}
\nc{\barpip}{\overline\pi{}^+}
\nc{\barpim}{\overline\pi{}^-}

\nc{\fq}{\mathfrak q}

\nc{\fqb}{\ol{\fq}{}}
\nc{\fpb}{\ol{\fp}{}}

\nc{\hattimes}{\wh\otimes}

\nc{\bh}{{\bar{h}}}
\nc{\bOmega}{{\overline{\Omega(\check \fn)}}}

\nc{\seq}[1]{\stackrel{#1}{\sim}}

%
%
%
%

\nc{\cT}{{\check{T}}}
\nc{\cG}{{\check{G}}}
\nc{\cP}{{\check{P}}}
\nc{\cM}{{\check{M}}}
\nc{\cB}{{\check{B}}}

\nc{\ct}{{\check{\mathfrak t}}}
\nc{\cg}{{\check{\fg}}}
\nc{\cb}{{\check{\fb}}}
\nc{\cn}{{\check{\fn}}}

\nc{\cLambda}{{\check\Lambda}}

\nc{\cla}{{\check\lambda}}
\nc{\cmu}{{\check\mu}}
\nc{\cnu}{{\check\nu}}
\nc{\ceta}{{\check\eta}}

\nc{\Dmod}{\on{D-mod}}

\nc{\DefbE}{{\on{Def}_{\cB}(E_\cT)}}

\nc{\imathb}{{\ol{\imath}}}

\nc{\Vect}{\on{Vect}}

\nc{\psId}{\on{Ps-Id}}
\nc{\sotimes}{\overset{!}\otimes}
\nc{\LocSys}{\on{LocSys}}
\nc{\QCoh}{\on{QCoh}}
\nc{\IndCoh}{\on{IndCoh}}

\begin{document}

\title[A ``strange" functional equation for Eisenstein series]{A ``strange" functional equation for Eisenstein series and 
miraculous duality on the moduli stack of bundles}

\author{D.~Gaitsgory}

\date{\today}

\begin{abstract}
We show that the failure of the usual Verdier duality on $\Bun_G$ leads to a new duality
functor on the category of D-modules, and we study its relation to the operation of Eisenstein
series.
\end{abstract}

\maketitle

\section*{Introduction}

\ssec{Context for the present work}

\sssec{}

This paper arose in the process of developing what V.~Drinfeld calls 
\emph{the geometric theory of automorphic functions}. I.e., we study \emph{sheaves} 
on the moduli stack $\Bun_G$ of principal $G$-bundles on a curve $X$. Here and elsewhere 
in the paper, we fix an algebraically closed ground field $k$, and we let $G$ be a reductive group and $X$
a smooth and complete curve over $k$.  

\medskip

In the bulk of the paper we will take $k$ to be of characteristic $0$, and by a ``sheaf" we will understand
an object of the derived category of D-modules. However, with appropriate modifications, 
our results apply also to $\ell$-adic sheaves, or any other reasonable sheaf-theoretic situation. 

\medskip

Much of the motivation for the study of sheaves on $\Bun_G$ comes from the so-called \emph{geometric Langlands program}. 
In line with this, 
the main results of this paper have a transparent meaning in terms of this program, see \secref{ss:geom lang}. 
However, one can also view them from the perspective of the classical theory of automorphic functions 
(rather, we will see phenomena that so far have not been studied classically). 

\sssec{Constant term and Eisenstein series functors}

To explain what is done in this paper we will first recall the main result of \cite{DrGa3}. 

\medskip

Let $P\subset G$ be a parabolic
subgroup with Levi quotient $M$. The diagram of groups 
$$G\hookleftarrow P\twoheadrightarrow M$$
gives rise to a diagram of stacks
\begin{equation} \label{e:basic diagram intro intro}
\xy
(-15,0)*+{\Bun_G}="X";
(15,0)*+{\Bun_M.}="Y";
(0,15)*+{\Bun_P}="Z";
{\ar@{->}_{\sfp} "Z";"X"};
{\ar@{->}^{\sfq} "Z";"Y"};
\endxy
\end{equation}

Using this diagram as ``pull-push", one can write down several functors connecting the categories
of D-modules on $\Bun_G$ and $\Bun_M$, respectively. By analogy with the classical theory of
automorphic functions, we call the functors going from $\Bun_M$ to $\Bun_G$ ``Eisenstein series",
and the functors going from $\Bun_G$ to $\Bun_M$ ``constant term".  

\medskip

Namely, we have
$$\Eis_!:=\sfp_!\circ \sfq^*, \quad \Dmod(\Bun_M)\to \Dmod(\Bun_G),$$
$$\Eis_*:=\sfp_*\circ \sfq^!, \quad \Dmod(\Bun_M)\to \Dmod(\Bun_G),$$
$$\on{CT}_!:=\sfq_!\circ \sfp^*, \quad \Dmod(\Bun_G)\to \Dmod(\Bun_M),$$
$$\on{CT}_*:=\sfq_*\circ \sfp^!, \quad \Dmod(\Bun_G)\to \Dmod(\Bun_M).$$  

Note that unlike the classical theory, where there is only one pull-back and one push-forward
for functions, for sheaves there are two options: $!$ and $*$, for both pull-back and push-forward. 
The interaction of these two options is one way to look at what this paper is about.

\medskip

Among the above functors, there are some obvious adjoint pairs: $\Eis_!$ is the left adjoint of
$\on{CT}_*$, and $\on{CT}_!$ is the left adjoint of $\Eis_*$. 

\medskip

In addition to this, the following, perhaps a little unexpected, result was proved in \cite{DrGa3}: 

\begin{thm} \label{t:CT}
The functors $\on{CT}_!$ and $\on{CT}^-_*$ are canonically isomorphic.
\end{thm}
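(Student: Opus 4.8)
The plan is to produce the isomorphism $\on{CT}_! \cong \on{CT}^-_*$ by exhibiting a correspondence that realizes both sides and comparing them via base change and a contraction argument. Write $\sfp\colon \Bun_P\to\Bun_G$, $\sfq\colon\Bun_P\to\Bun_M$, and $\sfp^-\colon\Bun_{P^-}\to\Bun_G$, $\sfq^-\colon\Bun_{P^-}\to\Bun_M$ for the data attached to the opposite parabolic. First I would form the fiber product
\[
Z:=\Bun_P\underset{\Bun_G}{\times}\Bun_{P^-},
\]
with its two projections $\mathi_P\colon Z\to\Bun_P$ and $\mathi_{P^-}\colon Z\to\Bun_{P^-}$. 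A $k$-point of $Z$ is a $G$-bundle equipped with a reduction to $P$ and a reduction to $P^-$; the locus where the two reductions are "transversal" along the generic point of $X$ (i.e.\ induce the same $M$-reduction) is an open substack $\overset{o}{Z}\hookrightarrow Z$, and the map $\overset{o}{Z}\to\Bun_M$ (via either parabolic) is the key geometric object. The strategy is: (1) by base change along \eqref{e:basic diagram intro intro} and its $P^-$-analogue, both $\on{CT}_!=\sfq_!\sfp^*$ precomposed with $\Eis^-_*=\sfp^-_*(\sfq^-)^!$-type data and, dually, the comparison $\on{CT}_! \to \on{CT}^-_*$ are governed by pull-push through $Z$; (2) show that the correction between $\mathi_P^*$ and $\mathi_{P^-}^!$ (i.e.\ the discrepancy between $!$ and $*$, which is exactly the theme flagged in the introduction) is trivial on the relevant locus because the map $\overset{o}{Z}\to\Bun_P\times_{\Bun_M}\Bun_{P^-}$ (or the appropriate contraction thereof) is an affine fibration, hence introduces no cohomological shift or twist after the $!$- and $*$-pushforwards are taken.

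Concretely, the comparison map $\on{CT}_!\to\on{CT}^-_*$ should be constructed as follows. There is a natural transformation coming from the open embedding $\overset{o}{Z}\hookrightarrow Z$ together with the adjunctions $(\sfp^!, \sfp_*)$ and $(\sfp_!,\sfp^*)$: starting from $\sfq_!\sfp^*\F$, use the unit/counit for the correspondence to map into $\sfq^-_*(\sfp^-)^!\F$, factoring through sections/cosections supported on $Z$. I would then prove this map is an isomorphism by checking it "stratum by stratum" on $\Bun_M$, or more efficiently by a \emph{contraction principle}: the stack $Z$ carries a $\mathbb{G}_m$-action (coming from the cocharacter defining $P$, acting on the relative position of the two reductions) whose fixed locus is $\Bun_M$ (diagonally) and whose attracting/repelling loci are $\Bun_P$ and $\Bun_{P^-}$. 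Then Braden's hyperbolic localization theorem identifies $(\text{attracting})_!\circ(\text{pullback})$ with $(\text{repelling})_*\circ(\text{pullback})$, and unwinding the definitions this is precisely the asserted isomorphism $\on{CT}_!\cong\on{CT}^-_*$. This is, I expect, the cleanest route: it reduces the theorem to a single invocation of hyperbolic localization plus the identification of the combinatorial data (attractor $=\Bun_P$, repeller $=\Bun_{P^-}$, fixed points $=\Bun_M$), which is essentially the content of the diagram \eqref{e:basic diagram intro intro} and its mirror.

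The main obstacle is \emph{non-quasi-compactness and the resulting convergence issues}. The stack $\Bun_G$, and hence $Z$, is not quasi-compact, the $\mathbb{G}_m$-action does not have the nice properties (contraction to a quasi-compact fixed locus) that the classical Braden theorem requires, and — crucially, as the introduction signals — the six functors on $\Bun_G$ are subtle precisely because Verdier duality fails. So one cannot simply cite hyperbolic localization off the shelf; one must either (a) stratify $\Bun_M$ by quasi-compact opens and show all constructions are compatible with the filtration, proving the needed finiteness/convergence of the $!$- and $*$-pushforwards along $\sfq,\sfq^-$ by hand (using that the fibers of $\sfq$, while infinite-dimensional, are well-behaved "generically on $X$"), or (b) develop a version of the contraction principle adapted to this setting, controlling the failure of $\sfp_!\to\sfp_*$ being an isomorphism. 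I would expect to handle this by the strategy of \cite{DrGa3}: work with the \emph{compactified} spaces $\BunPb$ and $\BunPbm$, where $\sfp$ extends to a proper (or at least nicer) map $\bar\sfp$, so that $\bar\sfp_!\cong\bar\sfp_*$, prove the statement there, and then descend to the open $\Bun_P\subset\BunPb$ using that the complement contributes nothing to the composite with $\sfq$. Verifying this last "nothing to the composite" claim — i.e.\ that the boundary strata of $\BunPb$ do not affect $\on{CT}_!$ or $\on{CT}^-_*$ — is the technical heart of the argument and is where the specific geometry of Drinfeld's compactification enters.
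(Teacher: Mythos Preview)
The paper does not prove this theorem. It is quoted in the introduction as ``the following, perhaps a little unexpected, result was proved in \cite{DrGa3}'' and restated later as \thmref{t:weird adj} with the attribution ``The following is the main result of \cite{DrGa3}.'' The present paper takes the isomorphism $\on{CT}_!\simeq\on{CT}^-_*$ as a black box and builds the strange functional equation and the miraculous duality on top of it; there is no proof here to compare against.

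That said, your sketch is essentially the outline of the argument in \cite{DrGa3}. The construction of the natural transformation does go through the fiber product $Z=\Bun_P\times_{\Bun_G}\Bun_{P^-}$ and the open embedding $\Bun_M\hookrightarrow Z$ (this diagram in fact reappears in the present paper, in the proof of \lemref{l:two CT}). The proof that it is an isomorphism is indeed a Braden-type hyperbolic localization: one takes a regular dominant cocharacter $\BG_m\to Z(M)$, lets it act on $\Bun_G$, identifies the fixed locus with $\Bun_M$, the attractor with $\Bun_P$, and the repeller with $\Bun_{P^-}$, and invokes a contraction principle. Your diagnosis of the obstacle---that classical Braden does not apply directly because of non-quasi-compactness and the attendant failure of naive Verdier duality---is also correct; the bulk of \cite{DrGa3} is devoted to establishing the contraction principle in the generality needed here.

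One correction: the route through Drinfeld compactifications $\wt\Bun_P$ that you propose at the end is \emph{not} how \cite{DrGa3} handles this. Compactifications are used in the present paper (e.g.\ in the proof of \propref{p:via BunP tilde}) and elsewhere, but the isomorphism $\on{CT}_!\simeq\on{CT}^-_*$ itself is obtained purely from the generalized hyperbolic localization, without passing to $\wt\Bun_P$. The boundary strata never enter.
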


In the statement of the theorem the superscript ``$-$" means 
the constant term functor taken with respect to the \emph{opposite} parabolic $P^-$  (note that
the Levi quotients of $P$ and $P^-$ are canonically identified). 

\medskip

Our goal in the present paper is to understand what implication the above-mentioned
isomorphism 
$$\on{CT}_!\simeq \on{CT}^-_*$$
has for the Eisenstein series functors $\Eis_!$ and $\Eis_*$.  The conclusion will be
what we will call a ``strange" functional equation \eqref{e:strange intro}, explained below. 

\medskip

In order to explain what the ``strange" functional equation does, we will need to go a little
deeper into what one may call the ``functional-analytic" aspects of the study of $\Bun_G$.

\sssec{Verdier duality on stacks}  \label{sss:dual first}

The starting point for the ``analytic" issues that we will be dealing with is that the stack $\Bun_G$
is \emph{not quasi-compact} (this is parallel to the fact that in the classical theory, the automorphic space is 
not compact, leading to a host of interesting analytic phenomena). The particular phenomenon
that we will focus on is the absence of the usual Verdier duality functor, and what replaces it. 

\medskip

First off, it is well-known (see, e.g., \cite[Sect. 2]{DrGa2}) that if $\CY$ is an arbitrary 
reasonable\footnote{The word ``reasonable" here does not have a technical meaning; the technical term
is ``QCA", which means that the automorphism group of any field-valued point is affine.} 
quasi-compact algebraic stack, then the category $\Dmod(\CY)$ is compactly generated and 
naturally self-dual.  

\medskip

Perhaps, the shortest way to understand the meaning of self-duality is that the subcategory
$\Dmod(\CY)^c\subset \Dmod(\CY)$ consisting of compact objects carries a canonically defined
contravariant self-equivalence, called Verdier duality. A more flexible 
way of interpreting the same phenomenon is an equivalence, denoted $\bD_\CY$, between $\Dmod(\CY)$
and its \emph{dual} category $\Dmod(\CY)^\vee$ (we refer the reader to \cite[Sect. 1]{DrGa1}, where the basics
of the notion of duality for DG categories are reviewed). 

\medskip

Let us now remove the assumption that $\CY$ be quasi-compact. Then there is another geometric condition,
called ``truncatability" that ensures that $\Dmod(\CY)$ is compactly generated (see \cite[Definition 4.1.1]{DrGa2},
where this notion is introduced). We remark here that the goal of the paper \cite{DrGa2} was to show
that the stack $\Bun_G$ is truncatable. The reader who is not familiar with this notion is advised to ignore
it on the first pass. 

\medskip

Thus, let us assume that $\CY$ is truncatable. However, there still is no obvious replacement for 
Verdier duality: extending the quasi-compact case, one can define a functor 
$$(\Dmod(\CY)^c)^{\on{op}}\to \Dmod(\CY),$$
but it no longer lands in $\Dmod(\CY)^c$ (unless $\CY$ is a disjoint union of quasi-compact stacks).
In the language of dual categories, we have a functor
$$\on{Ps-Id}_{\CY,\on{naive}}:\Dmod(\CY)^\vee\to \Dmod(\CY),$$
but it is no longer an equivalence.\footnote{The category $\Dmod(\CY)^\vee$ 
and the functor $\on{Ps-Id}_{\CY,\on{naive}}$ will be described explicitly in \secref{ss:dual}.}

\medskip
 
In particular, the functor $\on{Ps-Id}_{\Bun_G,\on{naive}}$ is \emph{not} an equivalence, unless $G$ is a torus.

\sssec{The pseudo-identity functor}  \label{sss:pseudo-id intro}

To potentially remedy this, V. ~Drinfeld suggested another functor, denoted
$$\on{Ps-Id}_{\CY,!}: \Dmod(\CY)^\vee\to \Dmod(\CY),$$
see \cite[Sect. 4.4.8]{DrGa2} or \secref{ss:pseudo} of the present paper. 

\medskip

Now, it is not true that for all truncatable stacks $\CY$, the functor $\on{Ps-Id}_{\CY,!}$ is an equivalence. 
In \cite{DrGa2} the stacks for which it is an equivalence are called ``miraculous". 

\medskip

We can now formulate the main result of this paper (conjectured by V.~Drinfeld): 

\begin{thm} \label{t:miraculous preview}
The stack $\Bun_G$ is miraculous.
\end{thm}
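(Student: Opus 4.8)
The plan is to deduce \thmref{t:miraculous preview} from \thmref{t:CT}: first convert the isomorphism $\on{CT}_!\simeq \on{CT}^-_*$ into a functional equation for the Eisenstein functors, and then run an induction on the semisimple rank of $G$.

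\emph{Step 1: the ``strange'' functional equation.} The functors $\on{CT}_!$ and $\on{CT}^-_*$ are continuous and admit continuous right adjoints ($\Eis_*$ and $\Eis^-_!$ respectively), hence are dualizable; I would dualize the isomorphism of \thmref{t:CT} and compute the duals of the two constant-term functors. Under the canonical pairings, $!$-pushforward is dual to $*$-pullback and $*$-pushforward to $!$-pullback, so $(\on{CT}_!)^\vee$ and $(\on{CT}^-_*)^\vee$ are identified with the Eisenstein functors $\Eis_!$ and $\Eis^-_*$ — but now as functors between the \emph{dual} categories $\Dmod(\Bun_M)^\vee\to \Dmod(\Bun_G)^\vee$. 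Transporting the resulting isomorphism along the functors relating $\Dmod(\Bun_M)$ and $\Dmod(\Bun_G)$ with their duals produces a canonical isomorphism of the form
$$\Eis_!\circ \on{Ps-Id}_{\Bun_M,!}\;\simeq\;\on{Ps-Id}_{\Bun_G,!}\circ \Eis^-_*$$
(equivalently, one relating $\Eis_*$ and $\Eis^-_!$); the exact shape, including a cohomological shift, a twist by a graded line, and which side the $\on{Ps-Id}$-functors sit on, is read off from how the $!$- and $*$-push-pull operations dualize. Conceptually, \thmref{t:CT} is precisely the ``intertwining-operator'' identity that forces $\Eis_!$ and $\Eis_*$ to differ by $\on{Ps-Id}$. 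A technical cost here is that $\Bun_P$ is itself not quasi-compact, so the duality of $\sfp_!,\sfp^*,\sfq_!,\sfq^!$ has to be set up with care; one handles this by reducing the behavior of $\on{Ps-Id}_{\Bun_P,!}$ to that of $\on{Ps-Id}_{\Bun_M,!}$ using that $\sfq\colon\Bun_P\to\Bun_M$ has cohomologically contractible fibers.

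\emph{Step 2: induction and base case.} Induct on the semisimple rank of $G$. If $G$ is a torus, then $\Bun_G$ is a disjoint union of quasi-compact stacks (each connected component of $\Bun_T$ is a $T$-gerbe over a torsor under a power of the Jacobian of $X$), and as recalled in \secref{sss:dual first} the naive pseudo-identity functor is then already an equivalence; hence $\Bun_T$ is miraculous. For the inductive step, assume $\Bun_M$ is miraculous — so $\on{Ps-Id}_{\Bun_M,!}$ is an equivalence — for the Levi $M$ of every proper parabolic $P\subsetneq G$.

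\emph{Step 3: reduction to the cuspidal and Eisenstein parts, and conclusion.} Use the decomposition of $\Dmod(\Bun_G)$ generated by the cuspidal subcategory $\Dmod(\Bun_G)_{\on{cusp}}$ together with the essential images of the Eisenstein functors from all proper parabolics (by \thmref{t:CT} it is immaterial whether one uses $\Eis_!$ or $\Eis_*$), and check that $\on{Ps-Id}_{\Bun_G,!}$ is compatible with the corresponding decomposition of the dual category $\Dmod(\Bun_G)^\vee$. On the cuspidal part, the finiteness properties of cuspidal D-modules (boundedness along the truncation filtration of $\Bun_G$) make the naive and the $!$-pseudo-identity functors coincide and restrict to an equivalence — the situation there is effectively that of a quasi-compact stack. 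On the Eisenstein part, apply Step 1: $\on{Ps-Id}_{\Bun_G,!}$ carries the image of $\Eis^-_*$ (from a proper $P$) to the image of $\Eis_!$ precomposed with $\on{Ps-Id}_{\Bun_M,!}$, which is an equivalence by the inductive hypothesis; since such images, over all proper $P$, generate the Eisenstein part, $\on{Ps-Id}_{\Bun_G,!}$ restricts to an equivalence there as well. Assembling the two blocks shows $\on{Ps-Id}_{\Bun_G,!}$ is an equivalence, i.e. $\Bun_G$ is miraculous.

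\emph{Main obstacle.} I expect the crux to lie in Step 3 rather than in the formal manipulations: namely, (i) showing that $\on{Ps-Id}_{\Bun_G,!}$ genuinely respects the cuspidal/Eisenstein decomposition of the \emph{dual} category — that is, controlling how Verdier-type duality interacts with the non-quasi-compactness of $\Bun_G$, which is concentrated exactly at the boundary where Eisenstein series are supported; and (ii) pinning down the precise finiteness statement for cuspidal objects that makes the pseudo-identity functor an equivalence on $\Dmod(\Bun_G)_{\on{cusp}}$. A subsidiary difficulty, already flagged in Step 1, is the rigorous dualization of the pull-push along $\sfp$ and $\sfq$ on the non-quasi-compact stack $\Bun_P$.
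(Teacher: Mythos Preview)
Your overall architecture matches the paper's: induction on semisimple rank, the strange functional equation as the inductive engine, and a separate treatment of the cuspidal part. But two of your steps are genuine gaps, not just technicalities.

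\textbf{Full faithfulness on the Eisenstein part.} From the functional equation $\Eis_!\circ \psId_{\Bun_M,!}\simeq \psId_{\Bun_G,!}\circ \Eis^-_{\on{co},*}$ and the inductive hypothesis, you correctly deduce that $\psId_{\Bun_G,!}$ carries the Eisenstein subcategory of $\Dmod(\Bun_G)_{\on{co}}$ \emph{onto} $\Dmod(\Bun_G)_{\Eis}$. But ``the images generate, hence $\psId_{\Bun_G,!}$ restricts to an equivalence'' is a non sequitur: essential surjectivity on a generating class does not give full faithfulness. To prove that $\Hom(\Eis_{\on{co},*}(\CF_M),\CF)\to \Hom(\psId_{\Bun_G,!}\Eis_{\on{co},*}(\CF_M),\psId_{\Bun_G,!}(\CF))$ is an isomorphism, the paper passes to right adjoints in the functional equation for $P^-$, obtaining a natural transformation $\psId_{\Bun_M,!}\circ \on{CT}_{\on{co},?}\to \on{CT}^-_*\circ \psId_{\Bun_G,!}$, and then must show this \emph{agrees} with the isomorphism obtained by dualizing the functional equation for $P$. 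This compatibility (\propref{p:two maps}) is not formal: its proof requires a geometric input about the open embedding $\Bun_M\hookrightarrow \Bun_{P^-}\times_{\Bun_G}\Bun_P$ (\lemref{l:two CT}). Your outline contains no analogue of this step. The same issue affects the cross-terms: ``assembling the two blocks'' requires controlling $\Hom$ from cuspidal to arbitrary objects, which the paper handles separately (\propref{p:Hom out of cusp}).

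\textbf{The cuspidal part.} You assert that finiteness of cuspidal objects makes $\psId_{\Bun_G,!}$ and $\psId_{\Bun_G,\on{naive}}$ coincide there. Support on a quasi-compact open is not enough by itself: one needs to know that the \emph{difference} $\psId_{\Bun_G,\on{diff}}$ vanishes on cuspidal objects. The paper's mechanism for this is \propref{p:diff}, which exhibits $\psId_{\Bun_G,\on{diff}}$ as filtered by functors factoring through $\on{CT}^\mu_{\on{co},*}$ and $\Eis^{\mu',-}_*$ for proper parabolics; this is a substantial geometric result (proved via a Vinberg-semigroup compactification of the diagonal) that your sketch does not supply.

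A smaller point: your derivation of the functional equation in Step~1 by ``dualizing $\on{CT}_!\simeq\on{CT}^-_*$ and transporting along $\psId$'' is circular as stated, since $\psId$ is not yet known to intertwine anything. The paper instead computes both sides directly as kernels in $\Dmod(\Bun_M\times\Bun_G)$, identifying each with $(\sfq\times\sfp)_!(k_{\Bun_P})$; the isomorphism $\on{CT}^-_*\simeq\on{CT}_!$ enters only to rewrite $(\Eis^-_{\on{co},*})^\vee$ in that computation.
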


We repeat that the above theorem says that the canonically defined functor $\on{Ps-Id}_{\Bun_G,!}$
defines an identification of $\Dmod(\Bun_G)$ and its dual category. Equivalently, it gives rise to
a (non-obvious!) contravariant self-equivalence on $\Dmod(\Bun_G)^c$. 

\sssec{The ``strange" functional equation}

Finally, we can go back and state the ``strange" functional equation, which is in fact an ingredient
in the proof of \thmref{t:miraculous preview}:

\begin{thm} \label{t:strange preview}
We have a canonical isomorphism of functors 
$$\Eis^-_!\circ \on{Ps-Id}_{\Bun_M,!}\simeq \on{Ps-Id}_{\Bun_G,!}\circ (\on{CT}_*)^\vee.$$
\end{thm}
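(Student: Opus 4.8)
The plan is to deduce the ``strange'' functional equation formally from \thmref{t:CT} by manipulating adjoints and duality functors. The starting observation is that $\on{CT}_*$ is the right adjoint of $\Eis_!$ and $\on{CT}_!$ is the left adjoint of $\Eis_*$; passing to dual categories, the dual functor $(\on{CT}_*)^\vee:\Dmod(\Bun_M)^\vee\to\Dmod(\Bun_G)^\vee$ is then computed as the dual of a right adjoint, which is the right adjoint of the dual of $\Eis_!$ — equivalently $(\Eis_!)^\vee$ has $(\on{CT}_*)^\vee$ as its... one must be careful here: dualization is contravariant on $1$-morphisms but preserves the direction, while it swaps left and right adjoints. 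So from the adjunction $(\Eis_!,\on{CT}_*)$ one gets the adjunction $((\on{CT}_*)^\vee,(\Eis_!)^\vee)$ of functors between the dual categories. Similarly, from $(\on{CT}_!,\Eis_*)$ one gets $((\Eis_*)^\vee,(\on{CT}_!)^\vee)$.

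\textbf{Key steps.} First I would recall the explicit description of $\on{Ps-Id}_{\CY,!}$ (from \secref{ss:pseudo}) and, crucially, its compatibility with pull-push functors: the whole point of the $!$-pseudo-identity is that it intertwines $*$-pushforward on one side with $!$-pushforward (i.e. $!$-pullback's adjoint) on the dual side, and likewise $!$-pullback with $*$-pullback, whenever the maps involved are schematic (or at least nice enough). Concretely, for the map $\sfp:\Bun_P\to\Bun_G$ one expects a canonical isomorphism $\sfp_!\circ\on{Ps-Id}_{\Bun_P,?}\simeq \on{Ps-Id}_{\Bun_G,!}\circ(\sfp_*)^\vee$ and dually for $\sfq$; combining the two along $\Bun_P$ and using that $\Bun_P$ — or rather the relevant fibers — behaves well, one assembles the base-change-type isomorphism relating $\Eis_!$ (a $\sfp_!\circ\sfq^*$ composite) precomposed with $\on{Ps-Id}_{\Bun_M,!}$ to $\on{Ps-Id}_{\Bun_G,!}$ postcomposed with $(\Eis_*)^\vee$ (the dual of the $\sfp_*\circ\sfq^!$ composite), i.e. a ``naive'' functional equation for the \emph{same} parabolic. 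Then one feeds in \thmref{t:CT}: since $\on{CT}_!\simeq\on{CT}^-_*$, dualizing gives $(\on{CT}_!)^\vee\simeq(\on{CT}^-_*)^\vee$; but $(\on{CT}_!)^\vee$ is, by the adjunction bookkeeping above, essentially $\Eis_*$ transported to dual categories, while $(\on{CT}^-_*)^\vee$ pairs with $\Eis^-_!$. Chaining the ``naive'' equation with this identification of duals produces exactly $\Eis^-_!\circ\on{Ps-Id}_{\Bun_M,!}\simeq\on{Ps-Id}_{\Bun_G,!}\circ(\on{CT}_*)^\vee$.

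\textbf{Main obstacle.} The hard part will not be the adjunction formalism — that is bookkeeping — but establishing the compatibility of $\on{Ps-Id}_{\CY,!}$ with the pull-push operations defining $\Eis$ and $\on{CT}$. The functor $\on{Ps-Id}_{\CY,!}$ is defined via a kernel supported on the diagonal, and to commute it past $\sfp_!$ and $\sfq^!$ one needs base-change statements for D-modules on the stacks $\Bun_G$, $\Bun_P$, $\Bun_M$, none of which is quasi-compact; so one must control truncatability and the behavior of these functors on the cofiltered/filtered systems of quasi-compact substacks used to define $\on{Ps-Id}_!$. In particular $\sfp:\Bun_P\to\Bun_G$ is not proper, so $\sfp_!$ and $\sfp_*$ genuinely differ, and one must verify that the relevant partial compactification or the contraction principle makes the diagram of kernels commute. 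I expect this to require the geometry of $\ol\Bun_P$ (the relative compactification) together with a clean statement that $\on{Ps-Id}_!$ is ``functorial for schematic quasi-compact maps,'' which is the technical heart to be isolated and proved before the formal deduction goes through. Once that compatibility is in hand, the rest is a diagram chase combining it with \thmref{t:CT}.
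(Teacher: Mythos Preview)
Your instinct that the proof should be a ``formal manipulation from \thmref{t:CT}'' is correct, but the specific route you sketch has a type error and anticipates the wrong obstacle.

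\medskip

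\textbf{The type error.} You propose an intermediate ``naive'' functional equation for the \emph{same} parabolic, of the form
\[
\Eis_!\circ \psId_{\Bun_M,!}\ \simeq\ \psId_{\Bun_G,!}\circ (\Eis_*)^\vee.
\]
But $(\Eis_*)^\vee$ is a functor $\Dmod(\Bun_G)^\vee\to\Dmod(\Bun_M)^\vee$; it goes the wrong way to be precomposed with $\psId_{\Bun_G,!}:\Dmod(\Bun_G)^\vee\to\Dmod(\Bun_G)$. The functor that actually sits in that slot is $(\on{CT}_*)^\vee=\Eis_{\on{co},*}$ (or, for the opposite parabolic, $(\on{CT}^-_*)^\vee=\Eis^-_{\on{co},*}$), and there is no ``same-parabolic'' version to prove first: the opposite parabolic appears from the outset, precisely because $(\Eis^-_{\on{co},*})^\vee=\on{CT}^-_*$ and \thmref{t:CT} identifies this with $\on{CT}_!$ for $P$.

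\medskip

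\textbf{What the paper actually does.} There is no intermediate factorization through a pseudo-identity on $\Bun_P$, no need for the compactification $\ol\Bun_P$ or $\wt\Bun_P$, and no delicate compatibility of $\psId_!$ with pull-push to be established. Instead one works directly with kernels. A continuous functor $\Dmod(\Bun_M)_{\on{co}}\to\Dmod(\Bun_G)$ corresponds to an object of $\Dmod(\Bun_M\times\Bun_G)$, and the proof computes the kernel of each side and shows both are canonically
\[
(\sfq\times\sfp)_!(k_{\Bun_P})\in\Dmod(\Bun_M\times\Bun_G).
\]
For the left-hand side $\Eis_!\circ\psId_{\Bun_M,!}$, the kernel is $(\on{Id}\otimes\Eis_!)$ applied to $(\Delta_{\Bun_M})_!(k_{\Bun_M})$; a single base change along the graph of $\sfq$ gives the answer. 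For the right-hand side $\psId_{\Bun_G,!}\circ\Eis^-_{\on{co},*}$, the kernel is $\bigl((\Eis^-_{\on{co},*})^\vee\otimes\on{Id}\bigr)$ applied to $(\Delta_{\Bun_G})_!(k_{\Bun_G})$; now $(\Eis^-_{\on{co},*})^\vee=\on{CT}^-_*$, and \emph{this} is where \thmref{t:CT} enters: it identifies $\on{CT}^-_*$ with $\on{CT}_!$, which on the holonomic object $(\Delta_{\Bun_G})_!(k_{\Bun_G})$ is computed by $\sfq_!\circ\sfp^*$. A base change along the graph of $\sfp$ then yields the same $(\sfq\times\sfp)_!(k_{\Bun_P})$.

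\medskip

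\textbf{Summary.} The technical heart you were bracing for (non-properness of $\sfp$, truncatability bookkeeping, a functoriality statement for $\psId_!$) does not arise. The entire content is two elementary base-change identifications of kernels, with \thmref{t:CT} invoked once to convert $\on{CT}^-_*$ into the $!$-computable $\on{CT}_!$. Rewrite your argument at the level of kernel objects in $\Dmod(\Bun_M\times\Bun_G)$ and the proof becomes a few lines.
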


In the \thmref{t:strange preview}, the functor $(\on{CT}_*)^\vee$ maps
$$\Dmod(\Bun_M)^\vee\to \Dmod(\Bun_G)^\vee$$
and is the \emph{dual} of the functor $\on{CT}_*$. As we shall see in \secref{ss:dual of CT},
the functor $(\on{CT}_*)^\vee$ is a close relative of the functor $\Eis_*$, introduced
earlier. 

\ssec{Motivation from geometric Langlands}  \label{ss:geom lang}

We shall now proceed and describe how the results of this paper fit into the
geometric Langlands program. The contents of this subsection play a motivational role only, 
and the reader not familiar with the objects
discussed below can skip this subsection and proceed to \secref{ss:usual}. 

\sssec{Statement of GLC}

Let us recall the statement of the categorical geometric Langlands conjecture (GLC), according to \cite[Conjecture 10.2.2]{AG}.

\medskip

The left-hand (i.e., geometric) side of GLC is the DG category $\Dmod(\Bun_G)$ of D-modules on the stack $\Bun_G$. 

\medskip

Let $\cG$ denote the Langlands dual group of $G$, and let $\LocSys_\cG$ denote the (derived) stack
of $\cG$-local systems on $X$. The right-hand (i.e., spectral) side of GLC has to do with (quasi)-coherent sheaves on $\LocSys_\cG$.

\medskip

More precisely, In \cite{AG}, a certain modification of the DG category $\QCoh(\LocSys_\cG)$ was introduced; we denote it by
$\IndCoh_{\on{Nilp}_{glob}}(\LocSys_\cG)$.  This category is what appears on the the spectral side of GLC. 

\medskip

Thus, GLC states the existence of an equivalence
\begin{equation} \label{e:GLC}
\BL_G:\Dmod(\Bun_G)\to \IndCoh_{\on{Nilp}_{glob}}(\LocSys_\cG),
\end{equation}
that satisfies a number of properties that (conjecturally) determine $\BL_G$ uniquely. 

\medskip

The property of $\BL_G$, relevant for this paper, is the compatibility of \eqref{e:GLC} with the functor of Eisenstein series, 
see \secref{sss:GLC and Eis} below. 

\sssec{Interaction of GLC with duality}

A feature of the spectral side crucial for this paper is that the Serre duality functor of \cite[Proposition 3.7.2]{AG}
gives rise to an equivalance:
$$\bD^{\on{Serre}}_{\LocSys_\cG}:(\IndCoh_{\on{Nilp}_{glob}}(\LocSys_\cG))^\vee\to
\IndCoh_{\on{Nilp}_{glob}}(\LocSys_\cG).$$
(Here, as in \secref{sss:dual first}, for a compactly generated category $\bC$, we denote by $\bC^\vee$ the dual category.)

\medskip

Hence, if we believe in the existence of an equivalence $\BL_G$ of \eqref{e:GLC}, there should exist an equivalence
\begin{equation} \label{e:self-dual 1}
(\Dmod(\Bun_G))^\vee\simeq \Dmod(\Bun_G).
\end{equation}

Now, the pseudo-identity functor $\on{Ps-Id}_{\Bun_G,!}$ mentioned in \secref{sss:pseudo-id intro} 
and appearing in \thmref{t:miraculous preview} is exactly supposed to perform this role. 
More precisely, we can enhance the statement of GLC by specifying how it is supposed to interact
with duality: 

\begin{conj}  \label{c:Langlands+duality}
The diagram 
\begin{equation} \label{e:Langlands+duality}
\CD
\Dmod(\Bun_G)^\vee  @>{((\BL_G)^\vee)^{-1}}>>   (\IndCoh_{\on{Nilp}_{glob}}(\LocSys_\cG))^\vee  \\
& & @VV{\bD^{\on{Serre}}_{\LocSys_\cG}}V   \\
@V{\psId_{\Bun_G,!}}VV   \IndCoh_{\on{Nilp}_{glob}}(\LocSys_\cG) \\
& &     @VV{\tau}V   \\
\Dmod(\Bun_G)   @>{\BL_G}>> \IndCoh_{\on{Nilp}_{glob}}(\LocSys_\cG)
\endCD
\end{equation} 
commutes up to a cohomological shift, where
$\tau$ denotes the automorphism, induced by the Cartan involution 
of $G$.
\end{conj}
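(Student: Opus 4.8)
The strategy, necessarily conditional on GLC, is to \emph{deduce} \conjref{c:Langlands+duality} from three inputs: (a) the existence of the equivalences $\BL_{G'}$ of \eqref{e:GLC} for $G'=G$ and for all its Levi subgroups, together with a sufficiently robust form of their compatibility with Eisenstein series; (b) the ``strange'' functional equation \thmref{t:strange preview}, which holds unconditionally; and (c) the (known, or at worst conjectural) statement that the compact objects of $\Dmod(\Bun_G)$ are generated by the cuspidal ones together with the essential images of $\Eis_!$ from proper Levi subgroups. Both circuits around \eqref{e:Langlands+duality} are continuous functors $\Dmod(\Bun_G)^\vee\to\IndCoh_{\on{Nilp}_{glob}}(\LocSys_\cG)$; dualizing (c), I would reduce the construction of a natural isomorphism between them to two sub-statements: one after evaluating on the essential image of $(\on{CT}_*)^\vee$ for each proper parabolic $P$ with Levi $M$, and one on the cuspidal block. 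The whole argument then runs by induction on the semisimple rank of $G$, the base case $G=T$ being handled by inspection --- $\Bun_T$ is miraculous by an explicit computation, $\LocSys_\cT$ admits an equally explicit description, and $\tau$ is inversion $t\mapsto t^{-1}$.

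\medskip

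For the Eisenstein step I would precompose \eqref{e:Langlands+duality} with $(\on{CT}_*)^\vee\colon\Dmod(\Bun_M)^\vee\to\Dmod(\Bun_G)^\vee$. By \thmref{t:strange preview}, the composite ``left vertical, then bottom'' then equals $\BL_G\circ\Eis^-_!\circ\psId_{\Bun_M,!}$. Invoking input (a), I would transport both circuits to the spectral side: $\BL_G\circ\Eis^-_!$ turns into spectral parabolic induction for $\cP^-$ composed with $\BL_M$, and $((\BL_G)^\vee)^{-1}\circ(\on{CT}_*)^\vee$ turns into the dual of a spectral constant-term functor for $\cP$ composed with $((\BL_M)^\vee)^{-1}$ --- the spectral incarnation of $(\on{CT}_*)^\vee$ being the content of \secref{ss:dual of CT}. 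After cancelling the common factor $((\BL_M)^\vee)^{-1}$, the desired commutativity reduces to a purely spectral identity: $\tau\circ\bD^{\on{Serre}}_{\LocSys_\cG}$ carries the dual of spectral constant-term for $\cP$ to spectral Eisenstein for $\cP^-$, after precomposition with $\tau\circ\bD^{\on{Serre}}_{\LocSys_\cM}$ on the $\cM$-side. This is the spectral mirror of \thmref{t:CT}: just as $\on{CT}_!\simeq\on{CT}^-_*$ arises from the relative dualizing complex of $\sfp\colon\Bun_P\to\Bun_G$, the switch $\cP\rightsquigarrow\cP^-$ and the Cartan twist $\tau$ here come from the relative dualizing complex of $\LocSys_\cP\to\LocSys_\cG$; I expect this to be checkable directly, or reducible along $\cM$ to the abelian case already treated. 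The inductive hypothesis applied to $M$ then makes the $\Bun_M$-end of the diagram commute, which closes the $P$-stratum.

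\medskip

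For the cuspidal block $\Dmod(\Bun_G)_{\mathrm{cusp}}$ the plan has three moves. First, the restriction of $\psId_{\Bun_G,!}$ there agrees, up to a shift by $\dim\Bun_G$, with the naive functor $\psId_{\Bun_G,\mathrm{naive}}$, since by the construction of $\psId_!$ in \cite[Sect. 4.4.8]{DrGa2} the difference between the two is assembled from constant-term and Eisenstein operators for proper parabolics, all of which annihilate cuspidal objects. Second, on the spectral side $\Dmod(\Bun_G)_{\mathrm{cusp}}$ should land in the part of $\IndCoh_{\on{Nilp}_{glob}}(\LocSys_\cG)$ supported over the open locus $\LocSys_\cG^{\mathrm{irr}}$ of irreducible local systems, where $\on{Nilp}_{glob}$ reduces to the zero section, so that $\IndCoh_{\on{Nilp}_{glob}}$ coincides with $\QCoh$ there and $\bD^{\on{Serre}}$ is ordinary Grothendieck--Serre duality. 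Third, under $\BL_G$ the naive Verdier duality on cuspidal objects should match $\CF\mapsto\tau^*(\CF^\vee)$, up to shift --- the presence of $\tau$ being precisely the statement that the Verdier dual of a Hecke eigensheaf with parameter $E$ is the eigensheaf with the Cartan-involuted parameter. Matching the two shifts against the virtual dimension of $\LocSys_\cG$ produces the cohomological shift allowed in \conjref{c:Langlands+duality}.

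\medskip

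The main obstacle is not the diagram-chase sketched above but the depth of its hypotheses. One needs GLC \emph{and} a form of its compatibility with Eisenstein series strong enough to govern the functor $(\on{CT}_*)^\vee$ --- whose precise spectral avatar must first be identified, which is exactly what \secref{ss:dual of CT} is for --- as well as the Eisenstein-generation input (c); and the cuspidal step rests on identifying $\Dmod(\Bun_G)_{\mathrm{cusp}}$ with the sheaves supported over $\LocSys_\cG^{\mathrm{irr}}$ and on checking cleanly that $\psId_!$ is naive duality there up to shift. For these reasons the argument above should be read as establishing the \emph{consistency} of \conjref{c:Langlands+duality} with GLC and with \thmref{t:strange preview}, rather than as an unconditional proof: the latter is out of reach without GLC --- which is the whole point of the paper's design, since \thmref{t:strange preview} and \thmref{t:miraculous preview} are proved directly and it is only their interaction with the still-conjectural $\BL_G$ that \conjref{c:Langlands+duality} records.
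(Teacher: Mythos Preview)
The paper does not prove \conjref{c:Langlands+duality}; it is stated as a conjecture and explicitly flagged as motivational (``The contents of this subsection play a motivational role only''). There is therefore no proof in the paper to compare your proposal against. What the paper actually does in \secref{sss:GLC and Eis} and the discussion following it is the \emph{reverse} implication: assuming \conjref{c:Langlands+duality} together with \conjref{c:Langlands+Eisenstein}, it derives the commutative diagram \eqref{e:strange intro}, i.e.\ the strange functional equation. The unconditional content of the paper is then to prove \thmref{t:strange preview} and \thmref{t:miraculous preview} directly, without any appeal to $\BL_G$.

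Your proposal runs the logic in the opposite direction --- using the (now proved) strange functional equation plus GLC plus Eisenstein compatibility plus generation to recover \conjref{c:Langlands+duality} --- and you correctly acknowledge at the end that this is a consistency check rather than a proof. That is the right assessment. A couple of small corrections: the shift between $\psId_{\Bun_G,!}$ and $\psId_{\Bun_G,\on{naive}}$ on cuspidal objects is $-2\dim(\Bun_G)-\dim(Z_G)$, not $\dim(\Bun_G)$ (see \eqref{e:diff} and \corref{c:! and * on cusp}); and the claim that the difference is ``assembled from constant-term and Eisenstein operators'' is exactly the content of \propref{p:diff}, whose proof the paper defers to \cite{Sch}, so even your cuspidal step is not self-contained within the present paper.
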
 

\begin{rem}
Let us comment on the presence of the Cartan involution in \conjref{c:Langlands+duality}.
In fact, it can be seen already when $G$ is a torus $T$, in which case $\tau$ is the 
inversion automorphism. 

\medskip

Indeed, we let $\BL_T$ be the Fourier-Mukai equivalence, and 
\conjref{c:Langlands+duality} is known to hold.

\end{rem} 

\sssec{Interaction of GLC with Eisenstein series}   \label{sss:GLC and Eis}

Let us recall (following \cite[Conjecture 12.2.9]{AG} or \cite[Sect. 6.4.5]{Ga1}) how the equivalence $\BL_G$ is supposed to be 
compatible with the functor(s) of Eisenstein series. 
 
\medskip

For a (standard) parabolic $P\subset G$, let $\cP$ be the corresponding parabolic in $\cG$. Consider
the diagram
\begin{equation} \label{e:basic diagram spec}
\xy
(-15,0)*+{\LocSys_\cG}="X";
(15,0)*+{\LocSys_\cM.}="Y";
(0,15)*+{\LocSys_\cP}="Z";
{\ar@{->}_{\sfp_{\on{spec}}} "Z";"X"};
{\ar@{->}^{\sfq_{\on{spec}}} "Z";"Y"};
\endxy
\end{equation}

We define the functors of spectral Eisenstein series and constant term
$$\Eis_{\on{spec}}:\IndCoh(\LocSys_\cM)\to \IndCoh(\LocSys_\cG), \quad 
\Eis_{\on{spec}}:=(\sfp_{\on{spec}})_*\circ (\sfq_{\on{spec}})^*,$$
$$\on{CT}_{\on{spec}}:\IndCoh(\LocSys_\cG)\to \IndCoh(\LocSys_\cM), 
\quad \on{CT}_{\on{spec}}:=(\sfq_{\on{spec}})_*\circ (\sfp_{\on{spec}})^!.$$
see \cite[Sect. 12.2.1]{AG} for more details. 
The functors $(\Eis_{\on{spec}},\on{CT}_{\on{spec}})$ form an adjoint pair. 

\begin{rem} \label{r:Gore}
In \cite[Conjecture 12.2.9]{AG} a slightly different version of the functor $\Eis_{\on{spec}}$ is given, where instead
of the functor $(\sfq_{\on{spec}})^*$ we use $(\sfq_{\on{spec}})^!$. The difference between these two functors
is given by tensoring by a graded line bundle on $\LocSys_\cM$; this is due to the fact that the morphism $\sfq_{\on{spec}}$
is \emph{Gorenstein}. This difference will be immaterial for the purposes of this paper.
\end{rem}

\medskip

The compatibility of the geometric Langlands equivalence of \eqref{e:GLC}
with Eisenstein series reads (see \cite[Conjecture 12.2.9]{AG}):

\begin{conj} \label{c:Langlands+Eisenstein}
The diagram
\begin{equation} \label{e:Langlands+Eisenstein}
\CD
\Dmod(\Bun_G)   @>{\BL_G}>> \IndCoh_{\on{Nilp}_{glob}}(\LocSys_\cG) \\
@A{\Eis_!}AA    @AA{\Eis_{\on{spec}}}A   \\
\Dmod(\Bun_M)   @>{\BL_M}>> \IndCoh_{\on{Nilp}_{glob}}(\LocSys_\cM)
\endCD
\end{equation}
commutes up to an automorphism of $\IndCoh_{\on{Nilp}_{glob}}(\LocSys_\cM)$,
given by tensoring with a certain canonically defined graded line bundle on $\LocSys_\cM$.
\end{conj}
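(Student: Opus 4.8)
\emph{Sketch of a conditional proof.} Since \conjref{c:Langlands+Eisenstein} is formulated relative to the (still conjectural) equivalence $\BL_G$ of \eqref{e:GLC}, the plan is to deduce the commutativity of \eqref{e:Langlands+Eisenstein} \emph{granting} GLC for $G$ and for $M$ together with its characterizing properties, and in particular its compatibility with the spherical Hecke action coming from geometric Satake. The strategy is the standard one: present the two circuits of \eqref{e:Langlands+Eisenstein}, namely $\BL_G\circ \Eis_!$ and $\Eis_{\on{spec}}\circ \BL_M$, as functors $\Dmod(\Bun_M)\to \IndCoh_{\on{Nilp}_{glob}}(\LocSys_\cG)$ carrying the same additional structure, and then invoke a uniqueness statement to conclude that they agree up to the predicted graded line bundle twist on $\LocSys_\cM$.

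The first step is to record the Hecke compatibility of the two Eisenstein functors. On the geometric side, the embedding $\cM\hookrightarrow \cG$ and geometric Satake equip $\Dmod(\Bun_G)$ and $\Dmod(\Bun_M)$ with actions of $\Rep(\cG)$, respectively $\Rep(\cM)$, at all points of $X$, and $\Eis_!$ is a morphism of $\Rep(\cG)$-module categories, where $\Dmod(\Bun_M)$ is regarded as a $\Rep(\cG)$-module via the restriction functor $\Rep(\cG)\to \Rep(\cM)$; this is a geometric assertion proved by comparing the Hecke correspondences on $\Bun_P$ in \eqref{e:basic diagram intro intro} with those on $\Bun_G$ and $\Bun_M$, and is where one uses that the affine Grassmannian of $P$ maps compatibly to those of $G$ and of $M$. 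On the spectral side the analogous compatibility of $\Eis_{\on{spec}}$ is immediate from \eqref{e:basic diagram spec}, since forming associated bundles along $\LocSys_\cM\leftarrow\LocSys_\cP\to\LocSys_\cG$ intertwines the tautological tensoring actions. Using that $\BL_G$ and $\BL_M$ are equivalences of module categories over $\Rep(\cG)$ and $\Rep(\cM)$ — a property built into GLC — both circuits of \eqref{e:Langlands+Eisenstein} become $\Rep(\cG)$-linear functors from $\Dmod(\Bun_M)$ to $\IndCoh_{\on{Nilp}_{glob}}(\LocSys_\cG)$.

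The second step is a uniqueness/generation argument. One reduces to $M=T$ by transitivity of Eisenstein series — a parabolic $P\subset G$ and a Borel of $M$ compose to a Borel of $G$, and both $\Eis_!$ and $\Eis_{\on{spec}}$, hence both circuits of \eqref{e:Langlands+Eisenstein}, are transitive in this sense — together with induction on the semisimple rank of $M$; the base case $M=T$ is handled by taking $\BL_T$ to be Fourier--Mukai, as in the Remark following \conjref{c:Langlands+duality}, where the compatibility is checked directly. For the inductive/uniqueness step, a Hecke-linear functor out of $\Dmod(\Bun_M)$ is pinned down by its restriction to a Hecke-generating family of objects — concretely $\omega_{\Bun_M}$, or the Whittaker/Poincar\'e object — on which both circuits can be computed and compared; the discrepancy between $(\sfq_{\on{spec}})^*$ and $(\sfq_{\on{spec}})^!$ in the definition of $\Eis_{\on{spec}}$ (see \remref{r:Gore}), the relative dimension of $\Bun_P$ over $\Bun_G\times\Bun_M$, and the cohomological shift in $\BL$ together produce exactly the graded line bundle in the statement. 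An alternative packaging that avoids naming generators is to pass to right adjoints: $(\Eis_!,\on{CT}_*)$ and $(\Eis_{\on{spec}},\on{CT}_{\on{spec}})$ are adjoint pairs, so it suffices to intertwine $\on{CT}_*$ with $\on{CT}_{\on{spec}}$ up to the twist and then dualize; here \thmref{t:CT} and \thmref{t:strange preview}, combined with the $\LocSys$-side Serre-duality compatibility of \conjref{c:Langlands+duality}, can be fed in to reduce the $\on{CT}_*$ statement to the $\on{CT}_!$ statement, which involves only the $*$-pullback and $!$-pushforward that interact transparently with Hecke functors.

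The main obstacle is precisely the uniqueness/generation input: making rigorous the sense in which $\Dmod(\Bun_M)$ is generated, as a Hecke module, by objects on which both sides of \eqref{e:Langlands+Eisenstein} are computable, and controlling the higher coherences of the $\Rep(\cG)$-linear structures — this is of essentially the same order of difficulty as the compatibility of GLC with the Whittaker normalization, and is where the genuine work lies. Secondary difficulties are checking that the construction of $\BL$-compatibility with Eisenstein behaves well in a tower of parabolics (so that the reduction to $M=T$ is legitimate), and bookkeeping the graded line bundle so that it matches the predicted one on the nose; the latter is controlled but delicate, involving $\Bun_P$ versus $\Bun_{P^-}$ and the cohomological shifts already visible in \thmref{t:miraculous preview} and \thmref{t:strange preview}.
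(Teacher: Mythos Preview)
The statement you are attempting to prove is \conjref{c:Langlands+Eisenstein}, which the paper states as a \emph{conjecture} and does not prove. It appears in \secref{ss:geom lang}, which the paper explicitly flags as motivational: ``The contents of this subsection play a motivational role only, and the reader not familiar with the objects discussed below can skip this subsection.'' The conjecture is cited from \cite[Conjecture 12.2.9]{AG} and \cite[Sect.~6.4.5]{Ga1} as a desideratum for the (itself conjectural) equivalence $\BL_G$; the paper then uses it, together with \conjref{c:Langlands+duality}, only to \emph{heuristically derive} the strange functional equation \eqref{e:strange intro} as a sanity check on what $\psId_{\Bun_G,!}$ ought to do. There is no proof in the paper to compare your proposal against.

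As for your sketch on its own terms: you correctly identify that any argument must be conditional on GLC and its characterizing properties, and the Hecke-linearity and transitivity-in-$P$ ingredients you name are the right ones. But you also correctly diagnose the real issue in your final paragraph: the ``uniqueness/generation input'' you need---that a $\Rep(\cG)$-linear functor out of $\Dmod(\Bun_M)$ is determined by its value on a manageable generating family, with all higher coherences---is essentially equivalent in difficulty to the conjecture itself (and to the Whittaker-normalization compatibility of GLC). So your sketch is a reasonable outline of what a proof would have to contain, but it does not reduce the conjecture to anything easier; in particular, invoking \thmref{t:CT} and \thmref{t:strange preview} does not help here, since those are results \emph{about} $\Dmod(\Bun_G)$ that the paper proves unconditionally, whereas the conjecture is a constraint on the still-hypothetical $\BL_G$.
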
 

\sssec{Recovering the ``strange" functional equation}

Let us now analyze what the combination of Conjectures \ref{c:Langlands+duality} and \ref{c:Langlands+Eisenstein}
says about the interaction of the functor $\psId_{\Bun_G,!}$ with $\Eis_!$. The conclusion that we will draw will
amount to \thmref{t:strange preview} of the present paper (the reader may safely choose to skip the derivation that
follows). 

\medskip

First, passing to the right adjoint and then dual functors in \eqref{e:Langlands+Eisenstein}, we obtain a diagram
\begin{equation} \label{e:Langlands+Eisenstein dual}
\CD
\Dmod(\Bun_G)^\vee   @>{(\BL_G^\vee)^{-1}}>> (\IndCoh_{\on{Nilp}_{glob}}(\LocSys_\cG)^\vee \\
@A{(\on{CT}_*)^\vee}AA    @AA{(\on{CT}_{\on{spec}})^\vee}A   \\
\Dmod(\Bun_M)^\vee   @>{(\BL_M^\vee)^{-1}}>> (\IndCoh_{\on{Nilp}_{glob}}(\LocSys_\cM))^\vee
\endCD
\end{equation}
that commutes up to a tensoring by a graded line bundle on $\LocSys_\cM$. 

\medskip

Next, we note that the diagram
\begin{equation} \label{e:coh duality and Eis}
\CD
(\IndCoh_{\on{Nilp}_{glob}}(\LocSys_\cG)^\vee   @>{\bD^{\on{Serre}}_{\LocSys_\cG}}>>   \IndCoh_{\on{Nilp}_{glob}}(\LocSys_\cG)  \\
@A{(\on{CT}_{\on{spec}})^\vee}AA   @A{\Eis_{\on{spec}}}AA \\ 
(\IndCoh_{\on{Nilp}_{glob}}(\LocSys_\cM)^\vee   @>{\bD^{\on{Serre}}_{\LocSys_\cM}}>>   \IndCoh_{\on{Nilp}_{glob}}(\LocSys_\cM)
\endCD
\end{equation} 
also commutes up to a tensoring by a graded line bundle on $\LocSys_\cM$, see Remark \ref{r:Gore}.

\medskip

Now, juxtaposing the diagrams \eqref{e:Langlands+Eisenstein}, \eqref{e:Langlands+Eisenstein dual},
\eqref{e:coh duality and Eis} with the diagrams \eqref{e:Langlands+duality} for the groups $G$ and $M$
respectively, we obtain a commutative diagram: 

\begin{equation} \label{e:strange intro}
\CD
\Dmod(\Bun_G)^\vee  @>{\psId_{\Bun_G,!}}>>    \Dmod(\Bun_G)  \\
@A{(\on{CT}_*)^\vee}AA    @AA{\tau_G\circ \Eis_!\circ \tau_M}A   \\
\Dmod(\Bun_M)^\vee  @>{\psId_{\Bun_M,!}}>>    \Dmod(\Bun_M).
\endCD
\end{equation}

Notice now that $\tau_G\circ \Eis_!\circ \tau_M\simeq \Eis^-_!$, so the commutative diagram  
\eqref{e:strange intro} recovers the isomorphism of \thmref{t:strange preview}. 

\ssec{The usual functional equation}  \label{ss:usual}

As was mentioned above, we view the commutativity of the diagram \eqref{e:strange intro} as a kind of 
``strange" functional equation, hence the title of this paper. 

\medskip

Let us now compare it to the usual functional equation of \cite[Theorem 2.1.8]{BG}. 

\sssec{}

In {\it loc.cit.} one considered
the case of $P=B$, the Borel subgroup and hence $M=T$, the abstract Cartan. We consider the full subcategory
$$\Dmod(\Bun_T)^{\on{reg}}\subset \Dmod(\Bun_T),$$
defined as in \cite[Sect. 2.1.7]{BG}. This is a full subcategory that under the Fourier-Mukai equivalence
$$\Dmod(\Bun_T)\simeq \QCoh(\LocSys_\cT)$$
corresponds to 
$$\QCoh(\LocSys_\cT^{\on{reg}})\hookrightarrow \QCoh(\LocSys_\cT),$$
where $\LocSys_\cT^{\on{reg}}\subset \LocSys_\cT$ is the open 
locus of $\LocSys_\cT$ consisting of those $\cT$-local systems that for every root $\alpha$
of $\cT$ induce a non-trivial local system for $\BG_m$. 

\medskip

Instead of the functor $\Eis_!$, or the functor that we introduce as $\Eis_*:=\sfp_*\circ \sfq^!$ (see \secref{sss:Eis *}),
an intermediate version was considered in \cite[Sect. 2.1]{BG}, which we will denote here by $\Eis_{!*}$. The definition
of $\Eis_{!*}$ uses the compactification of the morphism $\sfp$, introduced in \cite[Sect. 1.2]{BG}:
$$
\xy
(-20,0)*+{\Bun_G}="X";
(20,0)*+{\Bun_T.}="Y";
(0,20)*+{\ol\Bun_B}="Z";
(-20,20)*+{\Bun_B}="W";
{\ar@{->}_{\ol\sfp} "Z";"X"};
{\ar@{->}^{\ol\sfq} "Z";"Y"};
{\ar@{^{(}->}^r "W";"Z"};  
\endxy
$$

The assertion of \cite[Theorem 2.1.8]{BG} (for the longest element of the Weyl group) is:

\begin{thm}  \label{t:BG}
The following diagram of functors 
$$
\CD
\Dmod(\Bun_G) @>{\on{Id}_{\Dmod(\Bun_G) }}>>    \Dmod(\Bun_G)  \\
@A{\Eis_{!*}}AA    @AA{\Eis^-_{!*}}A   \\
\Dmod(\Bun_T) & &    \Dmod(\Bun_T) \\
@AAA    @AAA  \\
\Dmod(\Bun_T)^{\on{reg}} @>{\rho\on{-shift}}>>    \Dmod(\Bun_T)^{\on{reg}},
\endCD
$$
commutes up to a cohomological shift, where $\rho\text{-shift}$ is the functor
of translation by the point $2\rho(\Omega_X)$. \footnote{Here $2\rho:\BG_m\to T$ is the coweight
equal to the sum of positive coroots, and $\Omega_X\in \Pic(X)=\Bun_{\BG_m}$ is the canonical
line bundle on $X$.} 
\end{thm}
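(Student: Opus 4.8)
The plan is to deduce Theorem~\ref{t:BG} from the ``strange'' functional equation of Theorem~\ref{t:strange preview} (equivalently, from the commutative square \eqref{e:strange intro}) combined with the basic duality theory of the pseudo-identity functor and an explicit computation on the torus. The key observation is that on $\Bun_T$ the stack is a product of copies of $\on{Pic}(X)$-type factors by a classifying space, and in particular it is miraculous with the pseudo-identity functor $\psId_{\Bun_T,!}$ computable explicitly via Fourier--Mukai; over the regular locus $\Dmod(\Bun_T)^{\on{reg}}$ it becomes, up to a cohomological shift, translation by the point $2\rho(\Omega_X)$ composed with the Cartan (inversion) involution $\tau_T$. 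This is precisely the $\rho$-shift appearing in the bottom row of the diagram, once one tracks that $\tau_T$ is already absorbed into the passage from $\Eis_!$ to $\Eis^-_!$.

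First I would set up the comparison between $\Eis_{!*}$ (the intermediate Eisenstein functor built from $\ol\sfp\colon\BunBb\to\Bun_G$ and $\ol\sfq$) and the functors $\Eis_!,\Eis_*$ appearing in \eqref{e:strange intro}. On the regular locus $\Dmod(\Bun_T)^{\on{reg}}$ the three functors $\Eis_!$, $\Eis_{!*}$, $\Eis_*$ all coincide up to a shift: this is the cleanness statement from \cite{BG}, which says that over the regular locus the $!$- and $*$-extensions along the open embedding $r\colon\Bun_B\hookrightarrow\BunBb$ agree, so the boundary $\BunBb\setminus\Bun_B$ contributes nothing. Thus restricting \eqref{e:strange intro} to the regular subcategory and post-composing with this identification turns the left vertical arrow $(\on{CT}_*)^\vee$ — which by \secref{ss:dual of CT} is a close relative of $\Eis_*$ — into (a shift of) $\Eis_{!*}$ as well, after applying $\psId_{\Bun_T,!}$.

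Next I would feed the explicit torus computation into the diagram. Restricting the bottom edge $\psId_{\Bun_M,!}$ of \eqref{e:strange intro} (with $M=T$) to $\Dmod(\Bun_T)^{\on{reg}}$ and identifying it with $\tau_T$ followed by translation by $2\rho(\Omega_X)$ and a shift, and using $\tau_G\circ\Eis_!\circ\tau_M\simeq\Eis^-_!$ to rewrite the right edge, the square \eqref{e:strange intro} becomes exactly the assertion that $\Eis_{!*}\simeq\Eis^-_{!*}\circ(\rho\text{-shift})$ up to a cohomological shift over the regular locus — which is the content of Theorem~\ref{t:BG}. The vertical arrows $\Dmod(\Bun_T)^{\on{reg}}\to\Dmod(\Bun_T)$ in the statement are simply the inclusions, and $\psId_{\Bun_G,!}$ on the source side collapses to $\on{Id}_{\Dmod(\Bun_G)}$ precisely because $\psId$ is an involution-like equivalence once composed correctly — more carefully, because the image of $\Eis_{!*}$ restricted to the regular locus lies in a subcategory on which $\psId_{\Bun_G,!}$ acts as the identity up to shift, a point one extracts from the miraculousness of $\Bun_G$ (Theorem~\ref{t:miraculous preview}).

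The main obstacle, I expect, is the torus computation of $\psId_{\Bun_T,!}$ on the regular locus and its identification with the $\rho$-shift: one has to unwind the definition of $\psId_{\CY,!}$ from \cite[Sect. 4.4.8]{DrGa2} in the case $\CY=\Bun_T$, pass through the Fourier--Mukai equivalence $\Dmod(\Bun_T)\simeq\QCoh(\LocSys_\cT)$, and check that Serre duality on $\LocSys_\cT$ corresponds under Fourier--Mukai to inversion twisted by the specific line bundle whose ``value'' is $2\rho(\Omega_X)$ — the appearance of $2\rho$ rather than some other coweight is dictated by the determinant of the cohomology along the fibres of $\ol\sfp$, i.e.\ by the relative dualizing complex of $\BunBb$ over $\Bun_G$, and getting the precise coweight (and the precise cohomological shift) right is the delicate bookkeeping step. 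A secondary subtlety is justifying the passage between $(\on{CT}_*)^\vee$ and $\Eis_{!*}$ on the regular locus with control of all shifts; this is where the cleanness results of \cite{BG} do the real work, and one must make sure the identification of $\Eis_!$, $\Eis_{!*}$, $\Eis_*$ over $\Dmod(\Bun_T)^{\on{reg}}$ is compatible with duality in the sense needed to plug into \eqref{e:strange intro}.
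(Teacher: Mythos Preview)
There is a fundamental misreading here: the paper does not prove \thmref{t:BG} at all. It is quoted verbatim as \cite[Theorem 2.1.8]{BG} --- a result of Braverman--Gaitsgory established by an entirely different argument (using the compactification $\ol\Bun_B$ and intersection cohomology sheaves) --- and is brought in only for \emph{contrast} with the strange functional equation. So there is no ``paper's own proof'' to compare against; the correct answer is that this theorem is cited, not proved.

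Your proposed derivation from \thmref{t:strange preview} also does not work, and the paper's own list of ``points of difference'' in \secref{ss:usual} explains exactly why. You assert that $\psId_{\Bun_T,!}$, restricted to the regular locus, is translation by $2\rho(\Omega_X)$ up to a shift; but the paper states explicitly that for $M=T$ the lower horizontal arrow $\psId_{\Bun_T,!}$ is isomorphic to the \emph{identity} functor up to a cohomological shift --- it is in \thmref{t:BG}, not in the strange functional equation, that the $\rho$-shift appears. Likewise you claim $\psId_{\Bun_G,!}$ collapses to $\on{Id}_{\Dmod(\Bun_G)}$ on the image of $\Eis_{!*}$ over the regular locus; the paper says the opposite, that $\psId_{\Bun_G,!}$ is ``geometrically non-trivial'' precisely in contrast to the identity appearing in \thmref{t:BG}. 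Miraculousness (\thmref{t:miraculous preview}) only tells you $\psId_{\Bun_G,!}$ is an equivalence, not that it is the identity on any subcategory. So the two key identifications your argument rests on are both false, and the $\rho$-shift does not arise from the torus pseudo-identity as you suggest --- its origin in \cite{BG} is the intertwining operator and the IC sheaf on $\ol\Bun_B$, which is genuinely different input.
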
 

\thmref{t:BG} is a geometric analog of the usual functional equation for Eisenstein series
in the theory of automorphic functions. 

\sssec{}

Let us emphasize the following points of difference between Theorems \ref{t:strange preview}
and \ref{t:BG}:

\begin{itemize}

\item \thmref{t:strange preview} compares the functors $\Eis^-_!$ and $(\on{CT}_*)^\vee$ that take
values in different categories, i.e., $\Dmod(\Bun_G)$ vs. $\Dmod(\Bun_G)^\vee$, whereas
in \thmref{t:BG} both $\Eis_{!*}$ and $\Eis^-_{!*}$ map to $\Dmod(\Bun_G)$. 

\item The vertical arrows in \thmref{t:strange preview} use geometrically different functors, 
while in \thmref{t:BG} these are functors of the same nature, i.e., $\Eis_{!*}$ and $\Eis^-_{!*}$. 

\item The upper horizontal arrow \thmref{t:strange preview} is the geometrically non-trivial
functor $\psId_{\Bun_G,!}$, while in \thmref{t:BG} it is the identity functor.

\item The lower horizontal arrow in \thmref{t:strange preview} for $M=T$ is isomorphic to
the identity functor, up to a cohomological shift, while in \thmref{t:BG} we have the functor
of $\rho$-shift. 

\item The commutation in \ref{t:strange preview} takes place on all of $\Dmod(\Bun_T)$,
whereas in \thmref{t:BG}, it only takes place on $\Dmod(\Bun_T)^{\on{reg}}$.

\end{itemize}

\ssec{Interaction with cuspidality}

There is yet one more set of results contained in this paper, which has to do
with the notion of cuspidality.

\sssec{}

The cuspidal subcategories
$$\Dmod(\Bun_G)_{\on{cusp}}\subset \Dmod(\Bun_G) \text{ and }
(\Dmod(\Bun_G)^\vee)_{\on{cusp}} \subset \Dmod(\Bun_G)^\vee$$
are defined as right-orthogonals of the subcategories generated by the essential
images of the functors
$$\Eis_!:\Dmod(\Bun_M)\to \Dmod(\Bun_G) \text{ and }
(\on{CT}_*)^\vee:\Dmod(\Bun_M)^\vee\to \Dmod(\Bun_G)^\vee,$$
respectively, for all \emph{proper} parabolics $P$ of $G$.

\sssec{}  \label{sss:naive intro}

Let us return to the setting of \secref{sss:dual first} are recall the ``naive" functor 
$$\psId_{\Bun_G,\on{naive}}:\Dmod(\Bun_G)^\vee\to \Dmod(\Bun_G),$$ 
see \secref{sss:dual first}. 

\medskip

As was mentioned in {\it loc.cit.}, the functor $\psId_{\Bun_G,\on{naive}}$ fails to be an equivalence
unless $G$ is a torus. However,  in \thmref{t:cusp naive equiv} we show:

\begin{thm} \label{t:cusp naive intro}
The restriction of the functor $\psId_{\Bun_G,\on{naive}}$ to 
$$(\Dmod(\Bun_G)^\vee)_{\on{cusp}} \subset \Dmod(\Bun_G)^\vee$$
defines an equivalence 
$$(\Dmod(\Bun_G)^\vee)_{\on{cusp}}\to \Dmod(\Bun_G)_{\on{cusp}}.$$
\end{thm}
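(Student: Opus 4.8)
The plan is to reduce the statement to the "strange" functional equation of \thmref{t:strange preview}, which relates $\psId_{\Bun_G,!}$ to Eisenstein/constant-term functors, together with a comparison between $\psId_{\Bun_G,\on{naive}}$ and $\psId_{\Bun_G,!}$. The key mechanism is that the two pseudo-identity functors differ precisely by contributions of the strata "at infinity" of $\Bun_G$ — i.e., by Eisenstein series from proper parabolics — and cuspidality is exactly the condition that kills those contributions. So on the cuspidal subcategory the two functors should agree, and it then suffices to show that $\psId_{\Bun_G,!}$ restricted to $(\Dmod(\Bun_G)^\vee)_{\on{cusp}}$ lands in, and is an equivalence onto, $\Dmod(\Bun_G)_{\on{cusp}}$; but the latter is a special case (or a consequence) of the miraculousness theorem \thmref{t:miraculous preview} once one checks that $\psId_{\Bun_G,!}$ matches the two definitions of cuspidality.

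First I would make precise the relation between $\psId_{\Bun_G,\on{naive}}$ and $\psId_{\Bun_G,!}$. Both are built from the diagonal $\Bun_G \to \Bun_G\times\Bun_G$: the "naive" one uses $!$-pushforward (or the naive coevaluation) and the Drinfeld one uses a renormalized version adapted to truncatability. I expect a canonical natural transformation $\psId_{\Bun_G,\on{naive}} \to \psId_{\Bun_G,!}$ (or the reverse), whose cone is expressed, via the stratification of $\Bun_G$ by the defect-of-semistability strata, as a sum of terms each of which factors through $\Eis_!^P$ for a proper parabolic $P$ on one side and through $(\on{CT}_*^P)^\vee$ on the other. Concretely I would use the second-adjointness / Braden-type description of the boundary behavior (the same input that proves \thmref{t:CT}), so that the cone of $\psId_{\Bun_G,\on{naive}} \to \psId_{\Bun_G,!}$ is built out of $\Eis_!^P\circ(\text{something})$'s. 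Restricting the source to $(\Dmod(\Bun_G)^\vee)_{\on{cusp}}$ — which by definition right-annihilates all $(\on{CT}_*^P)^\vee$ — kills the cone, so the two functors become isomorphic on the cuspidal subcategory.

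Next I would show $\psId_{\Bun_G,!}$ carries $(\Dmod(\Bun_G)^\vee)_{\on{cusp}}$ into $\Dmod(\Bun_G)_{\on{cusp}}$ and conversely that its inverse (available by \thmref{t:miraculous preview}) carries $\Dmod(\Bun_G)_{\on{cusp}}$ into $(\Dmod(\Bun_G)^\vee)_{\on{cusp}}$. This is where the "strange" functional equation enters: the commutative square of \eqref{e:strange intro} says $\psId_{\Bun_G,!}\circ (\on{CT}_*^P)^\vee \simeq \Eis_!^{P,-}\circ \psId_{\Bun_M,!}$, so the essential image under $\psId_{\Bun_G,!}$ of the subcategory generated by the $(\on{CT}_*^P)^\vee$ equals the subcategory generated by the $\Eis_!^{P,-}$ — and since $\Eis_!^{P,-}$ and $\Eis_!^P$ generate the same subcategory (pass through the Cartan involution, as noted after \eqref{e:strange intro}), $\psId_{\Bun_G,!}$ matches the "Eisenstein-generated" subcategory of $\Dmod(\Bun_G)^\vee$ with that of $\Dmod(\Bun_G)$. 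Taking right orthogonals (and using that $\psId_{\Bun_G,!}$ is an equivalence, so it commutes with passing to right orthogonals once we know it matches the subcategories and is compatible with the relevant pairings) gives $\psId_{\Bun_G,!}\big((\Dmod(\Bun_G)^\vee)_{\on{cusp}}\big) = \Dmod(\Bun_G)_{\on{cusp}}$. Combining with the previous paragraph, $\psId_{\Bun_G,\on{naive}}$ restricted to $(\Dmod(\Bun_G)^\vee)_{\on{cusp}}$ is isomorphic to $\psId_{\Bun_G,!}$ restricted there, hence an equivalence onto $\Dmod(\Bun_G)_{\on{cusp}}$.

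The main obstacle I anticipate is the first step — pinning down the cone of $\psId_{\Bun_G,\on{naive}}\to\psId_{\Bun_G,!}$ in terms of Eisenstein functors from proper parabolics, uniformly and compatibly with the duality identifications. This requires an honest analysis of how the diagonal of $\Bun_G$ degenerates along the boundary strata and how the truncation functors defining $\psId_{\Bun_G,!}$ interact with $!$- versus $*$-extensions across those strata; it is essentially a stacky, non-quasi-compact version of a "clean vs. non-clean extension" computation. A secondary subtlety is bookkeeping: making sure all the duality functors $(-)^\vee$, the Cartan involution $\tau$, and the cohomological shifts are consistently tracked so that "matches the subcategory generated by $\Eis_!^P$" on both sides is literally the same subcategory, not merely an abstractly equivalent one. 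Once those are in hand, the rest is a formal orthogonality argument.
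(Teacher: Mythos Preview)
Your approach has a circularity problem. You invoke miraculousness (\thmref{t:miraculous preview}) to conclude that $\psId_{\Bun_G,!}$ is an equivalence, hence restricts to an equivalence between the cuspidal subcategories. But in the paper the proof of miraculousness (\thmref{t:duality}) \emph{uses} the present theorem as input: \corref{c:equiv on cusp} (that $\psId_{\Bun_G,!}$ is an equivalence on cuspidal objects) is deduced from \thmref{t:cusp naive equiv} together with \corref{c:! and * on cusp}, and is then fed into the inductive proof of \thmref{t:duality} to handle the cuspidal part. So you cannot appeal to miraculousness here; the logical flow goes the other way.

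The paper's proof is independent of $\psId_{\Bun_G,!}$ and of the strange functional equation. The key input is the support property of cuspidal objects: there is a fixed quasi-compact open $\CU_G \overset{\jmath_G}\hookrightarrow \Bun_G$ on which every cuspidal object lives (\propref{p:!-supp cusp}), and likewise on the co-side (\propref{p:!-supp cusp co}). One then writes down an explicit inverse, $\CF \mapsto (\jmath_G)_{\on{co},*}\circ (\jmath_G)^*(\CF)$, and checks directly that the two composites are the identity. Incidentally, this support argument is also what is missing in your first step: the cone in \propref{p:diff} is filtered by functors factoring through $\on{CT}^\mu_{\on{co},*}$, whereas co-cuspidality is by definition vanishing of $\on{CT}_{\on{co},?}$; these are not equivalent in general (see the remark following \corref{c:cusp and dual}), and it is precisely the quasi-compact support of co-cuspidal objects that makes them coincide (\corref{c:descr cusp co}).
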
 

One can view \thmref{t:cusp naive intro} as expressing the fact that the objects of $(\Dmod(\Bun_G)^\vee)_{\on{cusp}}$
and $\Dmod(\Bun_G)_{\on{cusp}}$ are ``supported" on quasi-compact open substacks (see Propositions
\ref{p:!-supp cusp} and \ref{p:!-supp cusp co} for a precise statement). 

\sssec{}

In addition, in \corref{c:! and * on cusp} we show:

\begin{thm} \label{t:! and * on cusp intro}
The functors $$\psId_{\Bun_G,\on{naive}}|_{(\Dmod(\Bun_G)^\vee)_{\on{cusp}}} \text{ and }
\psId_{\Bun_G,!}|_{(\Dmod(\Bun_G)^\vee)_{\on{cusp}}}$$ are isomorphic up to a cohomological shift.
\end{thm}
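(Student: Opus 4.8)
### Proof proposal for Theorem~\ref{t:! and * on cusp intro}

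The plan is to reduce the comparison of $\psId_{\Bun_G,\on{naive}}$ and $\psId_{\Bun_G,!}$ on the cuspidal subcategory to a statement about quasi-compact open substacks, where the two pseudo-identity functors are known to agree up to shift. The starting point is \thmref{t:cusp naive intro} (i.e., \thmref{t:cusp naive equiv}) together with the ``support" statements alluded to after it (Propositions~\ref{p:!-supp cusp} and \ref{p:!-supp cusp co}): objects of $\Dmod(\Bun_G)_{\on{cusp}}$ are, up to the relevant extension functors, supported on a quasi-compact open $j:\CU\hookrightarrow\Bun_G$, and likewise for $(\Dmod(\Bun_G)^\vee)_{\on{cusp}}$ under the dual picture. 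Concretely, I would fix such a $\CU$ and use that the inclusion $\Dmod(\CU)_{\on{cusp}}\to\Dmod(\CU)$ and its dual-side analogue are compatible with the localization/corestriction functors $j^!,\,j_*,\,j^*,\,j_!$ relating $\Dmod(\CU)$ and $\Dmod(\Bun_G)$.

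\medskip

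First I would recall the definitions of the two functors from \secref{ss:dual} and \secref{ss:pseudo}: $\psId_{\CY,\on{naive}}$ is built from the ``naive" counit using the diagonal and $!$-pullback along it, while $\psId_{\CY,!}$ is Drinfeld's variant that on each quasi-compact piece differs from the naive one only by the shift/twist coming from the fact that on a quasi-compact stack both compute (up to that shift) the standard self-duality of $\Dmod$. The key general fact I would invoke is: \emph{for $\CY$ quasi-compact, $\psId_{\CY,\on{naive}}$ and $\psId_{\CY,!}$ agree up to a cohomological shift} (this is essentially the content of the quasi-compact case in \cite{DrGa2}). So the entire problem is to propagate this from $\CU$ to the cuspidal part of $\Bun_G$.

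\medskip

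The main steps, in order:
\begin{itemize}
\item[(1)] Using Propositions~\ref{p:!-supp cusp} and \ref{p:!-supp cusp co}, choose a quasi-compact open $j:\CU\hookrightarrow\Bun_G$ such that every object of $\Dmod(\Bun_G)_{\on{cusp}}$ is $!$-extended (equivalently $*$-extended, by the support statement) from $\CU$, and dually every object of $(\Dmod(\Bun_G)^\vee)_{\on{cusp}}$ comes via the dual of $j^*$ (or $j_!$) from $\Dmod(\CU)^\vee$. This identifies $\Dmod(\Bun_G)_{\on{cusp}}$ with a subcategory of $\Dmod(\CU)$ and $(\Dmod(\Bun_G)^\vee)_{\on{cusp}}$ with a subcategory of $\Dmod(\CU)^\vee$, compatibly with duality.
\item[(2)] Establish the base-change compatibility $\psId_{\Bun_G,!}\circ (j^*)^\vee \simeq j_! \circ \psId_{\CU,!}$ (and the analogous one for $\psId_{\on{naive}}$ with $j_*$ in place of $j_!$), i.e., that each pseudo-identity functor intertwines open restriction on the dual side with the corresponding extension on the geometric side. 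This is where one uses the explicit description of both functors via the diagonal and proper/smooth base change.
\item[(3)] On $\CU$ quasi-compact, invoke $\psId_{\CU,\on{naive}}\simeq\psId_{\CU,!}[\text{shift}]$.
\item[(4)] Combine (2), (3), and the fact (from the support statements) that on the cuspidal subcategories $j_!$ and $j_*$ agree after the identifications of (1), to conclude $\psId_{\Bun_G,\on{naive}}|_{\on{cusp}}\simeq \psId_{\Bun_G,!}|_{\on{cusp}}$ up to a shift.
\end{itemize}

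\medskip

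The hard part will be Step~(2): verifying that $\psId_{\Bun_G,!}$ genuinely commutes with the passage to a quasi-compact open in the precise form needed — i.e., that the difference between using $j_!$ and $j_*$ does not leak in, and that no extra correction terms from the non-quasi-compactness of $\Bun_G$ appear. This is exactly the point where cuspidality is essential: it is what forces the relevant objects to have ``compact support", so that $j_!$ and $j_*$ (hence the naive and genuine pseudo-identities) coincide on them. I expect the cleanest route is to deduce Step~(2) formally from the definition of $\psId_{\CY,!}$ as in \cite[Sect. 4.4.8]{DrGa2} together with the compatibility of that construction with open embeddings, and then to feed in Propositions~\ref{p:!-supp cusp}–\ref{p:!-supp cusp co} to kill the discrepancy. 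The cohomological shift in the final statement is simply the one already present in the quasi-compact comparison of Step~(3), transported along these identifications.
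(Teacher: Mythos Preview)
Your Step~(3) contains a genuine gap: the claim that $\psId_{\CU,\on{naive}}\simeq\psId_{\CU,!}$ up to shift for quasi-compact $\CU$ is false for stacks. The two functors correspond to the kernels $(\Delta_\CU)_*(\omega_\CU)$ and $(\Delta_\CU)_!(k_\CU)$ in $\Dmod(\CU\times\CU)$, and these agree up to shift only when the diagonal (more precisely, the refined map $\Delta^Z_\CU$ of \secref{s:pseudo}, obtained after dividing out by the center) is proper. For an open substack $\CU\subset\Bun_G$ this fails: a point corresponding to a $G$-bundle $\CP$ has automorphism group $\Aut(\CP)$, which for many bundles in $\CU$ is a positive-dimensional affine group strictly containing $Z_G$; hence $\Delta^Z_\CU$ is not proper and the two kernels differ by more than a shift. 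Quasi-compactness is irrelevant here---the obstruction is stackiness. What \cite{DrGa2} establishes in the quasi-compact case is that $\psId_{\CU,\on{naive}}$ is an \emph{equivalence}, not that it coincides with $\psId_{\CU,!}$; indeed, whether $\psId_{\CU,!}$ is an equivalence is precisely the ``miraculous'' property, which is nontrivial even quasi-compactly.

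The paper's argument is structurally different and does not reduce to a quasi-compact open. One first constructs a natural transformation $\psId_{\Bun_G,!}\to\psId_{\Bun_G,\on{naive}}[-2\dim(\Bun_G)-\dim(Z_G)]$ directly from the map $(\Delta^Z_{\Bun_G})_!\to(\Delta^Z_{\Bun_G})_*$, and then analyzes its cone $\psId_{\Bun_G,\on{diff}}$ via \propref{p:diff}: this cone admits a filtration whose subquotients all factor through constant-term functors $\on{CT}^\mu_{\on{co},*}$ for \emph{proper} parabolics. The proof of \propref{p:diff} requires a compactification of $\Delta^Z_{\Bun_G}$ built from Vinberg's semigroup---this is the essential geometric input, and it is exactly what your Step~(3) attempts to bypass. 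Once \propref{p:diff} is available, Propositions~\ref{p:!-supp cusp} and~\ref{p:!-supp cusp co} are used to make the filtration finite on the relevant objects, and each subquotient then vanishes on cuspidal objects because $\on{CT}^\mu_{\on{co},*}$ kills them (\corref{c:descr cusp co}). So the support propositions you invoke do play a role, but only to ensure finiteness of the filtration, not to reduce the comparison itself to a quasi-compact piece.
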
 

\thmref{t:! and * on cusp intro} is responsible for the fact that previous studies in geometric Langlands
correspondence that involved only cuspidal objects did not see the appearance of the functor 
$\psId_{\Bun_G,!}$ and one could afford to ignore the difference between $\Dmod(\Bun_G)$ and
$\Dmod(\Bun_G)^\vee$. In other words, usual manipulations with Verdier duality on
cuspidal objects did not produce wrong results.

\ssec{Structure of the paper}

\sssec{}

In \secref{s:functors} we recall the setting of \cite{DrGa3}, and list the various Eisenstein series and constant term 
functors for the usual category $\Dmod(\Bun_G)$. In fact there are two adjoint pairs:
$(\Eis_!,\on{CT}_*)$ and $(\on{CT}^\mu_!,\Eis^\mu_*)$, where in the latter pair the superscript
$\mu\in \pi_1(M)=\pi_0(\Bun_M)$ indicates that we are considering one connected component
of $\Bun_M$ at a time. 

\medskip

We recall the main result of \cite{DrGa3} that says that the functors $\on{CT}_*$ and $\on{CT}^-_!$
are canonically isomorphic. 

\medskip

Next, we consider the category $\Dmod(\Bun_G)_{\on{co}}$, which is nearly tautologically identified with
the category that we have so far denoted $\Dmod(\Bun_G)^\vee$, and introduce the corresponding 
Eisenstein series and constant term functors:
$$(\Eis_{\on{co},*},\on{CT}_{\on{co},?}) \text{ and } (\on{CT}^\mu_{\on{co},*},\Eis^\mu_{\on{co},?}),$$
where $\Eis_{\on{co}*}:=(\on{CT}_*)^\vee$, $\on{CT}^\mu_{\on{co},*}:=(\Eis^\mu_*)^\vee$.

\medskip

The functor $\on{CT}_{\on{co},?}$ is something that we do not know how to express 
in terms of the usual functors in the theory of D-modules; it can be regarded as a
\emph{non-standard} functor in the terminology of \cite[Sect. 3.3]{DrGa2}.

\medskip

A priori, the functor $\Eis^\mu_{\on{co},?}$ would also be a non-standard functor. However, the 
isomorphism $\on{CT}_*\simeq \on{CT}^-_!$ gives rise to an isomorphism 
$$\Eis_{\on{co},?}\simeq \Eis^-_{\on{co},*}.$$

\sssec{}

In \secref{s:naive} we recall the definition of the functor
$$\psId_{\Bun_G,\on{naive}}:\Dmod(\Bun_G)_{\on{co}}\to \Dmod(\Bun_G),$$
and show that it intertwines the functors $\Eis_{\on{co},*}$ and $\Eis_*$, and 
$\on{CT}_{\on{co},*}$ and $\on{CT}_*$, respectively. 

\medskip

The remainder of this section is devoted to the study of the subcategory
$$\Dmod(\Bun_G)_{\on{co,cusp}}\subset \Dmod(\Bun_G),$$
and the proof of \thmref{t:cusp naive intro}, which says that the functor
$\psId_{\Bun_G,\on{naive}}$ defines an equivalence from 
$\Dmod(\Bun_G)_{\on{co,cusp}}$ to $\Dmod(\Bun_G)_{\on{cusp}}\subset \Dmod(\Bun_G)$. 

\sssec{}

In \secref{s:pseudo} we introduce the functor 
$$\psId_{\Bun_G,!}:\Dmod(\Bun_G)_{\on{co}}\to \Dmod(\Bun_G),$$
and study its behavior vis-\`a-vis the functor $\psId_{\Bun_G,\on{naive}}$. 
The relation is expressed by \propref{p:diff}, whose proof is deferred to \cite{Sch}.
\propref{p:diff} essentially says that the difference between $\psId_{\Bun_G,!}$
and $\psId_{\Bun_G,\on{naive}}$ can be expressed in terms of the Eisenstein and
constant term functors for \emph{proper} parabolics. 

\medskip

We prove \thmref{t:! and * on cusp intro} that says that the functors $\psId_{\Bun_G,!}$
and $\psId_{\Bun_G,\on{naive}}$ are isomorphic (up to a cohomological shift), when
evaluated on cuspidal objects.

\sssec{}

In \secref{s:strange} we prove our ``strange" functional equation, i.e., \thmref{t:strange preview}.
The proof is basically a formal manipulation from the isomorphism $\on{CT}_*\simeq \on{CT}^-_!$.

\medskip

Having \thmref{t:strange preview}, we get control of the behavior of the functor $\psId_{\Bun_G,!}$
on the Eisenstein part of the category $\Dmod(\Bun_G)_{\on{co}}$. From here we deduce 
our main result, \thmref{t:miraculous preview}.

\ssec{Conventions}

The conventions in this paper follow those adopted in \cite{DrGa2}. We refer the reader to {\it loc.cit.}
for a review of the theory of DG categories (freely used in this paper), and the theory of D-modules
on stacks. 

\ssec{Acknowledgements}

The author would like to express his gratitude to V. ~Drinfeld. The main results of this paper, in particular, \thmref{t:strange preview}, 
were obtained in collaboration with him. Even more crucially, the very idea
of the pseudo-identity functor $\psId_{\Bun_G,!}$ is the invention of his. 

\medskip

The author is supported by NSF grant DMS-1063470.

\section{The inventory of categories and functors}  \label{s:functors}

\ssec{Eisenstein series and constant term functors}

\sssec{}

Let $P$ be a parabolic in $G$ with Levi quotient $M$. For 
$$\mu\in \pi_1(M)\simeq \pi_0(\Bun_M)\simeq \pi_0(\Bun_P),$$
let $\Bun_M^\mu$ (resp., $\Bun_P^\mu$) denote the corresponding
connected component of $\Bun_M$ (resp., $\Bun_P$).

\sssec{}

Consider the diagram
\begin{equation} \label{e:basic diagram}
\xy
(-15,0)*+{\Bun_G}="X";
(15,0)*+{\Bun^\mu_M.}="Y";
(0,15)*+{\Bun^\mu_P}="Z";
{\ar@{->}_{\sfp} "Z";"X"};
{\ar@{->}^{\sfq} "Z";"Y"};
\endxy
\end{equation}

We consider the functor
$$\on{CT}^\mu_*:\Dmod(\Bun_G)\to \Dmod(\Bun^\mu_M), \quad \on{CT}^\mu_*=\sfq_*\circ \sfp^!.$$

\sssec{}

According to \cite[Corollary 1.1.3]{DrGa3}, the functor $\on{CT}^\mu_*$ admits a left adjoint, denoted
by $\Eis^\mu_!$. Explicitly, 
$$\Eis^\mu_!=\sfp_!\circ \sfq^*.$$

The above expression has to be understood as follows: the functor $$\sfq^*:\Dmod(\Bun^\mu_M)\to \Dmod(\Bun_P^\mu)$$
is defined (because the morphism $\sfq$ is smooth), and the partially defined functor $\sfp_!$, left adjoint to $\sfp^!$,
is defined on the essential image of $\sfq^*$ by \cite[Proposition 1.1.2]{DrGa3}. 

\sssec{}

We define the functor $\on{CT}_*:\Dmod(\Bun_G)\to \Dmod(\Bun_M)$ as 
$$\on{CT}_*\simeq \underset{\mu}\bigoplus\, \on{CT}_*^\mu.$$

\medskip

We define the functor $\Eis_!:\Dmod(\Bun_M)\to \Dmod(\Bun_G)$ as 
$$\Eis_!\simeq \underset{\mu}\bigoplus\, \Eis_!^\mu.$$

\begin{lem}
The functor $\Eis_!$ is the left adjoint of $\on{CT}_*$.
\end{lem}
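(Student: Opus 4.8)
The plan is to deduce the adjunction $(\Eis_!, \on{CT}_*)$ componentwise from the already-established adjunctions $(\Eis^\mu_!, \on{CT}^\mu_*)$ for each $\mu \in \pi_1(M)$. First I would recall that $\Bun_M = \bigsqcup_\mu \Bun_M^\mu$ as a disjoint union of its connected components, so that $\Dmod(\Bun_M) \simeq \prod_\mu \Dmod(\Bun_M^\mu)$ (equivalently $\bigoplus_\mu$, since all but finitely many summands vanish on any given object, and in the DG setting the product and coproduct of this family of categories agree). Under this identification the functor $\on{CT}_* = \bigoplus_\mu \on{CT}_*^\mu$ is the functor whose $\mu$-component is $\on{CT}_*^\mu$, and $\Eis_! = \bigoplus_\mu \Eis_!^\mu$ is the functor $(\F_\mu)_\mu \mapsto \bigoplus_\mu \Eis_!^\mu(\F_\mu)$.

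Next I would write out the Hom-space computation. For $\F = (\F_\mu)_\mu \in \Dmod(\Bun_M)$ and $\G \in \Dmod(\Bun_G)$, we have
$$
\Hom_{\Dmod(\Bun_G)}\bigl(\Eis_!(\F), \G\bigr) \simeq \Hom_{\Dmod(\Bun_G)}\Bigl(\underset{\mu}\bigoplus\, \Eis_!^\mu(\F_\mu),\, \G\Bigr) \simeq \underset{\mu}\prod\, \Hom_{\Dmod(\Bun_G)}\bigl(\Eis_!^\mu(\F_\mu),\, \G\bigr),
$$
where the last step uses that $\Hom$ out of a coproduct is the product of the $\Hom$'s. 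By the adjunction $(\Eis_!^\mu, \on{CT}_*^\mu)$ of \cite[Corollary 1.1.3]{DrGa3}, each factor is $\Hom_{\Dmod(\Bun_M^\mu)}(\F_\mu,\, \on{CT}_*^\mu(\G))$, and the product of these over $\mu$ is exactly $\Hom_{\Dmod(\Bun_M)}(\F,\, \on{CT}_*(\G))$ under the product decomposition of $\Dmod(\Bun_M)$. Chasing through, all isomorphisms are functorial in $\F$ and $\G$, which gives the desired adjunction. Alternatively, and perhaps more cleanly, one checks that the unit and counit transformations of the $\mu$-adjunctions assemble into a unit $\Id \to \on{CT}_* \circ \Eis_!$ and counit $\Eis_! \circ \on{CT}_* \to \Id$ satisfying the triangle identities, since these can be verified one component at a time.

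The only genuine point requiring care — and the step I would expect to be the main (minor) obstacle — is the interchange of the coproduct $\bigoplus_\mu$ with $\Hom$ and the verification that $\bigoplus_\mu \Eis_!^\mu$ is genuinely well-defined as a functor valued in $\Dmod(\Bun_G)$, i.e. that the relevant coproduct exists and the partially-defined functors $\sfp_!$ behave well. The existence of the coproduct is automatic since $\Dmod(\Bun_G)$ is cocomplete; the well-definedness of each $\Eis_!^\mu = \sfp_! \circ \sfq^*$ is precisely the content of \cite[Proposition 1.1.2]{DrGa3} cited in the excerpt; and the commutation of $\Hom$ with the coproduct is the defining universal property of the coproduct. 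So in fact there is no real obstacle — the lemma is a formal consequence of the component-wise adjunctions together with the decomposition of $\Bun_M$ into connected components — and the proof is a short bookkeeping argument.
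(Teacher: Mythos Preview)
Your argument is correct and is essentially the same as the paper's: both deduce the adjunction from the componentwise adjunctions $(\Eis_!^\mu,\on{CT}_*^\mu)$ together with the product decomposition $\Dmod(\Bun_M)\simeq \prod_\mu \Dmod(\Bun_M^\mu)$. The paper phrases the key point as the identity $\bigoplus_\mu \on{CT}_*^\mu \simeq \prod_\mu \on{CT}_*^\mu$ (which is exactly what makes your final step $\prod_\mu \Hom(\CF_\mu,\on{CT}_*^\mu(\CG)) \simeq \Hom(\CF,\on{CT}_*(\CG))$ work), while you unwind the Hom-sets directly.

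One small inaccuracy to flag: your parenthetical ``all but finitely many summands vanish on any given object'' is false --- an object of $\Dmod(\Bun_M)$ can be nonzero on infinitely many components, since $\Dmod$ of a disjoint union is the \emph{product}, not the sum, of the pieces. Fortunately your actual computation never uses this claim; you only use the product decomposition and the universal property of the coproduct $\bigoplus_\mu \Eis_!^\mu(\CF_\mu)$ in $\Dmod(\Bun_G)$, both of which are fine.
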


\begin{proof}
Follows from the fact that
$$\underset{\mu}\bigoplus\, \on{CT}_*^\mu\simeq \underset{\mu}\prod\, \on{CT}_*^\mu.$$
\end{proof}

\sssec{}  \label{sss:Eis *}

We now consider the functor $\Eis^\mu_*:\Dmod(\Bun^\mu_M)\to \Dmod(\Bun_G)$, defined
as
$$\Eis^\mu_*=\sfp_*\circ \sfq^!.$$

\medskip

We let $\Eis^{\mu,-}_*$ and $\on{CT}_*^{\mu,-}$ be similarly defined functors when instead of $P$
we use the opposite parabolic $P^-$ (we identify the Levi quotients of $P$ and $P^-$ via the 
isomorphsim $M\simeq P\cap P^-$). 

\medskip

The following is the main result of \cite{DrGa3}:

\begin{thm} \label{t:weird adj}
The functor $\Eis^\mu_*$ canonically identifies with the \emph{right} adjoint 
of $\on{CT}_*^{\mu,-}$.
\end{thm}

\sssec{}

We will use the notation 
$$\on{CT}^\mu_!:\Dmod(\Bun_G)\to \Dmod(\Bun^\mu_M)$$
for the left adjoint of $\Eis^\mu_*$. If $\CF\in \Dmod(\Bun_G)$ is such that the partially defined
left adjoint $\sfp^*$ of $\sfp_*$ is defined on $\CF$, then we have
$$\on{CT}^\mu_!(\CF)\simeq \sfq_!\circ \sfp^*(\CF).$$
(The functor $\sfq_!$, left adjoint to $\sfq^!$, is well-defined by \cite[Sect. 3.1.5]{DrGa3}.)

\medskip

Hence, \thmref{t:weird adj} can be reformulated as saying that $\on{CT}^\mu_!$ exists and is canonically
isomorphic to $\on{CT}_*^{\mu,-}$.

\sssec{}

We define the functor $\on{CT}_!:\Dmod(\Bun_G)\to \Dmod(\Bun_M)$ as 
$$\on{CT}_!\simeq \underset{\mu}\bigoplus\, \on{CT}_!^\mu,$$
so $\on{CT}_!\simeq \on{CT}^-_*$. 

\medskip

We define the functor $\Eis_*:\Dmod(\Bun_M)\to \Dmod(\Bun_G)$ as 
$$\Eis_*\simeq \underset{\mu}\bigoplus\, \Eis_*^\mu.$$

\medskip

We note, however, that it is \emph{no longer true} that $\Eis_*$ is the right adjoint of 
$\on{CT}_!$. (Rather, the right adjoint of $\on{CT}_!$ is the functor 
$\underset{\mu}\prod\, \Eis_*^\mu$.)

\medskip

In fact, one can show that the functor $\Eis_*$ \emph{does not admit} a left adjoint,
see \cite[Sect. 1.2.1]{DrGa3}. 

\ssec{The dual category}  \label{ss:dual}

\sssec{}

Let $\on{op-qc}(G)$ denote the poset of open substacks $U\overset{j}\hookrightarrow \Bun_G$ such that the intersection of $U$
with every connected component of $\Bun_G$ is quasi-compact. 

\medskip

We have 
\begin{equation} \label{e:DBunG as limit}
\Dmod(\Bun_G)\simeq \underset{U\in \on{op-qc}(G)}{\underset{\longleftarrow}{{\on{lim}}}}\, \Dmod(U),
\end{equation} 
where for $U_1\overset{j_{1,2}}\hookrightarrow U_2$, the corresponding functor
$\Dmod(U_2)\to \Dmod(U_1)$ is $j_{1,2}^*$ (see, e.g., \cite[Lemma 2.3.2]{DrGa2} for the proof). 

\medskip

Under the equivalence \eqref{e:DBunG as limit}, for 
$$(U\overset{j}\hookrightarrow \Bun_G) \in \on{op-qc}(G),$$
the tautological evaluation functor $\Dmod(\Bun_G)\to \Dmod(U)$ is $j^*$. 

\sssec{}

The following DG category was introduced in \cite[Sect. 4.3.3]{DrGa2}:
\begin{equation} \label{e:DBunG dual as colimit}
\Dmod(\Bun_G)_{\on{co}}:=\underset{U\in \on{op-qc}(G)}{\underset{\longrightarrow}{{\on{colim}}}}\, \Dmod(U),
\end{equation} 
where for $U_1\overset{j_{1,2}}\hookrightarrow U_2$, the corresponding functor
$\Dmod(U_1)\to \Dmod(U_2)$ is $(j_{1,2})_*$, and where the colimit is taken in the category
of cocomplete DG categories and continuous functors. 

\medskip

For $(U\overset{j}\hookrightarrow \Bun_G) \in \on{op-qc}(G)$ we let
$j_{\on{co},*}$ denote the tautological functor
\begin{equation} \label{e:j co}
j_{\on{co},*}:\Dmod(U)\to \Dmod(\Bun_G)_{\on{co}}.
\end{equation} 

\sssec{}

Verdier duality functors
$$\bD_{U}:\Dmod(U)^\vee\simeq \Dmod(U)$$
for $U\in  \on{op-qc}(G)$
and the identifications
$$((j_{1,2})_*)^\vee\simeq (j_{1,2})^*,\quad U_1\overset{j_{1,2}}\hookrightarrow U_2$$
give rise to an identification

\begin{equation} \label{e:dual of co prel}
\on{Funct}_{\on{cont}}(\Dmod(\Bun_G)_{\on{co}},\Vect)\simeq \Dmod(\Bun_G).
\end{equation}

Now, the main result of \cite{DrGa2}, namely, Theorem 4.1.8, implies: 

\begin{thm}
The category $\Dmod(\Bun_G)_{\on{co}}$ is compactly generated (and, in particular, dualizable).
\end{thm}

\begin{proof}

The truncatability of $\Bun_G$ means that in the presentation of 
$\Dmod(\Bun_G)_{\on{co}}$ as a colimit \eqref{e:DBunG dual as colimit}, we can replace the index poset 
$\on{op-qc}(G)$ by a \emph{cofinal} poset that consists of quasi-compact open substacks that are
\emph{co-truncative}.

\medskip

Then the resulting colimit
$$\underset{U}{\underset{\longrightarrow}{{\on{colim}}}}\, \Dmod(U)$$
consists of compactly generated categories and functors \emph{that preserve compactness}.
In this case, the resulting colimit category is compactly generated, e.g., by 
\cite[Corollary 1.9,4]{DrGa2}.

\end{proof} 

From \eqref{e:dual of co prel}, and knowing that $\Dmod(\Bun_G)_{\on{co}}$ is dualizable, 
we obtain a canonical identification
\begin{equation} \label{e:dual of co}
\bD_{\Bun_G}:\Dmod(\Bun_G)^\vee\simeq \Dmod(\Bun_G)_{\on{co}}.
\end{equation}

Under this identification, for $(U\overset{j}\hookrightarrow \Bun_G) \in \on{op-qc}(G)$ we have
the following canonical identification of functors
$$(j_{\on{co},*})^\vee\simeq j^*.$$

\sssec{}

Similar constructions and notation apply when instead of all of $\Bun_G$ we consider one of its connected
components $\Bun_G^\lambda$, $\lambda\in \pi_1(G)$.

\ssec{Dual, adjoint and conjugate functors} 

\sssec{}

Let $\bC_1$ and $\bC_2$ be two DG categories, and let
$$\sF:\bC_1\rightleftarrows \bC_2:\sG$$
be a pair of \emph{continuous} mutually adjoint functors.

\sssec{}  \label{sss:dual and adj}

By passing to dual functors, the adjunction data
$$\on{Id}_{\bC_1}\to \sG\circ \sF \text{ and } \sF\circ \sG\to \on{Id}_{\bC_2}$$
gives rise to
$$\on{Id}_{\bC^\vee_1}\to \sF^\vee\circ \sG^\vee \text{ and } \sG^\vee\circ \sF^\vee\to \on{Id}_{\bC^\vee_2},$$
making 
$$\sG^\vee:\bC^\vee_1\rightleftarrows \bC^\vee_2:\sF^\vee$$
into a pair of adjoint functors.

\sssec{}

Assume now that $\bC_1$ is compactly generated. In this case, the fact that the right adjoint $\sG$ of $\sF$
is continuous is equivalent to the fact that $\sF$ preserves compactness. I.e., it defines a functor
between non-cocomplete DG categories 
$$\bC_1^c\to \bC_2^c,$$
and hence, by passing to the opposite categories, a functor
\begin{equation} \label{e:functor on compact op}
(\bC_1^c)^{\on{op}}\to (\bC_2^c)^{\on{op}},
\end{equation}

\medskip

Following \cite[Sect. 1.5]{Ga2}, we let
$$\sF^{\on{op}}:\bC_1^\vee\to \bC_2^\vee$$
denote the functor obtained as the composition of:

\smallskip

\noindent{(i)} The identification $\bC_1^\vee \simeq \on{Ind}((\bC_1^c)^{\on{op}})$;

\smallskip

\noindent{(ii)} The ind-extension $\on{Ind}((\bC_1^c)^{\on{op}})\to \on{Ind}((\bC_2^c)^{\on{op}})$ of \eqref{e:functor on compact op};

\smallskip

\noindent{(iii)} The fully faithful embedding $(\bC_2^c)^{\on{op}}\hookrightarrow \bC_2^\vee$. 

\medskip

We call $\sF^{\on{op}}$ the functor \emph{conjugate} to $\sF$.

\sssec{}

The following is \cite[Lemma 1.5.3]{Ga2}:

\begin{lem}  \label{l:conj}
We have a canonical isomorphism of functors $\sF^{\on{op}}\simeq \sG^\vee$.
\end{lem}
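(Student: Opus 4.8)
\textbf{Proof plan for Lemma~\ref{l:conj}.} The plan is to unwind the definitions of $\sF^{\on{op}}$ and $\sG^\vee$ on the generating subcategory of compact objects and check they agree there; since both functors are continuous and $\bC_1^\vee \simeq \on{Ind}((\bC_1^c)^{\on{op}})$ is generated under colimits by $(\bC_1^c)^{\on{op}}$, an isomorphism on this subcategory ind-extends uniquely. So the real content is a statement about finite-dimensional (pairing) linear algebra, transported through the compact-object description of the dual category.

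First I would recall the concrete model for the duality: since $\bC_1$ is compactly generated, $\bC_1^\vee \simeq \on{Ind}((\bC_1^c)^{\on{op}})$, and the counit/unit of the duality pairing $\bC_1 \otimes \bC_1^\vee \to \Vect$ is determined by $\langle c, c'\rangle = \Hom_{\bC_1}(c', c)$ for $c, c' \in \bC_1^c$ (with $c'$ viewed in $(\bC_1^c)^{\on{op}} \subset \bC_1^\vee$). Under this model, for a continuous functor with continuous right adjoint $\sF: \bC_1 \to \bC_2$, the dual functor $\sF^\vee: \bC_2^\vee \to \bC_1^\vee$ is characterized by the relation $\langle c_1, \sF^\vee(d)\rangle_{\bC_1} \simeq \langle \sF(c_1), d\rangle_{\bC_2}$ naturally in $c_1 \in \bC_1$ and $d \in \bC_2^\vee$. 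On the other hand, $\sG^\vee: \bC_2^\vee \to \bC_1^\vee$ is the dual of the right adjoint $\sG$, characterized analogously by $\langle \sG(d_2'), d\rangle$-type formulas — but here I want instead to use the \emph{adjunction} between $\sG^\vee$ and $\sF^\vee$ recorded in \secref{sss:dual and adj}, which makes $\sF^\vee$ the right adjoint of $\sG^\vee$.

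Next I would identify $\sF^{\on{op}}$ on compacts. By construction $\sF^{\on{op}}$ is the ind-extension of the functor $(\bC_1^c)^{\on{op}} \to (\bC_2^c)^{\on{op}}$ induced by $\sF|_{\bC_1^c}: \bC_1^c \to \bC_2^c$ (which lands in compacts precisely because $\sG$ is continuous); concretely, for $c \in \bC_1^c$ regarded as an object of $\bC_1^\vee$ via $(\bC_1^c)^{\on{op}} \hookrightarrow \bC_1^\vee$, one has $\sF^{\on{op}}(c) = \sF(c) \in (\bC_2^c)^{\on{op}} \subset \bC_2^\vee$. I then compute, for $d_2 \in \bC_2^c$ viewed in $(\bC_2^c)^{\on{op}} \subset \bC_2^\vee$ and $c \in \bC_1^c$:
\begin{equation*}
\langle d_2, \sF^{\on{op}}(c)\rangle_{\bC_2} \simeq \Hom_{\bC_2}(\sF(c), d_2) \simeq \Hom_{\bC_1}(c, \sG(d_2)) \simeq \langle \sG(d_2), c\rangle_{\bC_1},
\end{equation*}
using the $(\sF, \sG)$-adjunction in the middle step. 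Comparing with the defining relation for $\sG^\vee$ — namely $\langle d_2, \sG^\vee(c)\rangle_{\bC_2} \simeq \langle \sG(d_2), c \rangle_{\bC_1}$ for $c \in \bC_1^c \hookrightarrow \bC_1^\vee$ — gives a canonical isomorphism $\sF^{\on{op}}(c) \simeq \sG^\vee(c)$, natural in $c \in (\bC_1^c)^{\on{op}}$. Since both $\sF^{\on{op}}$ and $\sG^\vee$ are continuous functors out of $\bC_1^\vee = \on{Ind}((\bC_1^c)^{\on{op}})$, this natural isomorphism extends canonically to all of $\bC_1^\vee$, proving the lemma.

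The main obstacle I anticipate is purely bookkeeping: making sure the various identifications — the model $\bC_1^\vee \simeq \on{Ind}((\bC_1^c)^{\on{op}})$, the embedding $(\bC_2^c)^{\on{op}} \hookrightarrow \bC_2^\vee$, and the defining universal properties of $\sF^\vee$ and $\sG^\vee$ — are all normalized compatibly, so that the three displayed isomorphisms chain together \emph{canonically} (not just abstractly) and the resulting isomorphism $\sF^{\on{op}} \simeq \sG^\vee$ is functorial rather than merely objectwise. There is no genuine difficulty beyond this, which is why I would simply cite \cite[Lemma 1.5.3]{Ga2} for the details; the argument above is the conceptual skeleton.
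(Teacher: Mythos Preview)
The paper does not give its own proof of this lemma; it simply cites \cite[Lemma 1.5.3]{Ga2}. Your sketch is the standard argument behind that citation and is essentially correct: identify $\bC_1^\vee$ with $\on{Ind}((\bC_1^c)^{\on{op}})$, check on compacts that pairing against $\sF^{\on{op}}(c)$ agrees with pairing against $\sG^\vee(c)$ via the $(\sF,\sG)$-adjunction, then ind-extend.

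One small bookkeeping slip (of exactly the kind you anticipated): in your displayed computation you declare ``$d_2 \in \bC_2^c$ viewed in $(\bC_2^c)^{\on{op}} \subset \bC_2^\vee$'', but for the pairing $\langle d_2, \sF^{\on{op}}(c)\rangle_{\bC_2}$ to make sense according to your own convention (first slot in $\bC_2$, second slot in $\bC_2^\vee$), the object $d_2$ should sit in $\bC_2$, not in $\bC_2^\vee$. With that correction the chain $\langle d_2, \sF^{\on{op}}(c)\rangle_{\bC_2} \simeq \Hom_{\bC_2}(\sF(c), d_2) \simeq \Hom_{\bC_1}(c, \sG(d_2)) \simeq \langle \sG(d_2), c\rangle_{\bC_1} \simeq \langle d_2, \sG^\vee(c)\rangle_{\bC_2}$ goes through cleanly.
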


\ssec{Dual Eisenstein series and constant term functors}

\sssec{}  \label{sss:dual Eis}

We define the functor
$$\Eis^\mu_{\on{co},*}:\Dmod(\Bun^\mu_M)_{\on{co}}\to \Dmod(\Bun_G)_{\on{co}}$$
as
$$\Eis^\mu_{\on{co},*}\simeq (\on{CT}^\mu_*)^\vee$$
under the identifications \eqref{e:dual of co} and
$$\bD_{\Bun_M^\mu}:\Dmod(\Bun_M^\mu)^\vee\simeq \Dmod(\Bun_M^\mu)_{\on{co}}.$$

\medskip

We define
$$\Eis_{\on{co},*}:\Dmod(\Bun_M)_{\on{co}}\to \Dmod(\Bun_G)_{\on{co}}$$
as 
$$\Eis_{\on{co},*}:=\underset{\mu}\bigoplus\, \Eis^\mu_{\on{co},*}\simeq (\on{CT}_*)^\vee.$$

\medskip

Note that by \lemref{l:conj}, we have:

\begin{cor}  \label{c:Eis op}
There are canonical isomorphisms
$$\Eis_{\on{co},*}\simeq (\Eis_!)^{\on{op}} \text{ and } \Eis^\mu_{\on{co},*}\simeq (\Eis^\mu_!)^{\on{op}}.$$
\end{cor}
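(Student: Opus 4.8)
The statement to prove is \corref{c:Eis op}: the canonical isomorphisms $\Eis_{\on{co},*}\simeq (\Eis_!)^{\on{op}}$ and $\Eis^\mu_{\on{co},*}\simeq (\Eis^\mu_!)^{\on{op}}$. The plan is to deduce both immediately from \lemref{l:conj}, once we have checked its hypotheses. Recall that by definition $\Eis^\mu_{\on{co},*} = (\on{CT}^\mu_*)^\vee$, the dual functor, under the identifications \eqref{e:dual of co} for $\Bun_G$ and $\Bun^\mu_M$. On the other hand, $\lemref{l:conj}$ says that for a continuous adjoint pair $\sF\dashv \sG$ with $\sF$ preserving compactness (equivalently $\sG$ continuous) and $\bC_1$ compactly generated, one has $\sF^{\on{op}}\simeq \sG^\vee$. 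So the only thing to verify is that the pair $(\Eis^\mu_!, \on{CT}^\mu_*)$ fits this template with $\sF = \Eis^\mu_!$ and $\sG = \on{CT}^\mu_*$.

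First I would recall that $\Eis^\mu_!$ is by construction (see \secref{sss:Eis *} and the discussion preceding the first lemma of \secref{s:functors}) the left adjoint of $\on{CT}^\mu_*$; this is \cite[Corollary 1.1.3]{DrGa3}. Hence we have a continuous adjoint pair $\Eis^\mu_!:\Dmod(\Bun^\mu_M)\rightleftarrows\Dmod(\Bun_G):\on{CT}^\mu_*$. The right adjoint $\on{CT}^\mu_*=\sfq_*\circ\sfp^!$ is continuous (both $\sfq_*$ and $\sfp^!$ are continuous functors of D-modules on stacks in the present framework, cf.\ the conventions of \cite{DrGa2}), so equivalently $\Eis^\mu_!$ preserves compact objects. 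Moreover $\Dmod(\Bun^\mu_M)$ is compactly generated — this is the truncatability result of \cite{DrGa2} applied to $\Bun_M$, or one may simply note that $\Bun^\mu_M$ is an algebraic stack whose D-module category is compactly generated by \cite[Theorem 4.1.8]{DrGa2}. Thus all hypotheses of \lemref{l:conj} are met.

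Applying \lemref{l:conj} to $\sF=\Eis^\mu_!$, $\sG=\on{CT}^\mu_*$ gives a canonical isomorphism $(\Eis^\mu_!)^{\on{op}}\simeq(\on{CT}^\mu_*)^\vee$. By definition the right-hand side is $\Eis^\mu_{\on{co},*}$, which proves the second isomorphism. For the first, one takes the direct sum over $\mu\in\pi_1(M)$: since $\Eis_!=\bigoplus_\mu\Eis^\mu_!$ and the operation $(-)^{\on{op}}$ of ind-extending the induced functor on compact objects commutes with direct sums of categories (the compact objects of $\bigoplus_\mu\bC_\mu$ are generated by those of the summands, and $(\bigoplus_\mu\bC_\mu)^\vee\simeq\bigoplus_\mu\bC_\mu^\vee$ since $\pi_1(M)$ is — or rather, each connected component contributes independently and the relevant limit-colimit bookkeeping is formal), we get $(\Eis_!)^{\on{op}}\simeq\bigoplus_\mu(\Eis^\mu_!)^{\on{op}}\simeq\bigoplus_\mu\Eis^\mu_{\on{co},*}=\Eis_{\on{co},*}$.

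The only point that requires a word of care — and the closest thing to an obstacle — is the compatibility of $(-)^{\on{op}}$ with the infinite direct sum indexed by $\pi_1(M)$, together with the need to keep track that the Verdier-duality identifications $\bD_{\Bun_M}$ and $\bD_{\Bun^\mu_M}$ used in defining $\Eis_{\on{co},*}$ versus $\Eis^\mu_{\on{co},*}$ agree under restriction to components; but this is exactly the content of the remark in \secref{ss:dual} that "similar constructions and notation apply" componentwise, so it is a bookkeeping matter rather than a substantive one. Everything else is a direct invocation of \lemref{l:conj} and the adjunction $(\Eis^\mu_!,\on{CT}^\mu_*)$.
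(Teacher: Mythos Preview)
Your proof is correct and follows the paper's approach: the paper simply states ``Note that by \lemref{l:conj}, we have:'' and then gives the corollary, so your argument is exactly the intended one, just with the hypothesis verification spelled out. The one minor simplification available is that for the first isomorphism you need not pass through the direct-sum bookkeeping: since $(\Eis_!,\on{CT}_*)$ is itself a continuous adjoint pair (this is the lemma immediately preceding \secref{sss:Eis *}), \lemref{l:conj} applies directly to give $(\Eis_!)^{\on{op}}\simeq(\on{CT}_*)^\vee=\Eis_{\on{co},*}$.
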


\sssec{}

We define the functor
$$\on{CT}^\mu_{\on{co},*}:\Dmod(\Bun_G)_{\on{co}}\to \Dmod(\Bun^\mu_M)_{\on{co}}$$
as
$$\on{CT}^\mu_{\on{co},*}\simeq (\Eis^\mu_*)^\vee.$$

\medskip

We define 
$$\on{CT}_{\on{co},*}:\Dmod(\Bun_G)_{\on{co}}\to \Dmod(\Bun_M)_{\on{co}}$$
as 
$$\on{CT}_{\on{co},*}:=\underset{\mu}\bigoplus\, \on{CT}^\mu_{\on{co},*}\simeq (\Eis_*)^\vee.$$

\medskip

From \lemref{l:conj}, we obtain:

\begin{cor}
There is a canonical isomorphism
$$\on{CT}^\mu_{\on{co},*}\simeq (\on{CT}_!^\mu)^{\on{op}}.$$
\end{cor}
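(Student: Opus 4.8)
The plan is to deduce this from \lemref{l:conj}, applied to the adjoint pair $(\on{CT}^\mu_!, \Eis^\mu_*)$, in exact parallel with the way \corref{c:Eis op} was deduced from the pair $(\Eis^\mu_!, \on{CT}^\mu_*)$. Recall that $\on{CT}^\mu_!$ is \emph{by definition} the left adjoint of $\Eis^\mu_*$, so we are in the setting of \secref{ss:dual of CT} with $\bC_1 = \Dmod(\Bun_G)$, $\bC_2 = \Dmod(\Bun^\mu_M)$, $\sF = \on{CT}^\mu_!$ and $\sG = \Eis^\mu_*$.

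First I would check the hypotheses needed to form the conjugate functor $(\on{CT}^\mu_!)^{\on{op}}$ and to apply \lemref{l:conj}: namely, that $\bC_1 = \Dmod(\Bun_G)$ is compactly generated, and that the right adjoint $\sG = \Eis^\mu_*$ is continuous (equivalently, that $\sF = \on{CT}^\mu_!$ preserves compactness). Compact generation of $\Dmod(\Bun_G)$ is the main result of \cite{DrGa2}; it is in any case already implicit in \secref{ss:dual}, since it underlies the dualizability of $\Dmod(\Bun_G)_{\on{co}}$ and the identification \eqref{e:dual of co}. Continuity of $\Eis^\mu_* = \sfp_*\circ \sfq^!$ is immediate: $\sfq$ is smooth, so $\sfq^!$ is a cohomological shift of $\sfq^*$ and hence continuous, while $\sfp_*$ is continuous because de Rham pushforward of D-modules is always continuous. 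The existence of the left adjoint $\on{CT}^\mu_!$ is, of course, \thmref{t:weird adj}.

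Granting this, \lemref{l:conj} supplies a canonical isomorphism $(\on{CT}^\mu_!)^{\on{op}}\simeq (\Eis^\mu_*)^\vee$ of functors $\Dmod(\Bun_G)^\vee \to \Dmod(\Bun^\mu_M)^\vee$. Composing with the definition $\on{CT}^\mu_{\on{co},*}\simeq (\Eis^\mu_*)^\vee$ — that is, transporting $(\Eis^\mu_*)^\vee$ along the dualities \eqref{e:dual of co} for $\Bun_G$ and for $\Bun^\mu_M$ — yields the asserted isomorphism $\on{CT}^\mu_{\on{co},*}\simeq (\on{CT}^\mu_!)^{\on{op}}$, and one checks that it is compatible with the $\mu$-decompositions.

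I do not expect any genuine obstacle: the statement is a purely formal consequence of \lemref{l:conj} and the definitions of \secref{ss:dual of CT}. The only point worth spelling out is the verification above that the pair $(\on{CT}^\mu_!, \Eis^\mu_*)$ meets the continuity and compact-generation hypotheses under which the conjugate functor $(\on{CT}^\mu_!)^{\on{op}}$ is defined in the first place.
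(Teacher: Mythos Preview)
Your argument is correct and is exactly the paper's approach: the corollary is stated immediately after the definition $\on{CT}^\mu_{\on{co},*}:=(\Eis^\mu_*)^\vee$ with the one-line justification ``From \lemref{l:conj}, we obtain:'', applied to the adjoint pair $(\on{CT}^\mu_!,\Eis^\mu_*)$. Your write-up simply spells out the hypothesis checks (compact generation of $\Dmod(\Bun_G)$ and continuity of $\Eis^\mu_*$) that the paper leaves implicit; one small slip is that the relevant setup is the subsection on dual, adjoint and conjugate functors, not \secref{ss:dual of CT}.
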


\sssec{}

Define also
$$\on{CT}^\mu_{\on{co},?}:=(\Eis^\mu_!)^\vee \text{ and } \on{CT}_{\on{co},?}:=(\Eis_!)^\vee\simeq \underset{\mu}\bigoplus\, \on{CT}^\mu_{\on{co},?},$$
$$\Eis^\mu_{\on{co},?}:=(\on{CT}^\mu_!)^\vee \text{ and } \Eis_{\on{co},?}:=(\on{CT}_!)^\vee\simeq \underset{\mu}\bigoplus\, \Eis^\mu_{\on{co},?}.$$

By \secref{sss:dual and adj}, we obtain the following pairs of adjoint functors
$$\Eis^\mu_{\on{co},*}:\Dmod(\Bun^\mu_M)_{\on{co}}\rightleftarrows \Dmod(\Bun_G)_{\on{co}}:\on{CT}^\mu_{\on{co},?},$$
$$\Eis_{\on{co},*}:\Dmod(\Bun_M)_{\on{co}}\rightleftarrows \Dmod(\Bun_G)_{\on{co}}:\on{CT}_{\on{co},?},$$
and
$$\on{CT}^\mu_{\on{co},*}:\Dmod(\Bun_G)_{\on{co}} \rightleftarrows  \Dmod(\Bun^\mu_M)_{\on{co}}:\Eis^\mu_{\on{co},?}.$$

\sssec{}

Finally, from \thmref{t:weird adj}, we obtain:

\begin{cor}
There are canonical isomorphisms of functors
$$\Eis^\mu_{\on{co},?}\simeq \Eis^{\mu,-}_{\on{co},*}, \quad \Eis_{\on{co},?}\simeq \Eis^-_{\on{co},*}$$
and
$$\on{CT}^\mu_{\on{co},*} \simeq (\on{CT}^{\mu,-}_*)^{\on{op}}.$$
\end{cor}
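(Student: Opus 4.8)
The plan is to deduce all three isomorphisms by a purely formal manipulation from \thmref{t:weird adj}, in the form of its reformulation recorded above: the a priori partially-defined functor $\on{CT}^\mu_!$ (the left adjoint of $\Eis^\mu_*$) is everywhere defined and canonically isomorphic to $\on{CT}^{\mu,-}_*$. The only thing to keep in mind is that the two operations on functors that enter the definitions of the ``co'' functors, namely $\sF\mapsto \sF^\vee$ and $\sF\mapsto \sF^{\on{op}}$, are functorial in isomorphisms of functors (and compatible with direct sums), so that an isomorphism between two functors ``downstairs'' transports to an isomorphism between the associated ``co'' functors.

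Concretely, for the first isomorphism I would unwind the definitions: $\Eis^\mu_{\on{co},?} := (\on{CT}^\mu_!)^\vee$ by definition, while applying the definition of $\Eis_{\on{co},*}$ from \secref{sss:dual Eis} to the opposite parabolic $P^-$ gives $\Eis^{\mu,-}_{\on{co},*} \simeq (\on{CT}^{\mu,-}_*)^\vee$. Since $\Dmod(\Bun_G)$ and $\Dmod(\Bun^\mu_M)$ are dualizable and the functors $\on{CT}^\mu_!$ and $\on{CT}^{\mu,-}_*$ are continuous, the canonical isomorphism $\on{CT}^\mu_!\simeq \on{CT}^{\mu,-}_*$ of \thmref{t:weird adj} may be dualized, and applying $(-)^\vee$ to it yields $\Eis^{\mu,-}_{\on{co},*}\simeq \Eis^\mu_{\on{co},?}$. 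For the second isomorphism one then takes the direct sum over $\mu\in \pi_1(M)$, using the identifications $\Eis_{\on{co},?}\simeq \bigoplus_\mu \Eis^\mu_{\on{co},?}$ and $\Eis^-_{\on{co},*} \simeq \bigoplus_\mu \Eis^{\mu,-}_{\on{co},*}$; as in the proof that $\Eis_!$ is left adjoint to $\on{CT}_*$, this rests on the fact that for the functors at hand the $\bigoplus_\mu$ and $\prod_\mu$ constructions agree, so that dualization commutes with the sum.

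For the third isomorphism I would combine the previous corollary --- which, via \lemref{l:conj} applied to the adjoint pair $(\on{CT}^\mu_!,\Eis^\mu_*)$, identifies $\on{CT}^\mu_{\on{co},*}\simeq (\on{CT}^\mu_!)^{\on{op}}$ --- with the isomorphism $\on{CT}^\mu_!\simeq \on{CT}^{\mu,-}_*$ of \thmref{t:weird adj}, to which one applies the conjugation operation $(-)^{\on{op}}$, obtaining $(\on{CT}^\mu_!)^{\on{op}}\simeq (\on{CT}^{\mu,-}_*)^{\on{op}}$. For the right-hand side of this last isomorphism to be legitimate one must check that $\on{CT}^{\mu,-}_*$ lies in the situation where the conjugate functor is defined, i.e. that it preserves compact objects; this holds because it is isomorphic to $\on{CT}^\mu_!$, whose right adjoint $\Eis^\mu_*$ is continuous, and preservation of compactness is equivalent to continuity of the right adjoint.

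I do not expect a substantial obstacle: the argument is bookkeeping with duals, conjugates and direct sums of functors, all resting on the already-available \thmref{t:weird adj}. The one place that needs a word of justification is the use of $(-)^{\on{op}}$ above, which is licensed precisely by the continuity of $\Eis^\mu_*$ (equivalently, by $\on{CT}^\mu_!$, and hence $\on{CT}^{\mu,-}_*$, preserving compactness); beyond that, the only care required is to apply each of $(-)^\vee$ and $(-)^{\on{op}}$ to functors for which it is actually defined, and to remember the $\bigoplus$-versus-$\prod$ subtlety when passing between the $\mu$-graded and non-graded statements.
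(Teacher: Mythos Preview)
Your proposal is correct and matches the paper's approach: the paper offers no proof beyond the phrase ``from \thmref{t:weird adj}, we obtain,'' and your argument is precisely the intended unwinding --- dualize the isomorphism $\on{CT}^\mu_!\simeq \on{CT}^{\mu,-}_*$ to get the first two isomorphisms, and combine it with the preceding corollary $\on{CT}^\mu_{\on{co},*}\simeq (\on{CT}^\mu_!)^{\on{op}}$ for the third. Your remark that $(-)^{\on{op}}$ is legitimately applied to $\on{CT}^{\mu,-}_*$ (via preservation of compacts, equivalently continuity of $\Eis^\mu_*$) is a useful point of care; the $\bigoplus$-versus-$\prod$ discussion is slightly more than needed, since the non-graded statement also follows directly by dualizing $\on{CT}_!\simeq \on{CT}^-_*$, but it does no harm.
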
 

To summarize, we also obtain an adjunction
$$\on{CT}^\mu_{\on{co},*}:\Dmod(\Bun_G)_{\on{co}} \rightleftarrows  \Dmod(\Bun^\mu_M)_{\on{co}}:\Eis^{\mu,-}_{\on{co},*}.$$

\sssec{}

We can ask the following question: does the functor $\on{CT}^\mu_{\on{co},*}$ admit a \emph{left} adjoint? The answer
is ``no":

\begin{proof}
If $\on{CT}^\mu_{\on{co},*}$ had admitted a left adjoint, by \secref{sss:dual and adj}, the functor $\Eis^\mu_*$ would have admitted
a \emph{continuous} right adjoint. However, this is not the case, since the functor 
$$\Eis^\mu_*:\Dmod(\Bun^\mu_M)\to \Dmod(\Bun_G)$$
\emph{does not} preserve compactness. 
\end{proof}

\ssec{Explicit description of the dual functors}    \label{ss:dual of CT}

\sssec{}

For $(U\overset{j}\hookrightarrow \Bun_G) \in \on{op-qc}(G)$ we consider the functor
$j^*:\Dmod(\Bun_G)\to \Dmod(U)$, and its right adjoint $j_*$.

\medskip

Define 
$$j^*_{\on{co}}:\Dmod(\Bun_G)_{\on{co}}\to \Dmod(U)$$
as 
$$j^*_{\on{co}}:=(j_*)^\vee.$$

By \secref{sss:dual and adj}, the functors
$$j^*_{\on{co}}:\Dmod(\Bun_G)_{\on{co}}\rightleftarrows  \Dmod(U):j_{\on{co},*}$$
form an adjoint pair, where $j_{\on{co},*}$ is as in \eqref{e:j co}.

\begin{lem} \label{l:j ff}
The functor $j_{\on{co},*}$ is fully faithful.
\end{lem}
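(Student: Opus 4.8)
The plan is to identify $j_{\on{co},*}$ with the image of a structure map into the colimit defining $\Dmod(\Bun_G)_{\on{co}}$, and to check full faithfulness by producing a one-sided inverse on the nose. Concretely, $\Dmod(\Bun_G)_{\on{co}}$ is the colimit of the $\Dmod(U')$, $U'\in\on{op-qc}(G)$, along the pushforward functors $(j_{U',U''})_*$; since $U$ itself is an object of $\on{op-qc}(G)$, the functor $j_{\on{co},*}$ is exactly the canonical map from the $U$-term to the colimit. For a colimit of DG categories taken along functors that admit continuous right adjoints (here the right adjoints are the $*$-restrictions $j_{U',U''}^*$, which are continuous), the standard formalism (see \cite[Sect. 1.7]{DrGa2}, or the general discussion of colimits via right adjoints) computes the colimit as the limit of the same diagram along the right adjoints, and the structure functor into the colimit is computed termwise. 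So the first step is to invoke this: under $\Dmod(\Bun_G)_{\on{co}}\simeq \underset{U'}{\underset{\longleftarrow}{\lim}}\,\Dmod(U')$ with transition maps $j_{U',U''}^*$, the functor $j_{\on{co},*}:\Dmod(U)\to \Dmod(\Bun_G)_{\on{co}}$ sends $\CF\in\Dmod(U)$ to the compatible system whose $U'$-component, for $U'\subset U$, is $j_{U',U}^*(\CF)$, and more generally is $(j_{U\cap U'\to U'})_*\circ (j_{U\cap U'\to U})^*(\CF)$.

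The second step is to exhibit a retraction. The evaluation functor at the term $U$ itself, call it $\on{ev}_U:\Dmod(\Bun_G)_{\on{co}}\to\Dmod(U)$, satisfies $\on{ev}_U\circ j_{\on{co},*}\simeq \on{id}_{\Dmod(U)}$, because in the limit description the $U$-component of $j_{\on{co},*}(\CF)$ is $j_{U,U}^*(\CF)=\CF$. Since a functor that admits a left (or right) inverse is automatically fully faithful, this already gives the claim — provided we have correctly identified $j_{\on{co},*}$ with the structure map of the colimit and $\on{ev}_U$ with the corresponding projection of the limit. Alternatively, and perhaps more in keeping with the duality language of the paper, one can argue on the dual side: by \secref{sss:dual and adj} the pair $(j^*_{\on{co}},j_{\on{co},*})$ is adjoint, and $j_{\on{co},*}$ is fully faithful iff the unit $\on{id}\to j^*_{\on{co}}\circ j_{\on{co},*}$ is an isomorphism; dualizing, $(j^*_{\on{co}})^\vee\simeq j_*$ and $(j_{\on{co},*})^\vee\simeq j^*$ (the identification recorded just before the lemma), and the composite $j^*\circ j_*\simeq \on{id}_{\Dmod(U)}$ is an isomorphism precisely because $j_*$ is fully faithful — which in turn holds because $j$ is an open embedding, so $j^*\circ j_*\simeq\on{id}$ by base change / the usual properties of $(j^*,j_*)$ for an open immersion. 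Full faithfulness of a continuous functor between dualizable categories is equivalent to full faithfulness of its dual (both are detected by the unit being an isomorphism, and dualization is an involutive anti-equivalence on such functors), so we are done.

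I expect the only real point requiring care — the ``main obstacle'' — is the bookkeeping in the first step: justifying that the colimit \eqref{e:DBunG dual as colimit} along $*$-pushforwards can be rewritten as the limit \eqref{e:DBunG as limit}-style diagram along $*$-pullbacks, and that under this rewriting the tautological functor $j_{\on{co},*}$ really is the ``insertion of the $U$ term'' whose $U'$-component is the base-changed functor $(j_{U\cap U'\to U'})_*(j_{U\cap U'\to U})^*$. This is exactly the kind of statement proved in \cite[Sect. 1.7, 4.3]{DrGa2}, so in the write-up I would cite it rather than reprove it; everything after that is formal. The second approach (dualizing to $j_*$ fully faithful for an open immersion) sidesteps even this, at the cost of invoking that full faithfulness is preserved under passing to dual functors, which is again standard DG-categorical formalism from \cite[Sect. 1]{DrGa1}.
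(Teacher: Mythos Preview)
Your second approach---dualizing the co-unit of the $(j^*_{\on{co}},j_{\on{co},*})$-adjunction to the isomorphism $j^*\circ j_*\simeq \on{id}$, which holds because $j_*$ is fully faithful for an open immersion---is exactly the paper's proof. One cosmetic correction: since $j^*_{\on{co}}$ is the \emph{left} adjoint, the relevant map is the co-unit $j^*_{\on{co}}\circ j_{\on{co},*}\to\on{id}$, not a unit going the other way; this does not affect the substance.

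Your first approach, however, has a genuine gap. The assertion that ``a functor that admits a left (or right) inverse is automatically fully faithful'' is false: a retraction only shows that the map on Hom-spaces is a split monomorphism. A DG counterexample is restriction along $k[t]\twoheadrightarrow k$ (so $t$ acts by $0$), with retraction given by restriction along $k\hookrightarrow k[t]$; here $\Hom_{k[t]}(k,k)$ is strictly larger than $\Hom_k(k,k)$. The fix is to note that in the colimit-as-limit description your functor $\on{ev}_U$ is the \emph{right adjoint} of $j_{\on{co},*}$, so the isomorphism $\on{ev}_U\circ j_{\on{co},*}\simeq\on{id}$ is the unit of that adjunction being invertible, which is precisely full faithfulness of the left adjoint. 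But once you invoke the adjunction you are back to the paper's argument in different clothing.
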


\begin{proof}
We need to show that the co-unit of the adjunction
$$j^*_{\on{co}}\circ j_{\on{co},*}\to \on{Id}_{\Dmod(U)}$$
is an isomorphism. But this follows from the fact that the corresponding map between
the dual functors, i.e.,
$$j^*\circ j_*\to  \on{Id}_{\Dmod(U)},$$
is an isomorphism (the latter because $j_*:\Dmod(U)\to \Dmod(\Bun_G)$ is fully faithful).
\end{proof} 

\sssec{}

By the definition of $\Dmod(\Bun_G)_{\on{co}}$, the functor $j^*_{\on{co}}$ amounts
to a compatible family of functors
$$j^*_{\on{co}}\circ (j_1)_{\on{co},*}:\Dmod(U_1)\to \Dmod(U)$$
for $(U_1\overset{j_1}\hookrightarrow \Bun_G) \in \on{op-qc}(G)$.

\medskip

It is easy to see from the definitions that
$$j^*_{\on{co}}\circ (j_1)_{\on{co},*}\simeq (j'_1)_*\circ (j')^*,$$
where
$$
\CD
U\cap U_1 @>{j'_1}>>  U  \\
@V{j'}VV  @VV{j}V  \\
U_1  @>{j_1}>>  \Bun_G.
\endCD
$$

\sssec{}

Again, by the definition of the category $\Dmod(\Bun_M)_{\on{co}}$, the functor
$\Eis_{\on{co},*}$ amounts to a compatible family of functors
$$\Eis_{\on{co},*}\circ (j_M)_{\on{co},*}:\Dmod(U_M)\to \Dmod(\Bun_G)_{\on{co}}$$
for $(U_M\overset{j_M}\hookrightarrow \Bun_M) \in \on{op-qc}(M)$.

\medskip

We now claim:

\begin{prop}  \label{p:Eis co expl}
For a given $(U_M\overset{j_M}\hookrightarrow \Bun_M) \in \on{op-qc}(M)$, let 
$(U_G\overset{j_G}\hookrightarrow \Bun_G) \in \on{op-qc}(G)$ be such that
$$\sfp(\sfq^{-1}(U_M))\subset U_G.$$
Then there is a canonical isomorphism
$$\Eis_{\on{co},*}\circ (j_M)_{\on{co},*}\simeq (j_G)_{\on{co},*}\circ (j_G)^*\circ \Eis_*\circ (j_M)_*:\quad
\Dmod(U_M)\to \Dmod(\Bun_G)_{\on{co}}.$$
\end{prop}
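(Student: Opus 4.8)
The plan is to unwind the definition $\Eis_{\on{co},*}=(\on{CT}_*)^\vee$ and the definitions of the functors $(j_M)_{\on{co},*}$, $(j_G)_{\on{co},*}$, and match both sides after passing to duals, where everything becomes a statement about the ordinary functors $\on{CT}_*$, $j^*$, $j_*$ connected by base change. Concretely, I would first recall from \secref{ss:dual of CT} that to give a functor out of $\Dmod(\Bun_M)_{\on{co}}=\underset{U_M}{\on{colim}}\,\Dmod(U_M)$ is to give a compatible family indexed by $U_M\in\on{op-qc}(M)$; so it suffices to produce, for each such $U_M$ (and then check compatibility), an isomorphism between the two composites $\Dmod(U_M)\to\Dmod(\Bun_G)_{\on{co}}$. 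Both composites are continuous functors between dualizable categories, so by \eqref{e:dual of co} (and its $M$-analogue) it is equivalent to exhibit an isomorphism of the dual functors $\Dmod(\Bun_G)^\vee\to\Dmod(\Bun_M)^\vee$, i.e. $\Dmod(\Bun_G)\to\Dmod(U_M)$ under Verdier duality on the quasi-compact $U_M$.

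Next I would compute these two dual functors. The dual of $\Eis_{\on{co},*}\circ(j_M)_{\on{co},*}$ is, by \secref{sss:dual and adj} and the identification $(j_{\on{co},*})^\vee\simeq j^*$ from \eqref{e:dual of co}, the composite $(j_M)^*\circ\on{CT}_*:\Dmod(\Bun_G)\to\Dmod(U_M)$. For the right-hand side, the dual of $(j_G)_{\on{co},*}\circ(j_G)^*\circ\Eis_*\circ(j_M)_*$ is the composite of the duals in reverse order: $((j_M)_*)^\vee\simeq (j_M)^*$ is not quite what appears — rather, dualizing $(j_M)_*:\Dmod(U_M)\to\Dmod(\Bun_M)$ gives a functor $\Dmod(\Bun_M)^\vee\to\Dmod(U_M)^\vee$ which under Verdier duality is again $(j_M)^*$; dualizing $\Eis_*$ gives $(\Eis_*)^\vee=\on{CT}_{\on{co},*}$, whose further identification with the relevant ordinary functor I must track carefully; dualizing $(j_G)^*$ gives $(j_G)_*$ up to the co/naive bookkeeping; and dualizing $(j_G)_{\on{co},*}$ gives $(j_G)^*$. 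So I would assemble the dual right-hand side as $(j_M)^*\circ\on{CT}_*\circ (j_G)_*\circ(j_G)^*$, and the claim reduces to showing that $\on{CT}_*$ applied to $(j_G)_*\circ(j_G)^*\CF$ agrees, after restriction along $j_M$, with $\on{CT}_*(\CF)$ restricted along $j_M$. This is where the hypothesis $\sfp(\sfq^{-1}(U_M))\subset U_G$ enters: the unit $\CF\to (j_G)_*(j_G)^*\CF$ has cone supported away from $U_G$, hence $\sfp^!$ of it has cone supported away from $\sfp^{-1}(U_G)\supset \sfq^{-1}(U_M)$, so $\sfq_*$ of it, restricted to $U_M$, is an isomorphism by base change along the square defining $\sfq^{-1}(U_M)\hookrightarrow\Bun_P$ and proper-ish/smooth compatibility of $\sfq_*$ with restriction. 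In other words $(j_M)^*\circ\on{CT}^\mu_*\circ(j_G)_*\circ(j_G)^*\simeq (j_M)^*\circ\on{CT}^\mu_*$ on the nose, and summing over $\mu$ gives the result for $\on{CT}_*$.

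The main obstacle I anticipate is bookkeeping the duality identifications correctly — specifically, making sure that the chain $(j_G)_{\on{co},*}\circ(j_G)^*\circ\Eis_*\circ(j_M)_*$ dualizes to the expression I claimed, since each of $\on{CT}_{\on{co},*}$, $(j_G)^*_{\on{co}}$, $j_{\on{co},*}$ is \emph{defined} as a dual of something, and one has to invoke \lemref{l:conj} / \lemref{l:j ff} and the explicit base-change formula $j^*_{\on{co}}\circ(j_1)_{\on{co},*}\simeq(j'_1)_*\circ(j')^*$ from \secref{ss:dual of CT} at the right moments rather than re-deriving them. Once the two dual functors are both massaged into the form $(j_M)^*\circ\on{CT}_*\circ(\text{something built from }U_G)$, the remaining geometric input — that $\on{CT}_*$ commutes with the relevant open restrictions under the support hypothesis — is a standard base-change argument using that $\sfq$ is schematic (so $\sfq_*$ satisfies base change against the open embedding $U_M\hookrightarrow\Bun_M$) and that $\sfp^!$ commutes with open restriction; I would also note that the compatibility of the family in $U_M$ is automatic because every arrow in sight is built from $*$-pullbacks and pushforwards along open embeddings, which satisfy the cocycle conditions tautologically.
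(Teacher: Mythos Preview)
Your approach is essentially the paper's: dualize and reduce to a base-change statement about $(j_M)^*\circ\on{CT}_*$ and the open $U_G$. The geometric input you describe (the cone of $\CF\to(j_G)_*(j_G)^*\CF$ is supported away from $\sfp^{-1}(U_G)\supset\sfq^{-1}(U_M)$, hence dies under $(j_M)^*\circ\sfq_*\circ\sfp^!$) is exactly what the paper uses.

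The obstacle you anticipate is real, and the paper resolves it by a cleaner organization into two separate steps. Rather than dualizing the entire right-hand side at once, the paper first dualizes only the isomorphism $(j_M)^*\circ\on{CT}_*\simeq(j_M)^*\circ\on{CT}_*\circ(j_G)_*\circ(j_G)^*$ to obtain $\Eis_{\on{co},*}\circ(j_M)_{\on{co},*}\simeq(j_G)_{\on{co},*}\circ(j_G)^*_{\on{co}}\circ\Eis_{\on{co},*}\circ(j_M)_{\on{co},*}$; then, separately, it identifies $(j_G)^*_{\on{co}}\circ\Eis_{\on{co},*}\circ(j_M)_{\on{co},*}\simeq(j_G)^*\circ\Eis_*\circ(j_M)_*$ as functors $\Dmod(U_M)\to\Dmod(U_G)$ between Verdier self-dual categories, by computing both as the pull-push along the restricted diagram $U_G\leftarrow U_P\to U_M$ with $U_P:=\sfq^{-1}(U_M)$. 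This second step is exactly the Verdier-dual computation $((j_M)^*\circ\on{CT}_*\circ(j_G)_*)^\vee\simeq(j_G)^*\circ\Eis_*\circ(j_M)_*$, and it sidesteps the problem you ran into: if you dualize the four-fold composite termwise, the duals of $\Eis_*$ and of $(j_G)^*$ land in the ``co'' categories (giving $\on{CT}_{\on{co},*}$ and $(j_G)_{\on{co},*}$), not in the ordinary ones, so your expression $(j_M)^*\circ\on{CT}_*\circ(j_G)_*\circ(j_G)^*$ does not appear directly. Grouping the three middle functors as a single functor between quasi-compact stacks and using Verdier self-duality there is the move that makes the bookkeeping transparent.
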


\begin{proof}

First, we claim that there is a canonical isomorphism
\begin{equation} \label{e:Eis j}
\Eis_{\on{co},*}\circ (j_M)_{\on{co},*}\simeq
(j_G)_{\on{co},*}\circ (j_G)_{\on{co}}^*\circ \Eis_{\on{co},*}\circ (j_M)_{\on{co},*}.
\end{equation}

Indeed, \eqref{e:Eis j} follows by passing to dual functors in the isomorphism
$$(j_M)^*\circ \on{CT}_*\simeq (j_M)^*\circ \on{CT}_*\circ (j_G)_*\circ (j_G)^*,$$
where the latter follows by base change from the definition.

\medskip

Hence, it remains to establish a canonical isomorphism of functors
$$(j_G)_{\on{co}}^*\circ \Eis_{\on{co},*}\circ (j_M)_{\on{co},*}\simeq (j_G)^*\circ \Eis_*\circ (j_M)_*,\quad
\Dmod(U_M)\to \Dmod(U_G),$$
i.e., an isomorphism
$$((j_M)^*\circ \on{CT}_*\circ (j_G)_*)^\vee \simeq (j_G)^*\circ \Eis_*\circ (j_M)_*.$$

However, the latter amounts to pull-push along the diagram
$$
\xy
(-15,0)*+{U_G}="X";
(15,0)*+{U_M}="Y";
(0,15)*+{U_P}="Z";
(-35,0)*+{\Bun_G}="W";
(35,0)*+{\Bun_M,}="U";
{\ar@{->}_{\sfp|_{U_P}} "Z";"X"};
{\ar@{->}^{\sfq|_{U_P}} "Z";"Y"};
{\ar@{_{(}->}_{j_P} "X";"W"};
{\ar@{^{(}->}^{j_M} "Y";"U"};
\endxy
$$
where $U_P:=\sfq^{-1}(U_M)$. 

\end{proof}

\sssec{}

The functor $\on{CT}_{\on{co},*}$ amounts to a compatible family of functors
$$\on{CT}_{\on{co},*}\circ (j_G)_{\on{co},*}:\Dmod(U_G)\to \Dmod(\Bun_M)_{\on{co}}$$
for $(U_G\overset{j_G}\hookrightarrow \Bun_G) \in \on{op-qc}(G)$.

\medskip

In a similar way to \propref{p:Eis co expl}, we have:

\begin{prop}  \label{p:CT co expl}
For a given $(U_G\overset{j_G}\hookrightarrow \Bun_G) \in \on{op-qc}(G)$, let 
$(U_M\overset{j_M}\hookrightarrow \Bun_M) \in \on{op-qc}(M)$ be such that
$$\sfq(\sfp^{-1}(U_G))\subset U_M.$$
Then there is a canonical isomorphism
$$\on{CT}_{\on{co},*}\circ (j_G)_{\on{co},*}\simeq (j_M)_{\on{co},*}\circ (j_M)^*\circ \on{CT}_*\circ (j_G)_*:\quad
\Dmod(U_G)\to \Dmod(\Bun_M)_{\on{co}}.$$
\end{prop}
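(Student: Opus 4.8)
The plan is to run the argument of \propref{p:Eis co expl} with the roles of $\Bun_G$ and $\Bun_M$, and of the Eisenstein and constant-term functors, interchanged, keeping careful track of the duality dictionary of \secref{ss:dual} relating $j^*,j_*$ to $j^*_{\on{co}},j_{\on{co},*}$. Throughout I would set $U_P:=\sfp^{-1}(U_G)$, so that the hypothesis $\sfq(\sfp^{-1}(U_G))\subset U_M$ says exactly that $\sfq$ restricts to $\sfq|_{U_P}:U_P\to U_M$, and of course $\sfp$ restricts to $\sfp|_{U_P}:U_P\to U_G$.

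\medskip

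The first step is a canonical isomorphism
$$\on{CT}_{\on{co},*}\circ (j_G)_{\on{co},*}\simeq (j_M)_{\on{co},*}\circ (j_M)^*_{\on{co}}\circ \on{CT}_{\on{co},*}\circ (j_G)_{\on{co},*},$$
which expresses the fact that $\on{CT}_{\on{co},*}\circ (j_G)_{\on{co},*}$ takes values in the essential image of $(j_M)_{\on{co},*}$ (i.e.\ is ``supported on $U_M$''). Passing to dual functors --- using $\on{CT}_{\on{co},*}\simeq (\Eis_*)^\vee$, $(j_{\on{co},*})^\vee\simeq j^*$ and $(j^*_{\on{co}})^\vee\simeq j_*$ --- this is equivalent to the isomorphism
$$(j_G)^*\circ \Eis_*\simeq (j_G)^*\circ \Eis_*\circ (j_M)_*\circ (j_M)^*$$
of functors $\Dmod(\Bun_M)\to \Dmod(U_G)$, which follows from base change: writing $\Eis_*=\sfp_*\circ \sfq^!$ and using base change along the open immersion $j_G$ together with $(j_M)^*\simeq (j_M)^!$, one gets $(j_G)^*\circ \Eis_*\simeq (\sfp|_{U_P})_*\circ (\sfq|_{U_P})^!\circ (j_M)^*$, which visibly factors through $(j_M)^*$, so that the unit $\on{Id}\to (j_M)_*\circ (j_M)^*$ induces the desired isomorphism.

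\medskip

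The second step is a canonical isomorphism
$$(j_M)^*_{\on{co}}\circ \on{CT}_{\on{co},*}\circ (j_G)_{\on{co},*}\simeq (j_M)^*\circ \on{CT}_*\circ (j_G)_*$$
of functors $\Dmod(U_G)\to \Dmod(U_M)$; composing it on the left with $(j_M)_{\on{co},*}$ and combining with the first step gives the proposition. By the same dualization, the left-hand side is $((j_G)^*\circ \Eis_*\circ (j_M)_*)^\vee$, and now both sides are computed by base change along the Cartesian squares formed from $j_G,j_M,\sfp,\sfq$: one finds $(j_M)^*\circ \on{CT}_*\circ (j_G)_*\simeq (\sfq|_{U_P})_*\circ (\sfp|_{U_P})^!$, while $(j_G)^*\circ \Eis_*\circ (j_M)_*\simeq (\sfp|_{U_P})_*\circ (\sfq|_{U_P})^!$, whose dual, by $(f_*)^\vee\simeq f^!$ and $(f^!)^\vee\simeq f_*$ applied to $\sfp|_{U_P}$ and $\sfq|_{U_P}$, is again $(\sfq|_{U_P})_*\circ (\sfp|_{U_P})^!$ --- i.e.\ pull-push along the restricted diagram $U_G\xleftarrow{\sfp|_{U_P}} U_P\xrightarrow{\sfq|_{U_P}} U_M$, exactly as in \propref{p:Eis co expl}.

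\medskip

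The geometric ingredients --- base change for open immersions and the identities $(f_*)^\vee\simeq f^!$, $(f^!)^\vee\simeq f_*$ --- are routine. What needs care, precisely as in \propref{p:Eis co expl}, is the organizational part: one must verify that the isomorphisms constructed above are natural in $U_G$ (with respect to the transition maps $(j_{1,2})_*$ defining the colimit $\Dmod(\Bun_G)_{\on{co}}$) and are independent, up to canonical isomorphism, of the auxiliary open $U_M$, so that they glue to a single isomorphism of functors $\Dmod(U_G)\to \Dmod(\Bun_M)_{\on{co}}$ compatible with the presentations of $\on{CT}_{\on{co},*}$ and $\on{CT}_*$. I expect this bookkeeping, rather than any individual computation, to be the real obstacle.
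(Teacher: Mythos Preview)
Your proposal is correct and follows precisely the approach the paper indicates: the paper does not write out a proof of \propref{p:CT co expl} but simply says ``In a similar way to \propref{p:Eis co expl}, we have,'' and your two-step argument---first dualizing to reduce to the base-change isomorphism $(j_G)^*\circ \Eis_*\simeq (j_G)^*\circ \Eis_*\circ (j_M)_*\circ (j_M)^*$, then identifying both sides of the second step with pull-push along the restricted diagram $U_G\leftarrow U_P\to U_M$---is exactly that similar argument with the roles of $G$ and $M$, and of $\Eis_*$ and $\on{CT}_*$, swapped.
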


\section{Interaction with the naive pseudo-identity and cuspidality}  \label{s:naive}

\ssec{The naive pseudo-identity functor}

\sssec{}

The following functor
$$\psId_{\Bun_G,\on{naive}}:\Dmod(\Bun_G)_{\on{co}}\to \Dmod(\Bun_G)$$
was introduced in \cite[Sect. 4.4.2]{DrGa2}:

\medskip

For $(U_G\overset{j_G}\hookrightarrow \Bun_G) \in \on{op-qc}(G)$, the composition
$$\psId_{\Bun_G,\on{naive}}\circ j_{\on{co},*}:\Dmod(U_G)\to \Dmod(\Bun_G)$$
is by definition the functor $j_*$.

\begin{rem}
The functor $\psId_{\Bun_G,\on{naive}}$ is very far from being an equivalence, unless $G$ is a torus. 
For example, in \cite[Theorem 7.7.2]{Ga2}, a particular object of $\Dmod(\Bun_G)_{\on{co}}$ was constructed,
which belongs to $\on{ker}(\psId_{\Bun_G,\on{naive}})$, as soon as the semi-simple part of $G$ is non-trivial. 
\end{rem}

\sssec{}  \label{sss:naive by kernel}

Recall the equivalence:
\begin{multline*}  
\on{Funct}_{\on{cont}}(\Dmod(\Bun_G)_{\on{co}},\Dmod(\Bun_G))\simeq
(\Dmod(\Bun_G)_{\on{co}})^\vee\otimes \Dmod(\Bun_G)\simeq  \\
\simeq \Dmod(\Bun_G)\otimes \Dmod(\Bun_G)\simeq \Dmod(\Bun_G\times \Bun_G).
\end{multline*}

\medskip

According to \cite[Sect. 4.4.3]{DrGa2}, the functor $\psId_{\Bun_G,\on{naive}}$ corresponds to the object
$$(\Delta_{\Bun_G})_*(\omega_{\Bun_G})\in \Dmod(\Bun_G\times \Bun_G),$$
where $\Delta_{\Bun_G}$ denotes the diagonal morphism on $\Bun_G$, and $\omega_\CY$ is the dualizing object
on a stack $\CY$ (we take $\CY=\Bun_G$).

\medskip

From here we obtain:

\begin{lem} \label{l:naive self-dual} There exists a canonical isomorphism
$\psId_{\Bun_G,\on{naive}}^\vee\simeq \psId_{\Bun_G,\on{naive}}$.
\end{lem}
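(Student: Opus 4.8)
The plan is to identify $\psId_{\Bun_G,\on{naive}}^\vee$ with $\psId_{\Bun_G,\on{naive}}$ at the level of integral kernels, exploiting the symmetry of the diagonal. Recall from \secref{sss:naive by kernel} that, under the chain of equivalences
$$\on{Funct}_{\on{cont}}(\Dmod(\Bun_G)_{\on{co}},\Dmod(\Bun_G))\simeq (\Dmod(\Bun_G)_{\on{co}})^\vee\otimes \Dmod(\Bun_G)\simeq \Dmod(\Bun_G)\otimes \Dmod(\Bun_G)\simeq \Dmod(\Bun_G\times \Bun_G),$$
the functor $\psId_{\Bun_G,\on{naive}}$ corresponds to the object $(\Delta_{\Bun_G})_*(\omega_{\Bun_G})$, where the middle identification uses $\bD_{\Bun_G}$ of \eqref{e:dual of co} together with the biduality of the compactly generated category $\Dmod(\Bun_G)$.

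Next I would invoke the general principle that dualization of functors corresponds to transposition of kernels: for dualizable DG categories $\bD_1,\bD_2$, the assignment $F\mapsto F^\vee$ matches, under the identifications $\on{Funct}_{\on{cont}}(\bD_1,\bD_2)\simeq \bD_1^\vee\otimes \bD_2$ and $\on{Funct}_{\on{cont}}(\bD_2^\vee,\bD_1^\vee)\simeq (\bD_2^\vee)^\vee\otimes \bD_1^\vee$, the swap of tensor factors composed with the canonical biduality $\bD_2\simeq (\bD_2^\vee)^\vee$. Applying this with $\bD_1=\Dmod(\Bun_G)_{\on{co}}$ and $\bD_2=\Dmod(\Bun_G)$, and transporting everything back to $\Dmod(\Bun_G\times\Bun_G)$ via $\bD_{\Bun_G}$ and biduality as above, one obtains that $\psId_{\Bun_G,\on{naive}}^\vee$, viewed again as a functor $\Dmod(\Bun_G)_{\on{co}}\to \Dmod(\Bun_G)$, has integral kernel $\sigma_*\bigl((\Delta_{\Bun_G})_*(\omega_{\Bun_G})\bigr)$, where $\sigma:\Bun_G\times\Bun_G\to\Bun_G\times\Bun_G$ is the transposition.

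To conclude, observe that $\sigma\circ\Delta_{\Bun_G}=\Delta_{\Bun_G}$, whence $\sigma_*\bigl((\Delta_{\Bun_G})_*(\omega_{\Bun_G})\bigr)\simeq (\Delta_{\Bun_G})_*(\omega_{\Bun_G})$ canonically. Since $\psId_{\Bun_G,\on{naive}}$ and $\psId_{\Bun_G,\on{naive}}^\vee$ then have the same integral kernel, they are canonically isomorphic.

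The step I expect to require the most care is the bookkeeping of the several duality identifications: one must check that the identification $(\Dmod(\Bun_G)_{\on{co}})^\vee\simeq \Dmod(\Bun_G)$ used to read off the kernel of $\psId_{\Bun_G,\on{naive}}$ is compatible, through biduality, with the identification $\Dmod(\Bun_G)^\vee\simeq \Dmod(\Bun_G)_{\on{co}}$ used to re-interpret $\psId_{\Bun_G,\on{naive}}^\vee$ as a functor between the same pair of categories. This ultimately reduces to the symmetry of Verdier self-duality on each quasi-compact open $U\subset\Bun_G$ and its compatibility with the presentations \eqref{e:DBunG as limit} and \eqref{e:DBunG dual as colimit}, which is standard. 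As a consistency check one may also verify directly that $\psId_{\Bun_G,\on{naive}}^\vee\circ j_{\on{co},*}\simeq j_*$ for every $(U\overset{j}\hookrightarrow\Bun_G)\in\on{op-qc}(G)$, using $(j_{\on{co},*})^\vee\simeq j^*$ from \eqref{e:dual of co} and the self-duality of $\Dmod(U)$.
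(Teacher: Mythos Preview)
Your proof is correct and follows essentially the same approach as the paper: both identify $\psId_{\Bun_G,\on{naive}}$ with its dual by observing that the kernel $(\Delta_{\Bun_G})_*(\omega_{\Bun_G})$ is invariant under the flip automorphism of $\Bun_G\times\Bun_G$. The paper's proof is a one-line statement of this fact, whereas you have carefully spelled out the bookkeeping about dualization of functors corresponding to transposition of kernels and the compatibility of the various identifications; this extra care is fine but not strictly needed.
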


\begin{proof}
This expresses the fact that $(\Delta_{\Bun_G})_*(\omega_{\Bun_G})$ is equivariant with respect to
the flip automorphism of $\Dmod(\Bun_G\times \Bun_G)$.
\end{proof} 

\begin{cor} \label{c:naive and res}
For  $(U_G\overset{j_G}\hookrightarrow \Bun_G) \in \on{op-qc}(G)$, we have a canonical isomorphism:
$$j^*\circ \psId_{\Bun_G,\on{naive}}\simeq j^*_{\on{co}}.$$
\end{cor}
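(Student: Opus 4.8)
The statement to prove is \corref{c:naive and res}: for $(U_G\overset{j_G}\hookrightarrow \Bun_G)\in \on{op-qc}(G)$, there is a canonical isomorphism $j^*\circ \psId_{\Bun_G,\on{naive}}\simeq j^*_{\on{co}}$ of functors $\Dmod(\Bun_G)_{\on{co}}\to \Dmod(U_G)$. The plan is to dualize: by \lemref{l:naive self-dual} we have $\psId_{\Bun_G,\on{naive}}^\vee\simeq \psId_{\Bun_G,\on{naive}}$, and by definition $j^*_{\on{co}}=(j_*)^\vee$, so it suffices to identify the dual of $j^*\circ \psId_{\Bun_G,\on{naive}}$ with $j_*$. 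Concretely, the composite $j^*\circ \psId_{\Bun_G,\on{naive}}:\Dmod(\Bun_G)_{\on{co}}\to \Dmod(U_G)$ has dual functor $(\psId_{\Bun_G,\on{naive}})^\vee\circ (j^*)^\vee$ mapping $\Dmod(U_G)^\vee\to \Dmod(\Bun_G)_{\on{co}}^\vee=\Dmod(\Bun_G)$. Under $\bD_{U_G}$ we have $(j^*)^\vee\simeq j_*$ (this is recorded in \secref{ss:dual}, from the identification $((j_{1,2})_*)^\vee\simeq (j_{1,2})^*$ applied to the inclusion of $U_G$), and under $\lemref{l:naive self-dual}$ the functor $(\psId_{\Bun_G,\on{naive}})^\vee$ is again $\psId_{\Bun_G,\on{naive}}$, whose precomposition with $j_*=j_{\on{co},*}$ is, by the very definition of $\psId_{\Bun_G,\on{naive}}$, the functor $j_*:\Dmod(U_G)\to\Dmod(\Bun_G)$. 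Hence the dual of $j^*\circ \psId_{\Bun_G,\on{naive}}$ is $j_*$, i.e. equals the dual of $j^*_{\on{co}}$, and since dualization of functors between dualizable categories is an equivalence, we conclude.

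\textbf{Steps, in order.} First I would spell out the duality identifications: record that $\Dmod(U_G)$ is dualizable via $\bD_{U_G}$, that $\Dmod(\Bun_G)_{\on{co}}$ is dualizable with $\Dmod(\Bun_G)_{\on{co}}^\vee\simeq \Dmod(\Bun_G)$ (this is \eqref{e:dual of co}), and that under these identifications $(j^*)^\vee\simeq j_*$ and $(j_{\on{co},*})^\vee\simeq j^*$ — all already in the excerpt. Second, I would observe that for continuous functors between dualizable categories, passing to the dual functor is an (anti)equivalence of functor categories, so an isomorphism $\sF^\vee\simeq \sG^\vee$ yields $\sF\simeq \sG$; thus it is enough to prove $(j^*\circ \psId_{\Bun_G,\on{naive}})^\vee\simeq (j^*_{\on{co}})^\vee$. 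Third, compute $(j^*\circ\psId_{\Bun_G,\on{naive}})^\vee\simeq \psId_{\Bun_G,\on{naive}}^\vee\circ (j^*)^\vee\simeq \psId_{\Bun_G,\on{naive}}\circ j_{\on{co},*}$, using \lemref{l:naive self-dual} for the first factor and $(j^*)^\vee\simeq j_*=j_{\on{co},*}$ for the second. Fourth, invoke the defining property of $\psId_{\Bun_G,\on{naive}}$ from \secref{s:naive}: $\psId_{\Bun_G,\on{naive}}\circ j_{\on{co},*}\simeq j_*$. Fifth, note $(j^*_{\on{co}})^\vee\simeq ((j_*)^\vee)^\vee\simeq j_*$. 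Compare the two and conclude.

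\textbf{Main obstacle.} None of the individual moves is deep; the only thing requiring a little care is the compatibility of all the duality data — i.e. making sure that when we write $(j^*)^\vee\simeq j_*$ and $\psId_{\Bun_G,\on{naive}}^\vee\simeq\psId_{\Bun_G,\on{naive}}$ we are using mutually consistent identifications of $\Dmod(\Bun_G)_{\on{co}}^\vee$ with $\Dmod(\Bun_G)$ (namely $\bD_{\Bun_G}$ of \eqref{e:dual of co}) and of $\Dmod(U_G)^\vee$ with $\Dmod(U_G)$ (namely $\bD_{U_G}$). Since \lemref{l:naive self-dual} is stated using exactly these identifications, and the identity $(j_{\on{co},*})^\vee\simeq j^*$ at the end of \secref{ss:dual} is too, the compatibility is automatic; but it is worth stating explicitly that the duality datum on $\Dmod(U_G)$ here is $\bD_{U_G}$ (equivalently, one could phrase the whole argument without choosing this self-duality, keeping track only of $(-)^\vee$ as a functor to the abstract dual category, which removes any ambiguity). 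Once that bookkeeping is pinned down, the proof is the short chain of isomorphisms above.
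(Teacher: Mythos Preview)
Your proof is correct and is essentially identical to the paper's own argument: the paper simply says ``obtained by passing to the dual functors in $\psId_{\Bun_G,\on{naive}}\circ j_{\on{co},*}\simeq j_*$,'' which is exactly your computation run in the forward direction (dualize the defining relation, using $(j_{\on{co},*})^\vee\simeq j^*$, $(\psId_{\Bun_G,\on{naive}})^\vee\simeq \psId_{\Bun_G,\on{naive}}$, and $(j_*)^\vee=j^*_{\on{co}}$). One small notational slip: you write ``$(j^*)^\vee\simeq j_*=j_{\on{co},*}$,'' but $j_*$ lands in $\Dmod(\Bun_G)$ while $j_{\on{co},*}$ lands in $\Dmod(\Bun_G)_{\on{co}}$; the correct statement is $(j^*)^\vee\simeq j_{\on{co},*}$, and then $\psId_{\Bun_G,\on{naive}}\circ j_{\on{co},*}\simeq j_*$ by definition --- which is what you use anyway.
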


\begin{proof}
Obtained by passing to the dual functors is
$$\psId_{\Bun_G,\on{naive}}\circ j_{\on{co},*}\simeq j_*.$$
\end{proof}

\sssec{}

We now claim:

\begin{prop} \label{p:naive and functors}
There are canonical isomorphisms
$$\psId_{\Bun_G,\on{naive}}\circ \Eis_{\on{co},*}\simeq \Eis_*\circ \psId_{\Bun_M,\on{naive}}$$
and
$$\psId_{\Bun_M,\on{naive}}\circ \on{CT}_{\on{co},*}\simeq \on{CT}_*\circ \psId_{\Bun_G,\on{naive}}.$$
\end{prop}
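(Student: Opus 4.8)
The plan is to prove both isomorphisms by checking them ``locally'', i.e., after composing with the tautological functors $(j_M)_{\on{co},*}$ (resp., $(j_G)_{\on{co},*}$) coming from the colimit presentation \eqref{e:DBunG dual as colimit}. Since $\Dmod(\Bun_M)_{\on{co}}\simeq \underset{U_M}{\underset{\longrightarrow}{colim}}\,\Dmod(U_M)$, a continuous functor out of it is determined by its restrictions along the $(j_M)_{\on{co},*}$, compatibly with the structure maps; so it suffices to produce a compatible family of isomorphisms after precomposing.

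For the first isomorphism, I would fix $(U_M\overset{j_M}\hookrightarrow \Bun_M)\in\on{op-qc}(M)$ and choose $(U_G\overset{j_G}\hookrightarrow \Bun_G)\in\on{op-qc}(G)$ with $\sfp(\sfq^{-1}(U_M))\subset U_G$, as in \propref{p:Eis co expl}. Precomposing the left-hand side with $(j_M)_{\on{co},*}$ and using \propref{p:Eis co expl}, then the defining property $\psId_{\Bun_G,\on{naive}}\circ (j_G)_{\on{co},*}\simeq (j_G)_*$, gives
$$\psId_{\Bun_G,\on{naive}}\circ \Eis_{\on{co},*}\circ (j_M)_{\on{co},*}\simeq (j_G)_*\circ (j_G)^*\circ \Eis_*\circ (j_M)_*.$$
On the other hand, precomposing the right-hand side and using the analogue of \corref{c:naive and res} for $M$ together with $\psId_{\Bun_M,\on{naive}}\circ (j_M)_{\on{co},*}\simeq (j_M)_*$ gives $\Eis_*\circ (j_M)_*$. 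So the asserted isomorphism amounts to the claim that $(j_G)_*\circ (j_G)^*$ acts as the identity on the essential image of $\Eis_*\circ (j_M)_*$; equivalently, that $\Eis_*((j_M)_*(\CF))$ is supported (in the $*$-sense, i.e. extended by $*$ from) $U_G$. This is exactly the support estimate built into the choice of $U_G$: $\Eis_*=\sfp_*\circ\sfq^!$, and $\sfq^!$ of something $*$-extended from $U_M$ is $*$-extended from $\sfq^{-1}(U_M)$, while $\sfp_*$ of something supported on $\sfq^{-1}(U_M)$ is $*$-extended from any open containing $\sfp(\sfq^{-1}(U_M))$, hence from $U_G$. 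I would spell this out via base change along the square in \propref{p:Eis co expl}, checking compatibility with the transition maps as $U_M$ (and correspondingly $U_G$) grow, which is routine once the single-$U$ statement is set up.

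For the second isomorphism I would proceed dually/symmetrically: precompose with $(j_G)_{\on{co},*}$, apply \propref{p:CT co expl} to rewrite $\on{CT}_{\on{co},*}\circ (j_G)_{\on{co},*}$, apply $\psId_{\Bun_M,\on{naive}}\circ (j_M)_{\on{co},*}\simeq (j_M)_*$, and compare with the other side using $\corref{c:naive and res}$ for $G$ and $\psId_{\Bun_G,\on{naive}}\circ (j_G)_{\on{co},*}\simeq (j_G)_*$; the content reduces again to a support statement, this time for $\on{CT}_*=\sfq_*\circ\sfp^!$ of an object $*$-extended from $U_G$, with $U_M$ chosen so that $\sfq(\sfp^{-1}(U_G))\subset U_M$. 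Alternatively, the second isomorphism can be deduced from the first by passing to dual functors: $\psId^\vee_{\Bun_G,\on{naive}}\simeq \psId_{\Bun_G,\on{naive}}$ by \lemref{l:naive self-dual}, $\Eis^\vee_{\on{co},*}$ is (by definition, unwinding \secref{sss:dual Eis}) essentially $\on{CT}_*$ and dually $\on{CT}^\vee_{\on{co},*}$ is essentially $\Eis_*$, so dualizing the first statement yields the second after matching up identifications — though one must be a little careful that the relevant right adjoints are continuous, which is why I would favor the direct argument via \propref{p:CT co expl}.

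The main obstacle I anticipate is purely bookkeeping rather than conceptual: verifying that the pointwise isomorphisms are \emph{compatible} with the transition functors $(j_{1,2})_*$ in the colimit defining $\Dmod(\Bun_M)_{\on{co}}$ (and $\Dmod(\Bun_G)_{\on{co}}$), so that they glue to an isomorphism of functors on the colimit. This requires tracking several base-change isomorphisms along nested Cartesian squares of open substacks and checking that the coherence data agree; nothing should go wrong, but it is the step that needs care. The actual mathematical input — that Eisenstein series and constant term ``move support predictably'' under the maps $\sfp,\sfq$ — is already isolated in Propositions \ref{p:Eis co expl} and \ref{p:CT co expl} and in the defining property of $\psId_{\Bun_G,\on{naive}}$.
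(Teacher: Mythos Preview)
Your proposal is correct and follows essentially the same approach as the paper's proof: precompose with $(j_M)_{\on{co},*}$, invoke \propref{p:Eis co expl} and the defining property of $\psId_{\Bun_G,\on{naive}}$ to reduce to showing that $(j_G)_*\circ(j_G)^*$ is the identity on the image of $\Eis_*\circ(j_M)_*$, which follows from the support condition $\sfp(\sfq^{-1}(U_M))\subset U_G$. The paper treats the second isomorphism as ``similar'' via \propref{p:CT co expl}, exactly as you outline, and likewise flags the compatibility-under-transition check as routine.
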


\begin{proof}

We will prove the first isomorphism, while the second one is similar. 

\medskip

By definition, we need to construct a compatible family of isomorphisms of functors
$$\psId_{\Bun_G,\on{naive}}\circ \Eis_{\on{co},*}\circ (j_M)_{\on{co},*}
\simeq  \Eis_*\circ \psId_{\Bun_M,\on{naive}} \circ (j_M)_{\on{co},*}$$
for $(U_M\overset{j_M}\hookrightarrow \Bun_M) \in \on{op-qc}(M)$.

\medskip

For a given $U_M$, let $U_G$ be as in \propref{p:Eis co expl}. We rewrite
\begin{multline*}
\psId_{\Bun_G,\on{naive}}\circ \Eis_{\on{co},*}\circ (j_M)_{\on{co},*}
\simeq  
\psId_{\Bun_G,\on{naive}}\circ  (j_G)_{\on{co},*}\circ (j_G)^*\circ \Eis_*\circ (j_M)_*\simeq \\
\simeq  (j_G)_*\circ (j_G)^*\circ \Eis_*\circ (j_M)_*.
\end{multline*} 

However, it is easy to see that for the above choice of $U_G$, the natural map 
$$\Eis_*\circ (j_M)_*\to (j_G)_*\circ (j_G)^*\circ \Eis_*\circ (j_M)_*$$
is an isomorphism.

\medskip

Now, by definition,
$$\Eis_*\circ \psId_{\Bun_M,\on{naive}} \circ (j_M)_{\on{co},*}\simeq \Eis_*\circ (j_M)_*,$$
and the assertion follows. (It is clear that these isomorphisms are independent of the choice
of $U_G$, and hence are compatible under $(U_1)_M\hookrightarrow (U_2)_M$.)

\end{proof}

\ssec{Cuspidality}

\sssec{}

Recall that in \cite[Sect. 1.4]{DrGa3} the full subcategory
$$\Dmod(\Bun_G)_{\on{cusp}}\subset \Dmod(\Bun_G)$$
was defined as the intersection of the kernels of the functors $\on{CT}_*$ for 
\emph{all proper parabolic subgroups} $P\subset G$.

\medskip

Equivalently, let
$$\Dmod(\Bun_G)_{\Eis}\subset \Dmod(\Bun_G)$$
be the full subcategory, generated by the essential images of the functors
$\Eis_!$ for all proper parabolics. From the $(\Eis_!,\on{CT}_*)$-adjunction,
we obtain 
$$\Dmod(\Bun_G)_{\on{cusp}}=\left(\Dmod(\Bun_G)_{\Eis}\right)^\perp.$$

\sssec{}

We let
$$\Dmod(\Bun_G)_{\on{co,Eis}}\subset \Dmod(\Bun_G)_{\on{co}}$$
be the full subcategory generated by the essential images of the functors
$$\Eis_{\on{co},*}:\Dmod(\Bun_M)_{\on{co}}\to \Dmod(\Bun_G)_{\on{co}}.$$

\medskip

We define 
$$\Dmod(\Bun_G)_{\on{co,cusp}}:=\left(\Dmod(\Bun_G)_{\on{co,Eis}}\right)^\perp.$$

Equivalently, $\Dmod(\Bun_G)_{\on{co,cusp}}$ is the intersection of the kernels
of the functors $\on{CT}_{\on{co},?}$ for all proper parabolics. 

\sssec{}

From \corref{c:Eis op} we obtain:

\begin{cor}  \label{c:cusp and dual} \hfill

\smallskip

\noindent{\em(1)} An object of $\Dmod(\Bun_G)_{\on{co}}$ is cuspidal if and only if its pairing
with every object of $\Dmod(\Bun_G)_{\Eis}$ is zero under the canonical map
$$\langle-,-\rangle_{\Bun_G}:\Dmod(\Bun_G)\times \Dmod(\Bun_G)_{\on{co}}\to \Vect$$
corresponding to $\bD_{\Bun_G}$. 

\smallskip

\noindent{\em(2)} The identification $\bD_{\Bun_G}:\Dmod(\Bun_G)^\vee\simeq \Dmod(\Bun_G)_{\on{co}}$ induces identifications
$$(\Dmod(\Bun_G)_{\Eis})^\vee\simeq \Dmod(\Bun_G)_{\on{co,Eis}} \text{ and }
(\Dmod(\Bun_G)_{\on{cusp}})^\vee\simeq \Dmod(\Bun_G)_{\on{co,cusp}}.$$ 
\end{cor}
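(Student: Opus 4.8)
The plan is to deduce both parts from the general formalism of dual categories and the explicit description of $\bD_{\Bun_G}$ recorded in \secref{ss:dual}, together with \corref{c:Eis op}. The starting observation is purely formal: if $\bC$ is a compactly generated DG category and $\bC_0 \subset \bC$ is a full cocomplete subcategory generated by a set of objects (or, more relevantly here, by the essential images of a family of continuous functors $\sF_i : \bD_i \to \bC$), then under the canonical pairing $\langle-,-\rangle : \bC \times \bC^\vee \to \Vect$ the orthogonal $\bC_0^\perp \subset \bC$ corresponds, on the dual side, to the full subcategory of $\bC^\vee$ generated by the essential images of the conjugate (equivalently, dual) functors $\sF_i^{\on{op}} \simeq \sG_i^\vee$. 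First I would make this statement precise: passing to orthogonals is exchanged, under $\bC \simeq (\bC^\vee)^\vee$, with passing from a generated subcategory to its orthogonal, and the dual of the inclusion $\bC_0 \hookrightarrow \bC$ realizes $\bC_0^\vee$ as the localization $\bC^\vee \to \bC^\vee/(\bC_0^\perp)^\vee$, while the dual of the projection identifies $(\bC_0^\perp)^\vee$ with the subcategory of $\bC^\vee$ generated by the images of the $\sG_i^\vee$. This is standard DG-category yoga from \cite[Sect. 1]{DrGa1}, so I would cite it rather than reprove it.

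Next I would apply this with $\bC = \Dmod(\Bun_G)$, $\bC^\vee$ identified with $\Dmod(\Bun_G)_{\on{co}}$ via $\bD_{\Bun_G}$ of \eqref{e:dual of co}, and the family of functors being $\Eis^\mu_!$ (hence $\sF = \Eis_!$, with right adjoint $\sG = \on{CT}_*$) ranging over all proper parabolics $P$. By definition $\Dmod(\Bun_G)_{\Eis}$ is the subcategory generated by the essential images of the $\Eis_!$, and $\Dmod(\Bun_G)_{\on{cusp}} = (\Dmod(\Bun_G)_{\Eis})^\perp$. By \corref{c:Eis op}, the conjugate functor $(\Eis_!)^{\on{op}}$ is canonically $\Eis_{\on{co},*}$, and by \lemref{l:conj} this agrees with $(\on{CT}_*)^\vee$. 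Hence the subcategory of $\Dmod(\Bun_G)_{\on{co}}$ generated by the images of the $\Eis_{\on{co},*}$ — which is by definition $\Dmod(\Bun_G)_{\on{co,Eis}}$ — is exactly the dual subcategory corresponding to $\Dmod(\Bun_G)_{\Eis}$; this gives the first identification in part (2). Taking orthogonals on both sides gives $(\Dmod(\Bun_G)_{\on{cusp}})^\vee \simeq \Dmod(\Bun_G)_{\on{co,cusp}}$, using that $\Dmod(\Bun_G)_{\on{co,cusp}}$ was defined as $(\Dmod(\Bun_G)_{\on{co,Eis}})^\perp$, which is the second identification in part (2). (I would also note in passing the claimed equivalence ``$= \bigcap \ker \on{CT}_{\on{co},?}$'' follows from the $(\Eis_{\on{co},*}, \on{CT}_{\on{co},?})$-adjunction recorded just before, mirroring the $(\Eis_!, \on{CT}_*)$ argument.)

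For part (1): by construction of $\bD_{\Bun_G}$ the pairing $\langle-,-\rangle_{\Bun_G} : \Dmod(\Bun_G) \times \Dmod(\Bun_G)_{\on{co}} \to \Vect$ is the canonical evaluation pairing between a compactly generated category and its dual. An object $c^\vee \in \Dmod(\Bun_G)_{\on{co}}$ lies in the orthogonal $\bC_0^\perp$ (computed inside $\bC^\vee$, i.e. it is killed by all functors whose images generate the dual subcategory $(\bC_0)^\vee$-complement) precisely when $\langle c, c^\vee\rangle = 0$ for every $c$ in the generating subcategory $\bC_0 = \Dmod(\Bun_G)_{\Eis}$ — this is just the defining property of the evaluation pairing together with continuity of $\langle c, -\rangle$. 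So $c^\vee$ is cuspidal iff its pairing with every object of $\Dmod(\Bun_G)_{\Eis}$ vanishes, which is the assertion. Alternatively one can phrase it via adjunction: $\langle \Eis_!(\CF_M), c^\vee\rangle \simeq \langle \CF_M, \on{CT}_{\on{co},?}(c^\vee)\rangle_{\Bun_M}$ by the duality between adjoint and dual functors of \secref{sss:dual and adj}, so vanishing of all such pairings is equivalent to $c^\vee \in \bigcap \ker \on{CT}_{\on{co},?}$.

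The one genuine point to be careful about — and the likely main obstacle — is the interchange ``orthogonal of a generated subcategory $\leftrightarrow$ generated subcategory on the dual side,'' specifically checking that the subcategory generated by the images of the $\sG_i^\vee$ is honestly the orthogonal complement of $\bC_0^\perp$ inside $\bC^\vee$ (and not merely contained in it), i.e. that no extra cuspidal-type objects appear on the dual side. This requires knowing that $\bC_0 \subset \bC$ is generated by compact objects of $\bC$ — here, that $\Dmod(\Bun_G)_{\Eis}$ is compactly generated, which holds because $\Eis^\mu_!$ preserves compactness (it is left adjoint to the continuous functor $\on{CT}^\mu_*$). Granting that, the semiorthogonal decomposition $\bC = \langle \bC_0^\perp, \bC_0\rangle$ dualizes to $\bC^\vee = \langle (\bC_0)^\vee\text{-part}, (\bC_0^\perp)^\vee\rangle$ with the pieces as claimed, and everything goes through formally.
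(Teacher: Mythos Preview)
Your proposal is correct and follows exactly the route the paper intends: the paper gives no proof at all beyond the phrase ``From \corref{c:Eis op} we obtain,'' and your argument is a careful unpacking of precisely that deduction. The key inputs you identify---that $\Eis_!$ preserves compactness (so $\Dmod(\Bun_G)_{\Eis}$ is generated by objects compact in the ambient category), that $(\Eis_!)^{\on{op}}\simeq \Eis_{\on{co},*}$ by \corref{c:Eis op}, and the general yoga that passing to duals exchanges a compactly generated subcategory with the subcategory generated by the op-images of its compact generators---are exactly what the paper is invoking implicitly.
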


\begin{rem}
We will see shortly that $\Dmod(\Bun_G)_{\on{co,cusp}}$ belongs to the intersection
of the kernels of the functors $\on{CT}_{\on{co},*}$ for all proper parabolics. But this
inclusion is strict. For example fr $G=SL_2$, the object from \cite[Theorem 7.7.2]{Ga2} 
belongs to $\on{CT}_{\on{co},*}$  (there is only one parabolic to consider), but it does
not belong to $\Dmod(\Bun_G)_{\on{co,cusp}}$ .
\end{rem}

\sssec{}

Our goal for the rest of this section is to prove:

\begin{thm}  \label{t:cusp naive equiv}
The restriction of the functor $\psId_{\Bun_G,\on{naive}}$ to
$$\Dmod(\Bun_G)_{\on{co,cusp}}\subset \Dmod(\Bun_G)_{\on{co}}$$ takes values in
$\Dmod(\Bun_G)_{\on{cusp}}\subset \Dmod(\Bun_G)$, and defines an equivalence
$$\Dmod(\Bun_G)_{\on{co,cusp}}\to \Dmod(\Bun_G)_{\on{cusp}}.$$
\end{thm}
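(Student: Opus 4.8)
The plan is to prove the two assertions in turn: first that $\psId_{\Bun_G,\on{naive}}$ carries $\Dmod(\Bun_G)_{\on{co,cusp}}$ into $\Dmod(\Bun_G)_{\on{cusp}}$, and then that the resulting functor is an equivalence. The first part is a formal consequence of \propref{p:naive and functors}: if $\CF\in \Dmod(\Bun_G)_{\on{co,cusp}}$, then $\on{CT}_{\on{co},?}(\CF)=0$ for every proper parabolic, and we want $\on{CT}_*(\psId_{\Bun_G,\on{naive}}(\CF))=0$. Here I would use the second isomorphism of \propref{p:naive and functors}, namely $\psId_{\Bun_M,\on{naive}}\circ \on{CT}_{\on{co},*}\simeq \on{CT}_*\circ \psId_{\Bun_G,\on{naive}}$, together with the fact (to be recorded, cf. the Remark after \corref{c:cusp and dual}) that $\Dmod(\Bun_G)_{\on{co,cusp}}$ lies in the kernel of $\on{CT}_{\on{co},*}$ for all proper $P$. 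This gives $\on{CT}_*(\psId_{\Bun_G,\on{naive}}(\CF))\simeq \psId_{\Bun_M,\on{naive}}(\on{CT}_{\on{co},*}(\CF))=0$, as desired. So the image lands in the cuspidal subcategory.

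\textbf{Equivalence.} For the second part, I would exploit the self-duality of $\psId_{\Bun_G,\on{naive}}$ (\lemref{l:naive self-dual}) together with the duality identifications of \corref{c:cusp and dual}(2). The statement ``$\psId_{\Bun_G,\on{naive}}$ restricted to $\Dmod(\Bun_G)_{\on{co,cusp}}$ is an equivalence onto $\Dmod(\Bun_G)_{\on{cusp}}$'' is, under $\bD_{\Bun_G}$, equivalent to the statement that the pairing
$$\langle-,-\rangle:\Dmod(\Bun_G)_{\on{cusp}}\times \Dmod(\Bun_G)_{\on{co,cusp}}\to \Vect$$
induced by $\psId_{\Bun_G,\on{naive}}$ is a perfect pairing. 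To see perfection, the key geometric input is that cuspidal objects are ``supported'' on a quasi-compact open: I would prove that for any $\CF\in \Dmod(\Bun_G)_{\on{co,cusp}}$ there exists $(U\overset{j}\hookrightarrow \Bun_G)\in\on{op-qc}(G)$ with $\CF$ in the essential image of $j_{\on{co},*}$, and dually that any cuspidal object of $\Dmod(\Bun_G)$ is the $*$-extension from such a $U$ (these are Propositions \ref{p:!-supp cusp} and \ref{p:!-supp cusp co} referenced in the introduction). Granting this, on such a $U$ the functor $\psId_{\Bun_G,\on{naive}}\circ j_{\on{co},*}=j_*$ is fully faithful, and $j_{\on{co},*}$ is fully faithful by \lemref{l:j ff}; combining these with the supported-ness statements shows that $\psId_{\Bun_G,\on{naive}}$ is fully faithful on $\Dmod(\Bun_G)_{\on{co,cusp}}$ and essentially surjective onto $\Dmod(\Bun_G)_{\on{cusp}}$.

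\textbf{The hard part} will be the quasi-compact support statements for cuspidal objects — i.e.\ establishing that both $\Dmod(\Bun_G)_{\on{cusp}}$ and $\Dmod(\Bun_G)_{\on{co,cusp}}$ consist of objects genuinely living on a quasi-compact open substack. The standard tool is the description, due to Drinfeld--Wang and others, of the complement of a suitable quasi-compact open in terms of the Harder--Narasimhan stratification, together with the fact that each boundary stratum is (roughly) the image of an Eisenstein-type map from a proper parabolic, so that cuspidality forces vanishing near the boundary. For the ``co'' side one runs the dual argument, using that $\Dmod(\Bun_G)_{\on{co}}$ is the colimit $\underset{\longrightarrow}{\on{colim}}\,\Dmod(U)$ under $*$-pushforwards and that a cuspidal object, having zero constant terms, is killed by the transition maps associated to enlarging past the cuspidal support. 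Once this localization is in hand, everything else — full faithfulness via \lemref{l:j ff} and the identity $\psId_{\Bun_G,\on{naive}}\circ j_{\on{co},*}=j_*$, essential surjectivity, and the fact that the inverse is again induced by $\psId_{\Bun_G,\on{naive}}$ via its self-duality — is a formal diagram chase.
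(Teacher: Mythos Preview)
Your overall strategy matches the paper's: reduce to quasi-compact support for cuspidal and co-cuspidal objects (Propositions \ref{p:!-supp cusp} and \ref{p:!-supp cusp co}), then exploit $\psId_{\Bun_G,\on{naive}}\circ j_{\on{co},*}=j_*$ together with full faithfulness of $j_*$ and $j_{\on{co},*}$. However, there is a genuine gap in your first step, and the same gap reappears in essential surjectivity.

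You rely on the inclusion $\Dmod(\Bun_G)_{\on{co,cusp}}\subset \bigcap_P \ker(\on{CT}_{\on{co},*})$, citing the Remark after \corref{c:cusp and dual}. But that remark only \emph{announces} the fact; it is not formal. Co-cuspidality is defined via $\on{CT}_{\on{co},?}=(\Eis_!)^\vee$, whereas $\on{CT}_{\on{co},*}=(\Eis_*)^\vee$, and these are genuinely different functors (indeed the remark stresses that the inclusion is strict in the other direction). Passing from vanishing of one to vanishing of the other amounts to knowing that the essential image of $\Eis_*$ lies in the subcategory generated by the images of $\Eis_!$, and conversely after restriction to a quasi-compact open. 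The paper supplies exactly this as \propref{p:via BunP tilde}: a filtration of $\Eis_*$ by $\Eis_!$'s coming from the stratification of the relative compactification $\wt\Bun_P$. This geometric input is the engine behind \propref{p:cusp co via CT} and \propref{p:two cusp}, hence behind \corref{c:descr cusp co}, and you do not mention it. Without it, neither your argument that $\psId_{\Bun_G,\on{naive}}$ carries co-cuspidal to cuspidal, nor the (unstated) verification that the candidate inverse $(\jmath_G)_{\on{co},*}\circ (\jmath_G)^*$ lands back in $\Dmod(\Bun_G)_{\on{co,cusp}}$, goes through.

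A smaller point: your sketch for \propref{p:!-supp cusp co} via objects being ``killed by the transition maps'' is not right --- the transition maps $(j_{1,2})_*$ in the colimit presentation of $\Dmod(\Bun_G)_{\on{co}}$ are fully faithful and kill nothing. The paper's argument is instead a clean duality: under \corref{c:cusp and dual}(2), the dual of the embedding $\be_{\on{co}}:\Dmod(\Bun_G)_{\on{co,cusp}}\hookrightarrow \Dmod(\Bun_G)_{\on{co}}$ is the left adjoint $\bbf$ of $\Dmod(\Bun_G)_{\on{cusp}}\hookrightarrow\Dmod(\Bun_G)$, and one then reads off the claim from \propref{p:!-supp cusp}.
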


\ssec{Support of cuspidal objects}

\sssec{}

The following crucial property of $\Dmod(\Bun_G)_{\on{cusp}}$ was established in
\cite[Proposition 1.4.6]{DrGa3}:

\begin{prop}  \label{p:!-supp cusp}
There exists an element $(\CU_G\overset{\jmath_G}\hookrightarrow \Bun_G) \in \on{op-qc}(G)$,
such that for any $\CF\in \Dmod(\Bun_G)_{\on{cusp}}$, the maps
$$(\jmath_G)_!\circ (\jmath_G)^*(\CF)\to \CF\to (\jmath_G)_*\circ (\jmath_G)^*(\CF)$$
are isomorphisms.
\end{prop}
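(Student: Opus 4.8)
The plan is to produce a single quasi-compact open $\CU_G$ that ``captures'' all cuspidal objects simultaneously, and the mechanism is the behavior of cuspidal sheaves near the boundary of $\Bun_G$. First I would invoke the standard stratification of $\Bun_G$ by instability: for a rational number $N>0$, let $\Bun_G^{\le N}\subset \Bun_G$ denote the open substack of bundles whose Harder--Narasimhan slope is bounded by $N$ (in each connected component), so that the $\Bun_G^{\le N}$ exhaust $\Bun_G$ and each lies in $\on{op-qc}(G)$. The key point is that the complement of $\Bun_G^{\le N}$ is covered, up to the HN recursion, by the images of the maps $\sfp\colon \Bun_P^{\mathrm{unst}}\to \Bun_G$ for proper parabolics $P$, where one restricts to the locus of $P$-bundles whose associated $M$-bundle is sufficiently unstable. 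Since a cuspidal object is killed by $\on{CT}_*^\mu=\sfq_*\circ\sfp^!$ for every proper $P$ and every $\mu$, its $!$-restriction to this ``very unstable'' locus vanishes, and hence — once $N$ is chosen large enough, uniformly in $P$ and $\mu$, which is where the finiteness of the set of parabolics and properness of the relevant HN strata enter — the $!$-restriction of any $\CF\in\Dmod(\Bun_G)_{\on{cusp}}$ to $\Bun_G\setminus \Bun_G^{\le N}$ is zero.

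Concretely I would set $\CU_G:=\Bun_G^{\le N}$ for such an $N$, with $\jmath_G$ the open embedding. The vanishing of $\jmath_G'^{*}\CF$, where $\jmath_G'$ is the embedding of the closed complement $Z:=\Bun_G\setminus\CU_G$ (taken, say, with reduced structure, or stratum by stratum), immediately gives that $(\jmath_G)_!\circ(\jmath_G)^*(\CF)\to\CF$ is an isomorphism, by the standard triangle relating $\jmath_{G,!}\jmath_G^*$, the identity, and $\jmath'_{G,*}\jmath_G'^*$ applied to the appropriate side. For the other isomorphism $\CF\to(\jmath_G)_*\circ(\jmath_G)^*(\CF)$ one applies the same reasoning to the Verdier dual, noting that $\Dmod(\Bun_G)_{\on{cusp}}$ is stable under Verdier duality — this is because $\on{CT}_*$ and $\on{CT}_!$ are interchanged by duality (Theorem~\ref{t:CT}, $\on{CT}_!\simeq\on{CT}^-_*$), and both families of functors, ranging over all proper parabolics, cut out the same subcategory. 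Hence if $\CF$ is cuspidal so is its dual, its $!$-restriction to $Z$ also vanishes, and dualizing back yields the $*$-statement.

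There is a subtlety to handle: the embedding $Z\hookrightarrow\Bun_G$ is not a single smooth locally closed substack but a union of HN strata of growing codimension, so the vanishing of $\jmath_G'^*\CF$ should really be checked stratum by stratum and then assembled. On each stratum, which is (up to passing to a Levi) the image of $\sfp$ restricted to a component of $\Bun_P$ with fixed unstable $M$-degree, the $!$-pullback of $\CF$ along the stratum embedding is computed, via smooth base change along the smooth projection $\sfq$ of that stratum onto $\Bun_M^\mu$, in terms of $\sfp^!\CF$; and $\sfq_*\sfp^!\CF=\on{CT}_*^\mu\CF=0$ by cuspidality, which forces $\sfp^!\CF=0$ on the relevant locus (here one uses that $\sfq$ on the stratum is smooth and surjective onto a component, so $\sfq_*$ is conservative after the appropriate restriction, or one argues directly with $\sfq^!$). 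I expect the main obstacle to be exactly this uniformity: showing that a single $N$ works for all proper $P$ and all components $\mu$ at once, i.e., that there is a number $N$ beyond which every unstable bundle ``detects'' cuspidality through some proper parabolic with its Levi-degree in the range where $\on{CT}_*$ already forces vanishing. This is a boundedness statement about the HN stratification — the set of parabolics is finite and the instability of the boundary grows without bound as one goes to infinity in $\Bun_G$ — and it is precisely the content one extracts from \cite[Proposition 1.4.6]{DrGa3}; in the write-up I would either quote that directly or reproduce its HN-recursion argument.
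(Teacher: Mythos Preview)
The paper does not actually prove this proposition: it is simply quoted from \cite[Proposition~1.4.6]{DrGa3}, so there is no in-paper argument to compare against. Your sketch via the Harder--Narasimhan stratification is indeed the mechanism behind the cited result, and you acknowledge as much at the end.

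That said, one step in your outline is genuinely problematic in the setting of this paper. To pass from one of the two isomorphisms to the other you invoke Verdier duality on $\Bun_G$, arguing that $\Dmod(\Bun_G)_{\on{cusp}}$ is stable under it. But the whole point of \secref{ss:dual} and \secref{ss:pseudo} is that $\Bun_G$ is not quasi-compact and there is \emph{no} Verdier duality endofunctor on $\Dmod(\Bun_G)$: the naive functor does not preserve compact objects, and its replacement $\psId_{\Bun_G,!}$ lands in a different category. So ``apply the same reasoning to the Verdier dual of $\CF$'' is not available for a general cuspidal $\CF$. The clean fix is to bypass duality entirely: by \thmref{t:weird adj} one has $\on{CT}_!\simeq\on{CT}^-_*$, so cuspidality (defined as $\bigcap_P\ker\on{CT}_*$) equals $\bigcap_P\ker\on{CT}_!$ as well. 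Then on each HN stratum the vanishing of $\on{CT}_*$ controls the $!$-restriction and the vanishing of $\on{CT}_!$ controls the $*$-restriction, giving both triangles directly.

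There is also a small bookkeeping slip: the vanishing of the $!$-restriction to the closed complement $Z$ (which is what $\on{CT}_*=\sfq_*\sfp^!$ gives you) yields the isomorphism $\CF\to(\jmath_G)_*(\jmath_G)^*\CF$, via the triangle $i_*i^!\to\on{Id}\to j_*j^*$, not the $j_!j^*$ one. You have the two triangles swapped in the second paragraph.
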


\sssec{}

We now claim that a parallel phenomenon takes place for $\Dmod(\Bun_G)_{\on{co,cusp}}$: 

\begin{prop} \label{p:!-supp cusp co}
For any $\CF\in \Dmod(\Bun_G)_{\on{co,cusp}}$, the map
$$\CF\to (\jmath_G)_{\on{co},*}\circ (\jmath_G)^*_{\on{co}}(\CF)$$
is an isomorphism.
\end{prop}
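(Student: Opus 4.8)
The plan is to deduce \propref{p:!-supp cusp co} from \propref{p:!-supp cusp} by duality, using the identification $\bD_{\Bun_G}:\Dmod(\Bun_G)^\vee\simeq \Dmod(\Bun_G)_{\on{co}}$ together with \corref{c:cusp and dual}(2), which matches $\Dmod(\Bun_G)_{\on{co,cusp}}$ with $(\Dmod(\Bun_G)_{\on{cusp}})^\vee$. The point is that the natural transformation $\on{Id}\to (\jmath_G)_{\on{co},*}\circ(\jmath_G)^*_{\on{co}}$, restricted to the cuspidal subcategory, should be the dual of the natural transformation $(\jmath_G)_!\circ(\jmath_G)^*\to\on{Id}$ restricted to $\Dmod(\Bun_G)_{\on{cusp}}$, and the latter is an isomorphism by \propref{p:!-supp cusp}.

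\medskip

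First I would spell out which adjunction morphism is being dualized. Recall $(j_{\on{co},*})^\vee\simeq j^*$ and $(j^*_{\on{co}})\simeq (j_*)^\vee$; dually, $j_*$ is right adjoint to $j^*$, while the \emph{left} adjoint of $j^*$ on the quasi-compact stack $\CU_G$ is $(\jmath_G)_!$ (well-defined on $\Dmod(\CU_G)$ since $\CU_G$ is quasi-compact, hence $j^*$ preserves compactness and admits a continuous left adjoint). Passing to duals in the $((\jmath_G)_!,(\jmath_G)^*)$-adjunction, as in \secref{sss:dual and adj}, turns the co-unit $(\jmath_G)_!\circ(\jmath_G)^*\to\on{Id}_{\Dmod(\Bun_G)}$ into the unit $\on{Id}_{\Dmod(\Bun_G)_{\on{co}}}\to ((\jmath_G)_!)^\vee\circ((\jmath_G)^*)^\vee$. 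Here $((\jmath_G)^*)^\vee\simeq (\jmath_G)_{\on{co},*}$ (this is the definition of $j_{\on{co},*}$ in \eqref{e:dual of co}, since $(j_{\on{co},*})^\vee\simeq j^*$), and $((\jmath_G)_!)^\vee\simeq (\jmath_G)^*_{\on{co}}$ because $(\jmath_G)_!$ is left adjoint to $(\jmath_G)^*$ and $j^*_{\on{co}}=(j_*)^\vee$ is, dually, left adjoint to $j_{\on{co},*}$. So the dual of the co-unit $(\jmath_G)_!\circ(\jmath_G)^*\to\on{Id}$ is precisely the unit $\on{Id}\to (\jmath_G)_{\on{co},*}\circ(\jmath_G)^*_{\on{co}}$.

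\medskip

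Now I would restrict to the cuspidal parts. Under $\bD_{\Bun_G}$ and \corref{c:cusp and dual}(2), the category $\Dmod(\Bun_G)_{\on{co,cusp}}$ is dual to $\Dmod(\Bun_G)_{\on{cusp}}$, and since $\Dmod(\Bun_G)_{\on{cusp}}\subset \Dmod(\Bun_G)$ is a full subcategory closed under the relevant operations (in particular the functors $(\jmath_G)_!$, $(\jmath_G)^*$, $(\jmath_G)_*$ preserve it in the sense of \propref{p:!-supp cusp}), the morphism of functors $(\jmath_G)_!\circ(\jmath_G)^*\to\on{Id}$ restricts to an isomorphism of endofunctors of $\Dmod(\Bun_G)_{\on{cusp}}$ by \propref{p:!-supp cusp}. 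An isomorphism of functors between dualizable categories dualizes to an isomorphism of the dual functors; hence the unit $\on{Id}\to (\jmath_G)_{\on{co},*}\circ(\jmath_G)^*_{\on{co}}$ is an isomorphism of endofunctors of $\Dmod(\Bun_G)_{\on{co,cusp}}$, which is exactly the assertion.

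\medskip

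The main obstacle I anticipate is the bookkeeping in the second paragraph: one must check carefully that dualizing the $((\jmath_G)_!,(\jmath_G)^*)$-adjunction really produces the $((\jmath_G)^*_{\on{co}},(\jmath_G)_{\on{co},*})$-adjunction with the \emph{unit} morphism (and not some other natural transformation), and that the identification of dual functors is compatible with the relevant natural transformation — i.e. that the square relating the co-unit on the $\Dmod(\Bun_G)$ side and the unit on the $\Dmod(\Bun_G)_{\on{co}}$ side genuinely commutes. A secondary point requiring care is the justification that $(\jmath_G)_!$ is globally defined and continuous here; this is automatic since $\CU_G$ is quasi-compact, but one should remark on why the partially-defined-functor subtleties of \secref{s:functors} do not intervene. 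Once these compatibilities are nailed down, the cuspidality reduction and the final dualization are formal.
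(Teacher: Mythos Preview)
Your overall strategy---deduce the proposition from \propref{p:!-supp cusp} by duality---is exactly the paper's, but the bookkeeping you flagged as the main obstacle does in fact fail as written. The claimed identification $((\jmath_G)_!)^\vee\simeq (\jmath_G)^*_{\on{co}}$ is incorrect. Dualizing the adjunction $(\jmath_G)_!\dashv (\jmath_G)^*$ (as in \secref{sss:dual and adj}) yields $((\jmath_G)^*)^\vee\dashv ((\jmath_G)_!)^\vee$, i.e.\ $(\jmath_G)_{\on{co},*}\dashv ((\jmath_G)_!)^\vee$. So $((\jmath_G)_!)^\vee$ is the \emph{right} adjoint of $(\jmath_G)_{\on{co},*}$ (the functor the paper calls $j^?$ for co-truncative opens), not the left adjoint $(\jmath_G)^*_{\on{co}}$. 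Consequently the dual of the co-unit $(\jmath_G)_!\circ(\jmath_G)^*\to \on{Id}$ is a map $(\jmath_G)_{\on{co},*}\circ ((\jmath_G)_!)^\vee\to \on{Id}$, which is not the unit you want.

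The fix is to dualize the \emph{other} half of \propref{p:!-supp cusp}. The unit $\on{Id}\to (\jmath_G)_*\circ(\jmath_G)^*$ of the $((\jmath_G)^*,(\jmath_G)_*)$-adjunction dualizes (again by \secref{sss:dual and adj}) to the unit of the $((\jmath_G)^*_{\on{co}},(\jmath_G)_{\on{co},*})$-adjunction, since $((\jmath_G)_*)^\vee=(\jmath_G)^*_{\on{co}}$ and $((\jmath_G)^*)^\vee=(\jmath_G)_{\on{co},*}$ by definition. This is precisely what the paper does. There is one further point of care in the restriction to cuspidal parts: the dual of the inclusion $\be_{\on{co}}:\Dmod(\Bun_G)_{\on{co,cusp}}\hookrightarrow \Dmod(\Bun_G)_{\on{co}}$ is not the inclusion $\be$ on the other side but its \emph{left adjoint} $\bbf:\Dmod(\Bun_G)\to\Dmod(\Bun_G)_{\on{cusp}}$. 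So the dual statement to check is that $\bbf\to \bbf\circ(\jmath_G)_*\circ(\jmath_G)^*$ is an isomorphism, which amounts to showing that any $\CF'$ with $(\jmath_G)^*(\CF')=0$ is left-orthogonal to $\Dmod(\Bun_G)_{\on{cusp}}$; this follows immediately from the isomorphism $\CF\simeq (\jmath_G)_*\circ(\jmath_G)^*(\CF)$ for cuspidal $\CF$.
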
 

\begin{proof}

We need to show that the map from the tautological embedding
\begin{equation}  \label{e:emb}
\Dmod(\Bun_G)_{\on{co,cusp}}\overset{\be_{\on{co}}}\hookrightarrow \Dmod(\Bun_G)_{\on{co}}
\end{equation} 
to the composition 
$$\Dmod(\Bun_G)_{\on{co,cusp}}\overset{\be_{\on{co}}}\hookrightarrow \Dmod(\Bun_G)_{\on{co}}\overset{(\jmath_G)_{\on{co}}^*}\longrightarrow
\Dmod(\CU_G) \overset{(\jmath_G)_{\on{co},*}}\longrightarrow \Dmod(\Bun_G)_{\on{co}}$$
is an isomorphism.

\medskip

Note that in terms of the identification of \corref{c:cusp and dual}(b), the dual of the embedding $\be_{\on{co}}$ of \eqref{e:emb}
is the functor
\begin{equation}  \label{e:emb right}
\bbf:\Dmod(\Bun_G)\to \Dmod(\Bun_G)_{\on{cusp}},
\end{equation}
left adjoint to the tautological embedding $\Dmod(\Bun_G)_{\on{cusp}}\overset{\be}\hookrightarrow \Dmod(\Bun_G)$. 

\medskip

Hence, by duality, we need to show that the functor \eqref{e:emb right} maps isomorphically to the composition
$$\Dmod(\Bun_G)\overset{(\jmath_G)^*}\longrightarrow
\Dmod(\CU_G) \overset{(\jmath_G)_*}\longrightarrow \Dmod(\Bun_G)\overset{\bbf}\longrightarrow \Dmod(\Bun_G)_{\on{cusp}}.$$

The latter is equivalent to the fact that any $\CF'\in \Dmod(\Bun_G)$ for which $\jmath_G^*(\CF')=0$, is left-orthogonal to
$\Dmod(\Bun_G)_{\on{cusp}}$. However, this follows from the isomorphism
$$\CF\to (\jmath_G)_*\circ (\jmath_G)^*(\CF),\quad \CF\in \Dmod(\Bun_G)_{\on{cusp}}$$
of \propref{p:!-supp cusp}.

\end{proof}

\ssec{Description of the cuspidal category}

\sssec{}

We claim: 

\begin{prop} \label{p:cusp co via CT}
Let $\CF\in \Dmod(\Bun_G)_{\on{co}}$ be such that there exists
$(U\overset{j}\hookrightarrow \Bun_G) \in \on{op-qc}(G)$ such that the map
$$\CF\to j_{\on{co},*}\circ j^*_{\on{co}}(\CF)$$
is an isomorphism. Then $\CF\in \Dmod(\Bun_G)_{\on{co,cusp}}$ if and only if 
$\on{CT}_{\on{co},*}(\CF)=0$ for all proper parabolics.
\end{prop}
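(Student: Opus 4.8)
The statement asserts two implications, one of which is essentially tautological and one of which requires the support hypothesis. The ``only if'' direction is immediate: if $\CF\in \Dmod(\Bun_G)_{\on{co,cusp}}$, then by the remark following \corref{c:cusp and dual} (which we will need to prove, but morally: cuspidal-co objects are annihilated by $\on{CT}_{\on{co},*}$ for proper parabolics), we get $\on{CT}_{\on{co},*}(\CF)=0$. Actually, let me instead derive this directly: recall $\on{CT}_{\on{co},*}\simeq (\on{CT}^{-}_*)^{\on{op}}$ by one of the corollaries, and unwinding the definition of $(-)^{\on{op}}$ together with the fact that $\CF$ is ``supported'' on $U$ (via the isomorphism $\CF\simeq j_{\on{co},*}\circ j^*_{\on{co}}(\CF)$), the vanishing will follow from the vanishing of $\on{CT}^{-}_{\on{co},?}(\CF)$, which holds by definition of $\Dmod(\Bun_G)_{\on{co,cusp}}$ as the intersection of kernels of $\on{CT}_{\on{co},?}$ (and $\on{CT}^-_{\on{co},?}\simeq \on{CT}_{\on{co},?}$ after reindexing parabolics).

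The content is the ``if'' direction. So suppose $\CF$ satisfies the support condition and $\on{CT}_{\on{co},*}(\CF)=0$ for all proper $P$. We must show $\on{CT}_{\on{co},?}(\CF)=0$ for all proper $P$, i.e. $\CF$ pairs to zero against every object in $\Dmod(\Bun_M)$ under $\langle \Eis_!(-),\CF\rangle_{\Bun_G}$ for all proper $P$; equivalently $\langle \CF', \CF\rangle_{\Bun_G}=0$ for all $\CF'\in \Dmod(\Bun_G)_{\Eis}$. The key observation is that because $\CF$ is supported on the quasi-compact open $U$, its pairing with any $\CF'\in\Dmod(\Bun_G)$ only depends on $j^*(\CF')$: indeed $\langle \CF',\CF\rangle_{\Bun_G}\simeq \langle j^*(\CF'), j^*_{\on{co}}(\CF)\rangle_U$ using the $(j^*_{\on{co}}, j_{\on{co},*})$-adjunction of \secref{ss:dual of CT} and the hypothesis $\CF\simeq j_{\on{co},*} j^*_{\on{co}}(\CF)$. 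So it suffices to show: for every proper $P$ and every $\CG\in \Dmod(\Bun_M)$, the object $j^*\bigl(\Eis_!(\CG)\bigr)$ pairs to zero with $j^*_{\on{co}}(\CF)$ inside $\Dmod(U)$. Now I would restrict the pull–push diagram defining $\Eis_!$ to $U$, as in the proof of \propref{p:Eis co expl}: with $U_P:=\sfq^{-1}(U_M)$ for a suitable quasi-compact open $U_M\subset\Bun_M$, one gets $j^*\circ\Eis_!\circ (j_M)_!$ computed by pull–push along the restricted diagram, and dually $j^*_{\on{co}}\circ\on{CT}_{\on{co},*}\circ j_{\on{co},*}$ is computed by the adjoint (restricted) diagram.

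Putting these together, the pairing $\langle j^*\Eis_!(\CG), j^*_{\on{co}}(\CF)\rangle_U$ gets rewritten, by adjunction along the restricted diagram $U_G \leftarrow U_P \to U_M$ (and the base-change identity already used in \propref{p:Eis co expl}), as a pairing of the form $\langle \CG|_{U_M}, \bigl(j_M^*\circ\on{CT}_{\on{co},*}(\CF)\bigr)\rangle_{U_M}$ — i.e. the adjunction $(\Eis_!,\on{CT}_*)$ on the full stacks descends, after passing to duals and restricting to $U$, to an adjunction that reduces the pairing to a $\on{CT}_{\on{co},*}$-expression. Since $\on{CT}_{\on{co},*}(\CF)=0$ by hypothesis, this pairing vanishes, and we are done. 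The one subtlety worth flagging: one must choose $U_M\in\on{op-qc}(M)$ large enough that $\sfq(\sfp^{-1}(U_G))\subset U_M$ (so that $\on{CT}_{\on{co},*}\circ j_{\on{co},*}$ is given by the clean formula of \propref{p:CT co expl}) and simultaneously compatible with the support $U=U_G$ of $\CF$; such $U_M$ exists because $\sfp^{-1}(U_G)$ is quasi-compact (as $\sfp$ is, on each component, of finite type after the parabolic truncation) so its image under $\sfq$ is quasi-compact.

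\textbf{Main obstacle.} The delicate point is not the formal adjunction shuffling but verifying that all the partially-defined functors ($\sfp_!$ on the relevant essential image, the conjugate functors $(-)^{\on{op}}$, the restricted $\on{CT}_{\on{co},*}$) genuinely agree with their restricted-diagram descriptions on the class of objects at hand — i.e. that the base-change isomorphisms underlying \propref{p:Eis co expl} and \propref{p:CT co expl} can be combined coherently for a single compatible choice of $U_G=U$ and $U_M$. This is where I would spend the most care, though it is morally forced by the quasi-compactness of $\sfp^{-1}(U)$.
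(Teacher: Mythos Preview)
Your argument has a genuine gap: you conflate the two functors $\on{CT}_{\on{co},?}:=(\Eis_!)^\vee$ and $\on{CT}_{\on{co},*}:=(\Eis_*)^\vee$. When you write that the pairing $\langle j^*\Eis_!(\CG), j^*_{\on{co}}(\CF)\rangle_U$ ``gets rewritten, by adjunction along the restricted diagram,'' as a pairing against $j_M^*\circ \on{CT}_{\on{co},*}(\CF)$, this is false. Duality in the pairing gives
\[
\langle \Eis_!(\CG),\CF\rangle_{\Bun_G}=\langle \CG,(\Eis_!)^\vee(\CF)\rangle_{\Bun_M}
=\langle \CG,\on{CT}_{\on{co},?}(\CF)\rangle_{\Bun_M},
\]
and restricting to quasi-compact opens does not change this: the dual of the restricted $\Eis_!$ is still a restricted version of $\on{CT}_{\on{co},?}$, not of $\on{CT}_{\on{co},*}$. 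The $(\Eis_!,\on{CT}_*)$-adjunction you cite is an adjunction on $\Hom$'s, not on the duality pairing; passing to duals in that adjunction yields the pair $(\Eis_{\on{co},*},\on{CT}_{\on{co},?})$, again landing you on $\on{CT}_{\on{co},?}$. So your argument, carried out correctly, simply restates the definition of $\Dmod(\Bun_G)_{\on{co,cusp}}$ and proves nothing about $\on{CT}_{\on{co},*}$.

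The entire content of the proposition is precisely that the vanishing of $\on{CT}_{\on{co},?}(\CF)$ is equivalent to the vanishing of $\on{CT}_{\on{co},*}(\CF)$ under the support hypothesis, and this is \emph{not} formal. The paper's proof reduces both conditions, via the pairing and the support hypothesis, to the vanishing of $\langle j^*\circ\Eis_!(\CF_M),\CF_U\rangle_U$ and $\langle j^*\circ\Eis_*(\CF_M),\CF_U\rangle_U$ respectively, for all $\CF_M\in\Dmod(\Bun_M)$. The equivalence of these two vanishings then requires a genuine geometric input, \propref{p:via BunP tilde}: the object $\Eis_*(\CF_M)$ carries a filtration with subquotients of the form $\Eis_!(\CF_M^\alpha)$ (coming from the stratification of Drinfeld's compactification $\wt\Bun_P$), which becomes \emph{finite} after applying $j^*$; and conversely $j^*\Eis_!(\CF_M)$ is a finite successive extension of objects $j^*\Eis_*(\CF_M^\alpha)$. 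Neither direction is tautological --- your ``only if'' sketch is equally incomplete, since the remark you cite is a \emph{consequence} of this proposition, not an input to it.
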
 

\begin{proof}

Recall that 
$$\langle-,-\rangle_{\Bun_G}: \Dmod(\Bun_G)_{\on{co}}\times \Dmod(\Bun_G) \to \Vect$$
denotes the pairing corresponding to the identification
$$\bD_{\Bun_G}:\Dmod(\Bun_G)^\vee\simeq \Dmod(\Bun_G)_{\on{co}}.$$

\medskip

On the one hand, by \corref{c:Eis op}, for $\CF_G\in \Dmod(\Bun_G)_{\on{co}}$, the condition that $\CF_G$ be right-orthogonal
to the essential image of $\Eis_{\on{co},*}$ for a given parabolic $P$ is equivalent to
$$\langle \Eis_!(\CF_M),\CF_G\rangle_{\Bun_G}=0,\quad \CF_M\in \Dmod(\Bun_M).$$ 

If $\CF_G= j_{\on{co},*}(\CF_U)$, then the above is equivalent to
$$\langle j^*\circ \Eis_!(\CF_M),\CF_U\rangle_U=0,$$
where
$$\langle-,-\rangle_{U}: \Dmod(U)_{\on{co}}\times \Dmod(U) \to \Vect$$
is the pairing corresponding to $\bD_U:\Dmod(U)^\vee\simeq \Dmod(U)$. 

\medskip

On the other hand, the condition that $\on{CT}_{\on{co},*}(\CF_G)=0$ for the same parabolic 
is equivalent to
$$\langle \Eis_*(\CF_M),\CF_G\rangle_{\Bun_G},$$
i.e.,
$$\langle j^*\circ \Eis_*(\CF_M),\CF_U\rangle_U=0.$$ 

\medskip

Hence, the assertion of \propref{p:cusp co via CT} follows from the next one, proved in \secref{ss:proof of tilde}:

\begin{prop}  \label{p:via BunP tilde} \hfill

\medskip

\noindent{\em(a)} 
For $\CF_M\in \Dmod(\Bun_M)$, the object $\Eis_*(\CF_M)$ admits an increasing filtration (indexed by a poset)
with subquotients of the form $\Eis_!(\CF^\alpha_M)$, $\CF^\alpha_M\in \Dmod(\Bun_M)$. 

\medskip

\noindent{\em(b)} Assume that $\CF_M$ is supported on finitely many connected components of $\Bun_M$, and
let $(U\overset{j}\hookrightarrow \Bun_G) \in \on{op-qc}(G)$. Then: 

\smallskip

\noindent{\em(i)} The objects $j^* \circ \Eis_!(\CF^\alpha_M)$ from point \emph{(a)} are zero for
all but finitely many $\alpha$'s. 

\smallskip

\noindent{\em(ii)} The object $j^* \circ \Eis_!(\CF_M)$ is a finite successive extension of objects of the form
$j^* \circ \Eis_*(\CF^\alpha_M)$, $\CF^\alpha_M\in \Dmod(\Bun_M)$. 

\end{prop} 

\end{proof}

\sssec{}

We now observe:

\begin{prop} \label{p:two cusp}
Let $\CF\in \Dmod(\Bun_G)_{\on{co}}$ be such that there exists
$(U_G\overset{j_G}\hookrightarrow \Bun_G) \in \on{op-qc}(G)$ such that the map
$$\CF\to (j_G)_{\on{co},*}\circ (j_G)^*_{\on{co}}(\CF)$$
is an isomorphism. Then $\psId_{\Bun_G,\on{naive}}(\CF)\in \Dmod(\Bun_G)_{\on{cusp}}$
if and only if $\on{CT}_{\on{co},*}(\CF)=0$ for all proper parabolics.
\end{prop}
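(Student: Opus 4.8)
The plan is to reduce \propref{p:two cusp} to \propref{p:cusp co via CT} by combining it with the naive pseudo-identity's compatibility with constant term functors, established in \propref{p:naive and functors}, together with the support statement for $\psId_{\Bun_G,\on{naive}}$ coming from \corref{c:naive and res}.

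First I would record the key observation that, under the hypothesis on $\CF$, we have a canonical isomorphism
$$\psId_{\Bun_G,\on{naive}}(\CF)\simeq (j_G)_*\circ (j_G)^*_{\on{co}}(\CF).$$
Indeed, $\CF\simeq (j_G)_{\on{co},*}\circ (j_G)^*_{\on{co}}(\CF)$ by assumption, and $\psId_{\Bun_G,\on{naive}}\circ (j_G)_{\on{co},*}\simeq (j_G)_*$ by the very definition of the naive pseudo-identity functor. In particular, $j_G^*\circ \psId_{\Bun_G,\on{naive}}(\CF)\simeq (j_G)^*_{\on{co}}(\CF)$ (which also follows directly from \corref{c:naive and res}), so setting $\CF':=\psId_{\Bun_G,\on{naive}}(\CF)\in \Dmod(\Bun_G)$, the object $\CF'$ itself satisfies the ``support'' condition that the map $\CF'\to (j_G)_*\circ (j_G)^*(\CF')$ be an isomorphism.

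Next, I would transport the condition ``$\CF'\in \Dmod(\Bun_G)_{\on{cusp}}$'' across the duality. By definition, $\CF'$ is cuspidal iff $\on{CT}^\mu_*(\CF')=0$ for all proper parabolics and all $\mu$. On the other hand, I want to compare this to the condition $\on{CT}_{\on{co},*}(\CF)=0$. Here the natural tool is \propref{p:naive and functors}, which gives $\psId_{\Bun_M,\on{naive}}\circ \on{CT}_{\on{co},*}\simeq \on{CT}_*\circ \psId_{\Bun_G,\on{naive}}$, hence
$$\psId_{\Bun_M,\on{naive}}\circ \on{CT}_{\on{co},*}(\CF)\simeq \on{CT}_*(\CF').$$
So $\on{CT}_{\on{co},*}(\CF)=0$ immediately implies $\on{CT}_*(\CF')=0$, i.e.\ one implication of \propref{p:two cusp} is free. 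For the converse I cannot simply invert $\psId_{\Bun_M,\on{naive}}$ (it is not an equivalence), so instead I would argue as follows: by \propref{p:CT co expl}, $\on{CT}_{\on{co},*}(\CF)$ is of the form $(j_M)_{\on{co},*}\circ (j_M)^*\circ \on{CT}_*\circ (j_G)_*\circ (j_G)^*_{\on{co}}(\CF)$ for a suitable $U_M$, and using the support isomorphism $\CF'\simeq (j_G)_*\circ (j_G)^*(\CF')$ this rewrites as $(j_M)_{\on{co},*}\circ (j_M)^*\circ \on{CT}_*(\CF')$. Since $j_{M,\on{co},*}$ is fully faithful (\lemref{l:j ff}) and $\CF'\in \Dmod(\Bun_G)_{\on{cusp}}$ forces $\on{CT}_*(\CF')=0$, we get $\on{CT}_{\on{co},*}(\CF)=0$, which is the desired converse. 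This also shows the two conditions appearing in \propref{p:cusp co via CT} and \propref{p:two cusp} are literally the same, and then \propref{p:cusp co via CT} identifies ``$\on{CT}_{\on{co},*}(\CF)=0$'' with ``$\CF\in \Dmod(\Bun_G)_{\on{co,cusp}}$''.

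I expect the main obstacle to be bookkeeping with the choice of quasi-compact open substacks: one must choose $U_M\in \on{op-qc}(M)$ compatibly with the given $U_G$ (so that $\sfq(\sfp^{-1}(U_G))\subset U_M$, as in \propref{p:CT co expl}) while simultaneously keeping the support hypotheses on $\CF$ and $\CF'$ intact, and one should double-check that the isomorphism $\on{CT}_*(\CF')\simeq (j_M)^*\circ \on{CT}_*\circ (j_G)_*\circ (j_G)^*(\CF')$ is the one arising from base change along the relevant Cartesian square, so that it is compatible with the isomorphisms of \propref{p:naive and functors}. Modulo this compatibility, the argument is a formal consequence of \propref{p:cusp co via CT}, \propref{p:naive and functors}, \propref{p:CT co expl}, \lemref{l:j ff} and \corref{c:naive and res}.
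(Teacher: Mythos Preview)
Your argument is correct and follows essentially the same route as the paper's: one implication comes directly from \propref{p:naive and functors}, and the converse uses \propref{p:CT co expl} to rewrite $\on{CT}_{\on{co},*}(\CF)$ as $(j_M)_{\on{co},*}\circ (j_M)^*\circ \on{CT}_*\circ \psId_{\Bun_G,\on{naive}}(\CF)$, which then vanishes when $\psId_{\Bun_G,\on{naive}}(\CF)$ is cuspidal. The framing of the plan as a ``reduction to \propref{p:cusp co via CT}'' is slightly misleading---that proposition is not actually needed here (the paper does not invoke it either), and your own argument establishes the biconditional directly---but this is harmless, and the last sentence of your second paragraph can simply be dropped.
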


\begin{proof}

We claim that for $\CF$ satisfying the condition of the proposition, for a given parabolic $P$,
$$\on{CT}_*\circ \psId_{\Bun_G,\on{naive}}(\CF)=0 \,\Leftrightarrow\, \on{CT}_{\on{co},*}(\CF)=0.$$

Indeed, the implication $\Leftarrow$ holds for \emph{any} $\CF$ by \propref{p:naive and functors}.

\medskip

Conversely, let $(U_M\overset{j_M}\hookrightarrow \Bun_M) \in \on{op-qc}(M)$ be as in \propref{p:CT co expl}. 
For $$\CF\simeq (j_G)_{\on{co},*}(\CF_{U_G}),$$ 
by \propref{p:CT co expl}, we have
\begin{multline*}
\on{CT}_{\on{co},*}(\CF)\simeq (j_M)_{\on{co},*}\circ 
(j_M)^*\circ \on{CT}_*\circ (j_G)_*(\CF_{U_G})\simeq \\
\simeq (j_M)_{\on{co},*}\circ (j_M)^*\circ \on{CT}_*\circ \psId_{\Bun_G,\on{naive}}\circ (j_G)_{\on{co},*}(\CF_{U_G})\simeq \\
\simeq (j_M)_{\on{co},*}\circ (j_M)^*\circ \on{CT}_*\circ\psId_{\Bun_G,\on{naive}}(\CF).
\end{multline*}

\end{proof}

\sssec{}

Combining Propositions \ref{p:!-supp cusp co}, \ref{p:cusp co via CT} and \ref{p:two cusp} we obtain:

\begin{cor} \label{c:descr cusp co} 
For $\CF\in \Dmod(\Bun_G)_{\on{co}}$ the following conditions are equivalent:

\smallskip

\noindent{\em(i)} $\CF\in \Dmod(\Bun_G)_{\on{co,cusp}}$;

\smallskip

\noindent{\em(ii)} There exists $(U\overset{j}\hookrightarrow \Bun_G) \in \on{op-qc}(G)$ such that the map
$\CF\to j_{\on{co},*}\circ j^*_{\on{co}}(\CF)$ is an isomorphism \emph{and}
$\psId_{\Bun_G,\on{naive}}(\CF)\in \Dmod(\Bun_G)_{\on{cusp}}$.

\smallskip

\noindent{\em(ii')} There exists $(U\overset{j}\hookrightarrow \Bun_G) \in \on{op-qc}(G)$ such that the map
$\CF\to j_{\on{co},*}\circ j^*_{\on{co}}(\CF)$ is an isomorphism \emph{and} $\on{CT}_{\on{co},*}(\CF)=0$ for all proper parabolics.

\smallskip

\noindent{\em(iii)} For $(\CU_G\overset{\jmath_G}\hookrightarrow \Bun_G) \in \on{op-qc}(G)$ as in \propref{p:!-supp cusp},
the map $\CF\to (\jmath_G)_{\on{co},*}\circ (\jmath_G)^*_{\on{co}}(\CF)$
is an isomorphism \emph{and}
$\psId_{\Bun_G,\on{naive}}(\CF)\in \Dmod(\Bun_G)_{\on{cusp}}$.

\smallskip

\noindent{\em(iii')} For $(\CU_G\overset{\jmath_G}\hookrightarrow \Bun_G) \in \on{op-qc}(G)$ as in \propref{p:!-supp cusp},
the map $\CF\to (\jmath_G)_{\on{co},*}\circ (\jmath_G)^*_{\on{co}}(\CF)$
is an isomorphism \emph{and} $\on{CT}_{\on{co},*}(\CF)=0$ for all proper parabolics.

\end{cor}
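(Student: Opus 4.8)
The statement is a matter of bookkeeping: all five conditions will be deduced equivalent by feeding them through the ``intrinsic'' criterion of \propref{p:cusp co via CT} and the comparison of \propref{p:two cusp}. Concretely, I would prove the cycle of implications
$$\text{(i)}\Rightarrow\text{(iii')}\Rightarrow\text{(ii')}\Rightarrow\text{(i)}$$
and then the two remaining equivalences $\text{(ii)}\Leftrightarrow\text{(ii')}$ and $\text{(iii)}\Leftrightarrow\text{(iii')}$.

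For $\text{(i)}\Rightarrow\text{(iii')}$: suppose $\CF\in\Dmod(\Bun_G)_{\on{co,cusp}}$. By \propref{p:!-supp cusp co}, applied to the distinguished open $(\CU_G\overset{\jmath_G}\hookrightarrow\Bun_G)\in\on{op-qc}(G)$ of \propref{p:!-supp cusp}, the map $\CF\to(\jmath_G)_{\on{co},*}\circ(\jmath_G)^*_{\on{co}}(\CF)$ is an isomorphism; this is the ``support'' half of (iii'), and $\CU_G$ is in particular a legitimate choice of the open substack figuring in the hypothesis of \propref{p:cusp co via CT}. That proposition then gives $\on{CT}_{\on{co},*}(\CF)=0$ for all proper parabolics, the remaining half of (iii'). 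The implication $\text{(iii')}\Rightarrow\text{(ii')}$ is trivial, since $\CU_G\in\on{op-qc}(G)$ witnesses the existential clause of (ii'). For $\text{(ii')}\Rightarrow\text{(i)}$, the two clauses of (ii') are exactly the hypothesis and the ``$\on{CT}_{\on{co},*}=0$'' side of \propref{p:cusp co via CT}, so that proposition returns $\CF\in\Dmod(\Bun_G)_{\on{co,cusp}}$.

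It remains to interchange the two incarnations of the cuspidality clause, namely ``$\psId_{\Bun_G,\on{naive}}(\CF)\in\Dmod(\Bun_G)_{\on{cusp}}$'' and ``$\on{CT}_{\on{co},*}(\CF)=0$ for all proper parabolics''. This is precisely \propref{p:two cusp}, whose only hypothesis is the existence of a $U\in\on{op-qc}(G)$ with $\CF\to j_{\on{co},*}\circ j^*_{\on{co}}(\CF)$ an isomorphism. Applying it under the common support clause of (ii) and (ii') gives $\text{(ii)}\Leftrightarrow\text{(ii')}$; applying it with $U=\CU_G$ gives $\text{(iii)}\Leftrightarrow\text{(iii')}$. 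Combined with the cycle above, this establishes the equivalence of all five conditions.

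I do not anticipate any genuine obstacle at this stage, since the real content has already been isolated in the three cited propositions. The one point demanding care is that conditions (ii) and (ii') merely assert the existence of \emph{some} suitable $U$, whereas (i) carries no a priori support information; bridging this gap is exactly the purpose of \propref{p:!-supp cusp co} (obtained in turn by dualizing the $*$-support statement \propref{p:!-supp cusp} for the ordinary cuspidal category), so I would make sure to route the argument through it rather than take the support property for granted.
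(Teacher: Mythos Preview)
Your proposal is correct and matches the paper's approach exactly: the paper simply records that the corollary follows by combining \propref{p:!-supp cusp co}, \propref{p:cusp co via CT}, and \propref{p:two cusp}, and what you have written is precisely the spelled-out version of that combination.
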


\sssec{Proof of \thmref{t:cusp naive equiv}}

From \corref{c:descr cusp co} we obtain that the functor $\psId_{\Bun_G,\on{naive}}$ sends
$$\Dmod(\Bun_G)_{\on{co,cusp}}\to \Dmod(\Bun_G)_{\on{cusp}}.$$

\medskip

We construct the inverse functor as follows. Let $(\CU_G\overset{\jmath_G}\hookrightarrow \Bun_G) \in \on{op-qc}(G)$
be as in \propref{p:!-supp cusp}. The sought-for functor
$$\Dmod(\Bun_G)_{\on{cusp}}\to \Dmod(\Bun_G)_{\on{co}}$$ is 
$$\CF\mapsto (\jmath_G)_{\on{co},*}\circ (\jmath_G)^*(\CF).$$

We claim that the image of this functor lands in $\Dmod(\Bun_G)_{\on{co,cusp}}$. Indeed, by \propref{p:two cusp}, its suffices
to check that
$$\psId_{\Bun_G,\on{naive}}\circ  (\jmath_G)_{\on{co},*}\circ (\jmath_G)^*(\CF)\in \Dmod(\Bun_G)_{\on{cusp}}.$$

However,
$$\psId_{\Bun_G,\on{naive}}\circ  (\jmath_G)_{\on{co},*}\circ (\jmath_G)^*(\CF)\simeq
(\jmath_G)_*\circ (\jmath_G)^*(\CF),$$
and the latter is isomorphic to $\CF$ by \propref{p:two cusp}.

\medskip

Let us now check that the two functors are inverses of each other. However, we have just shown that the composition
$$\Dmod(\Bun_G)_{\on{cusp}} \to \Dmod(\Bun_G)_{\on{co,cusp}}\to \Dmod(\Bun_G)_{\on{cusp}}$$
is isomorphic to the identity functor. 

\medskip

For the composition in the other direction, for $\CF\in \Dmod(\Bun_G)_{\on{co,cusp}}$ we consider
$$(\jmath_G)_{\on{co},*}\circ (\jmath_G)^*\circ \psId_{\Bun_G,\on{naive}}(\CF),$$
which by \corref{c:naive and res} is isomorphic to
$$(\jmath_G)_{\on{co},*}\circ (\jmath_G)^*_{\on{co}}(\CF),$$
and the latter is isomorphic to $\CF$ by \propref{p:!-supp cusp co}. 

\qed

\ssec{Proof of \propref{p:via BunP tilde}}  \label{ss:proof of tilde}

\sssec{}

The proof of the proposition uses the relative compactification $\Bun_P\overset{r}\hookrightarrow \wt\Bun_P$
of the map $\sfp$, introduced in \cite[Sect. 1.3.6]{BG}:

\begin{gather}  \label{e:compact}
\xy
(-20,0)*+{\Bun_G}="X";
(20,0)*+{\Bun_M}="Y";
(0,20)*+{\wt{\Bun}_P}="Z";
(-20,20)*+{\Bun_P}="W";
{\ar@{->}_{\wt{\sfp}} "Z";"X"};
{\ar@{->}^{\wt\sfq} "Z";"Y"};
{\ar@{^{(}->}^r "W";"Z"};  
\endxy
\end{gather}

\medskip

Note that for $\CF_M\in \Dmod(\Bun_M)$, we have
$$\Eis_*(\CF_M)\simeq \wt\sfp_*\left(\wt\sfq^!(\CF_M)\sotimes r_*(\omega_{\Bun_P})\right)\simeq
\wt\sfp_!\left(\wt\sfq^!(\CF_M)\sotimes r_*(\omega_{\Bun_P})\right),$$
the latter isomorphism due to the fact that $\wt\sfp$ is proper.  Here the notation $\sotimes$ (and, in the sequel, $\overset{*}\otimes$)
follows \cite[Sect. 1.1.5]{DrGa3}. 

\medskip

Recall now that according to \cite[Theorem 5.1.5]{BG}, the object
$$r_*(\omega_{\Bun_P})\in \Dmod(\wt\Bun_P)$$
is \emph{universally locally acyclic} (a.k.a. ULA)\footnote{See \cite[Sect. 1.1.5]{DrGa3} for what the ULA condition means.}
with respect to the map $\wt\sfq$. This implies
that 
$$\wt\sfq^!(\CF_M)\sotimes r_*(\omega_{\Bun_P})\simeq \wt\sfq^*(\CF_M)\overset{*}\otimes r_*(\omega_{\Bun_P})[-2\dim(\Bun_M)].$$

Thus, we obtain that, up to a cohomological shift, $\Eis_*(\CF_M)$ is isomorphic to 

\begin{equation} \label{e:exp for Eis* via comp}
\wt\sfp_!\left(\wt\sfq^*(\CF_M)\overset{*}\otimes r_*(\omega_{\Bun_P})\right).
\end{equation}

\sssec{}

Let $\Lambda_{G,P}^{\on{pos}}$ be the monoid of linear combinations
$$\theta=\underset{i}\Sigma\, n_i\cdot \alpha_i,$$
where $n_i\in \BZ^{\geq 0}$ and $\alpha_i$ is a simple coroot of $G$, which is not in $M$. 

\medskip

For each $\theta$, we let $\on{Mod}^{\theta,+}_{\Bun_M}$ be a version of the Hecke stack, introduced in 
\cite[Sect. 3.1]{BFGM}:
$$
\xy
(-20,0)*+{\Bun_M}="X";
(20,0)*+{\Bun_M.}="Y";
(0,20)*+{\on{Mod}^{\theta,+}_{\Bun_M}}="Z";
{\ar@{->}_{\hl} "Z";"X"};
{\ar@{->}^{\hr} "Z";"Y"};
\endxy
$$

Set
$$\on{Mod}^{\theta,+}_{\Bun_P}:=\Bun_P\underset{\Bun_M}\times \on{Mod}^{\theta,+}_{\Bun_M},$$
where the fiber product is formed using the map $\hl:\on{Mod}^{\theta,+}_{\Bun_M}\to \Bun_M$. 

\medskip 

According to \cite[Proposition 6.2.5]{BG}, there is a canonically defined locally closed embedding
$$r^\theta:\on{Mod}^{\theta,+}_{\Bun_P}\to \wt\Bun_P,$$ making the following diagram commute
$$
\xy
(-20,0)*+{\Bun_M}="X";
(20,0)*+{\Bun_M.}="Y";
(0,20)*+{\on{Mod}^{\theta,+}_{\Bun_M}}="Z";
(-20,30)*+{\Bun_P}="W";
(0,50)*+{\on{Mod}^{\theta,+}_{\Bun_P}}="U";
(20,40)*+{\wt\Bun_P}="V";
{\ar@{->}_{\hl} "Z";"X"};
{\ar@{->}^{\hr} "Z";"Y"};
{\ar@{->}_{\sfq} "W";"X"};
{\ar@{->}_{'\sfq} "U";"Z"};
{\ar@{->}_{'\hl} "U";"W"};
{\ar@{->}^{\wt\sfq} "V";"Y"};
{\ar@{->}^{r^\theta} "U";"V"};
\endxy
$$
(The right diamond is intentionally lopsided to emphasize that it is \emph{not} Cartesian.) 

\medskip

Furthermore, 
\begin{equation} \label{e:decomp tilde}
\wt\Bun_P=\underset{\theta\in \Lambda_{G,P}^{\on{pos}}}\bigsqcup\, r^\theta(\on{Mod}^{\theta,+}_{\Bun_P}).
\end{equation}

For $\theta=0$, the map $\hl$ is an isomorphism, and the resulting map
$$\Bun_P\simeq \on{Mod}^{0,+}_{\Bun_P}\overset{r^0}\hookrightarrow \wt\Bun_P$$
is the map $r$ in \eqref{e:compact}.

\medskip

The following is easy to see from the construction:

\begin{lem} \label{l:fin theta}
For  $(U\overset{j}\hookrightarrow \Bun_G) \in \on{op-qc}(G)$ and $\mu\in \pi_1(M)$, the preimage of $U\times \Bun_M^\mu$ under the map
$$\on{Mod}^{\theta,+}_{\Bun_P}\overset{r^\theta}\hookrightarrow \wt\Bun_P\overset{\wt\sfp\times \wt\sfq}\longrightarrow
\Bun_G\times \Bun_M$$
is empty for all but finitely many elements $\theta$.
\end{lem}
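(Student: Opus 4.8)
The plan is to extract from the geometry of $\wt\Bun_P$ a single numerical inequality that confines $\theta$ to a finite set. First I would unwind the maps. Since $\on{Mod}^{\theta,+}_{\Bun_P}=\Bun_P\underset{\Bun_M}\times\on{Mod}^{\theta,+}_{\Bun_M}$ (the fiber product taken over $\hl$), a point $x$ of $\on{Mod}^{\theta,+}_{\Bun_P}$ lying over $U\times\Bun_M^\mu$ consists of an honest $P$-bundle $\F_P$ together with a point of $\on{Mod}^{\theta,+}_{\Bun_M}$ whose $\hl$-component is $\F_M:=\sfq(\F_P)$; writing $\F_M'$ for its $\hr$-component, the map $r^\theta$ followed by $\wt\sfp\times\wt\sfq$ sends $x$ to $(\sfp(\F_P),\F_M')$, so the conditions are $\F_G:=\sfp(\F_P)\in U$ and $\F_M'\in\Bun_M^\mu$. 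Write $\theta=\sum_i n_i\,\alpha_i$ with $n_i\in\ZZ^{\geq 0}$, the sum over the simple coroots of $G$ not in $M$, and let $\omegach_i$ denote the corresponding fundamental weights, viewed as characters of $P$ and of $M$.

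Next I would record the structural input: according to \cite[Prop.~6.2.5]{BG} and the ``defect'' description of the strata of $\wt\Bun_P$ (a boundary point encodes an honest reduction together with the zero divisor of the degenerating reduction data), the $M$-bundles $\F_M$ and $\F_M'$ are related by a positive modification of type $\theta$, with $\F_M$ the more positive of the two. Pairing with $\omegach_i$ this gives, for every $i$,
\[
\deg_{\omegach_i}(\F_M)=\deg_{\omegach_i}(\F_M')+\langle\omegach_i,\theta\rangle=\deg_{\omegach_i}(\F_M')+n_i .
\]
As a character of $M$ induces a locally constant degree function on $\Bun_M$, the integer $d_i:=\deg_{\omegach_i}(\F_M')$ depends only on $\mu$, and hence $\deg_{\omegach_i}(\F_M)=d_i+n_i$.

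Then I would bound $\deg_{\omegach_i}(\F_M)$ from above using quasi-compactness of $U$. The reduction $\F_P$ of $\F_G$ to $P$ provides, via the $P$-stable highest-weight line of the irreducible $G$-module $V^{\omegach_i}$, an injection of coherent sheaves $\L^{\omegach_i}_{\F_P}\hookrightarrow V^{\omegach_i}_{\F_G}$; since $\L^{\omegach_i}_{\F_P}$ is a line bundle of degree $\deg_{\omegach_i}(\F_M)$, this number is at most the largest degree of a line subbundle of $V^{\omegach_i}_{\F_G}$. As $\F_G$ ranges over the quasi-compact $U$ --- and there are only finitely many indices $i$ --- the bundles $V^{\omegach_i}_{\F_G}$ form a bounded family, so this upper bound may be taken to be a single constant $C_U$. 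Combining the two displays, $0\le n_i\le C_U-d_i$ for every $i$, which leaves only finitely many $\theta$.

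The point that needs genuine care is the sign asserted above: it is the Levi $\F_M=\sfq(\F_P)$ of the honest reduction, and not $\F_M'=\wt\sfq(r^\theta(x))$, that is the more positive $M$-bundle. This is forced by the shape of the compactification --- its boundary points are exactly those at which the Pl\"ucker-type data acquire zeros, so the saturated reduction necessarily has the larger degree --- and it is this that \cite[Prop.~6.2.5]{BG} records. With the sign settled, the remaining steps are routine.
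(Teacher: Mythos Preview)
Your argument is correct and is precisely the natural unpacking of what the paper leaves implicit (the paper offers no proof, only the remark that the statement ``is easy to see from the construction''): the degree shift along the $\theta$-stratum together with the boundedness of line subbundles over a quasi-compact $U$ forces $\theta$ into a finite box. One small caveat worth noting in a polished write-up: for non--simply connected $G$ the fundamental weights $\omegach_i$ need not lie in the weight lattice, so $V^{\omegach_i}$ may not exist as a $G$-module; the fix is to run the same argument with dominant integral weights $\check\lambda$ orthogonal to the coroots of $M$ (as in the Pl\"ucker description of $\wt\Bun_P$ in \cite{BG}), which suffices to separate and bound the coefficients $n_i$.
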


\sssec{}  \label{sss:fin theta}

The decomposition \eqref{e:decomp tilde} endows the object 
$$r_*(\omega_{\Bun_P})\in \Dmod(\wt\Bun_P)$$
with an increasing filtration, indexed by the poset $\Lambda_{G,P}^{\on{pos}}$, with the $\theta$ subquotient
equal to
$$(r^\theta)_!\circ (r^\theta)^*\circ r_*(\omega_{\Bun_P}).$$

\medskip

Hence, by the projection formula, the object in \eqref{e:exp for Eis* via comp} admits a filtration, indexed
by $\Lambda_{G,P}^{\on{pos}}$, with the $\theta$ subquotient equal to
\begin{equation} \label{e:expr for subquotient}
\wt\sfp_!\circ r^\theta_!\left((r^\theta)^*\circ \wt\sfq^*(\CF_M)\overset{*}\otimes (r^\theta)^*\circ r_*(\omega_{\Bun_P})\right).
\end{equation}

Moreover, if $\CF_M$ is supported on finitely many components of $\Bun_M$, 
the restriction of the subquotient \eqref{e:expr for subquotient} to $U\in \on{op-qc}(G)$ is zero for all but finitely many $\theta$
by \lemref{l:fin theta}. 

\sssec{}

We have the following assertion, proved by the same argument as \cite[Theorem 6.2.10]{BG}:

\begin{lem}  \label{l:on stratum}
The object $(r^\theta)^*\circ r_*(\omega_{\Bun_P})\in \Dmod(\on{Mod}^{\theta,+}_{\Bun_P})$ is lisse
when !-restricted to the fiber of the map 
$$'\sfq:\on{Mod}^{\theta,+}_{\Bun_P}\to \on{Mod}^{\theta,+}_{\Bun_M}$$
over any $k$-point of $\on{Mod}^{\theta,+}_{\Bun_M}$. 
\end{lem}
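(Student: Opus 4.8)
The plan is to deduce the statement from the stronger assertion that $(r^\theta)^*\circ r_*(\omega_{\wt\Bun_P})$ is, up to a cohomological shift, the $!$-pullback along ${}'\sfq$ of an object $\CA^\theta\in \Dmod(\on{Mod}^{\theta,+}_{\Bun_M})$; equivalently, under the identification $\on{Mod}^{\theta,+}_{\Bun_P}\simeq \Bun_P\underset{\Bun_M}\times \on{Mod}^{\theta,+}_{\Bun_M}$ (fiber product along $\hl$), that it is an external product relative to $\Bun_M$ whose $\Bun_P$-factor is just the dualizing sheaf $\omega_{\Bun_P}$. Granting this, the lemma follows at once: the morphism ${}'\sfq$ is smooth --- it is the base change along $\hl$ of the smooth morphism $\sfq\colon \Bun_P\to \Bun_M$ (the smoothness of $\sfq$ is already used in \secref{s:functors}) --- so for a $k$-point $y\in \on{Mod}^{\theta,+}_{\Bun_M}$ the $!$-restriction of ${}'\sfq^!(\CA^\theta)$ to the fiber $({}'\sfq)^{-1}(y)$ is $a_y^!\bigl(y^!(\CA^\theta)\bigr)$, where $a_y\colon ({}'\sfq)^{-1}(y)\to \Spec k$ is the structure morphism. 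Since $y^!(\CA^\theta)$ is a fixed object of $\Vect$ and $({}'\sfq)^{-1}(y)$ is a smooth algebraic stack, this is $\omega_{({}'\sfq)^{-1}(y)}$ tensored with a complex of vector spaces, hence lisse.

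The stronger assertion is obtained by transcribing the argument of \cite[Theorem 6.2.10]{BG}. Its input is the factorization structure of the relative compactification $\wt\Bun_P$ along the curve: near the locally closed substack $r^\theta(\on{Mod}^{\theta,+}_{\Bun_P})$, the geometry of $\wt\Bun_P$ decouples into the ``non-degenerate'' $\Bun_P$-direction and a local Zastava-type model governing the degeneration of the defining data of $\wt\Bun_P$, supported at the finitely many points of $X$ recorded by $\theta$. Because $r$, the morphism ${}'\sfq$, and the dualizing sheaf are all compatible with this decoupling, the restriction $(r^\theta)^*\circ r_*(\omega_{\wt\Bun_P})$ splits accordingly: the $\Bun_P$-factor is $\omega_{\Bun_P}$ --- already visible in the case $\theta=0$, where $r^0=r$ is the open immersion and $r^*\circ r_*\simeq\id$ --- while the remaining factor $\CA^\theta$ on $\on{Mod}^{\theta,+}_{\Bun_M}$, a Zastava-type object of the kind considered in \cite{BFGM}, is pinned down from the ULA property of $r_*(\omega_{\wt\Bun_P})$ over $\Bun_M$, \cite[Theorem 5.1.5]{BG}, together with its local counterpart, exactly as in \cite[Sect.~6]{BG}. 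The same point can be phrased through the ``Hecke at a point'' action on $\wt\Bun_P$, which modifies only the $M$-bundle and leaves the $\Bun_P$-direction untouched: then $(r^\theta)^*\circ r_*(\omega_{\wt\Bun_P})$ is the Hecke convolution of $\omega_{\Bun_P}$ with a kernel supported on $\on{Mod}^{\theta,+}_{\Bun_M}$.

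The main --- and essentially the only --- obstacle is this transcription: setting up the local Zastava model for $\wt\Bun_P$, verifying that $r_*(\omega_{\wt\Bun_P})$ is compatible with its factorization along $X$, and identifying the local object $\CA^\theta$; this amounts to carrying over the constructions of \cite[Sects.~5, 6]{BG}. Everything else --- the smoothness of $\sfq$, hence of ${}'\sfq$, and the lisseness of the dualizing complex of a smooth algebraic stack --- is routine.
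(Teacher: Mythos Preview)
Your proposal is correct. The paper gives no proof of this lemma beyond the remark ``proved by the same argument as \cite[Theorem 6.2.10]{BG}'', and your sketch is a faithful outline of what that argument supplies.

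There is one structural difference worth flagging. You aim to establish directly the stronger claim that $(r^\theta)^*\circ r_*(\omega_{\wt\Bun_P})$ is a pullback along ${}'\sfq$, and then read off lisseness on fibers as an immediate consequence. The paper runs this in the opposite direction: the lemma (lisseness on fibers) is what it attributes to the BG argument, and the pullback form is recorded separately as \corref{c:on stratum}, deduced from the lemma together with three additional inputs --- smoothness of ${}'\sfq$, regular-singular holonomicity of the object, and contractibility of the fibers of ${}'\sfq$. Your route is more direct and sidesteps that extra step, which is legitimate since the Zastava/factorization argument in \cite{BG} really does produce the product decomposition you describe; the paper's route makes a more modest claim about what the cited argument literally gives and keeps the passage from ``lisse on fibers'' to ``pullback from the base'' visible as a separate (D-module--specific) step.
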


\begin{cor} \label{c:on stratum}
$(r^\theta)^*\circ r_*(\omega_{\Bun_P})\simeq {}'\sfq^*(\CK^\theta)$ for 
some $\CK^\theta\in \Dmod(\on{Mod}^{\theta,+}_{\Bun_M})$.
\end{cor}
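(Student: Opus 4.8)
The plan is to deduce the corollary from \lemref{l:on stratum} using the geometry of the morphism ${}'\sfq$. First I would record its structure. Since $\on{Mod}^{\theta,+}_{\Bun_P}=\Bun_P\underset{\Bun_M}\times \on{Mod}^{\theta,+}_{\Bun_M}$ is formed using the map $\hl$, the morphism ${}'\sfq$ is obtained from $\sfq:\Bun_P\to\Bun_M$ by base change along $\hl$; in particular it is smooth. Moreover, $\sfq$ is a fibration into affine spaces: Zariski-locally on (a fixed connected component of) $\Bun_M$ it is an iterated affine bundle — the bundle of reductions of an $M$-bundle to $P$, filtered by the lower central series of the unipotent radical $U(P)$ — and the embedding $M\hookrightarrow P$ provides a section of $\sfq$ onto which these affine-space fibres are contracted by a canonical $\GG_m$-action scaling the unipotent directions. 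After base change the same holds for ${}'\sfq$: it is smooth, its fibres are contractible affine spaces, it admits a section $s:\on{Mod}^{\theta,+}_{\Bun_M}\to \on{Mod}^{\theta,+}_{\Bun_P}$, and there is a fibrewise-contracting $\GG_m$-action having $s$ as its limit.

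Given this, the corollary is equivalent to the assertion that the object
$$\mathcal G^\theta:=(r^\theta)^*\circ r_*(\omega_{\wt\Bun_P})$$
is constant along the fibres of ${}'\sfq$, i.e. that the canonical map ${}'\sfq^*\circ s^*(\mathcal G^\theta)\to \mathcal G^\theta$ furnished by the contraction is an isomorphism; one then takes $\CK^\theta:=s^*(\mathcal G^\theta)$. The statement being local on $\on{Mod}^{\theta,+}_{\Bun_M}$, I would reduce to a single fibre $V$, a contractible affine space. On $V$ the object $\mathcal G^\theta$ is lisse by \lemref{l:on stratum}, and it is monodromic (equivariant) for the contracting $\GG_m$-action — this equivariance is inherited from the corresponding $\GG_m$-structure on $r_*(\omega_{\wt\Bun_P})\in \Dmod(\wt\Bun_P)$, set up exactly as in the proof of \cite[Theorem~6.2.10]{BG} and compatible with the stratification \eqref{e:decomp tilde}. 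A lisse, $\GG_m$-monodromic object on an affine space contracted by the action onto a point is necessarily the constant sheaf tensored with the (graded) vector space of its fibre at that point — the contraction principle. Feeding this back over all the fibres of ${}'\sfq$ yields the required isomorphism.

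The step I expect to be the main obstacle is the verification, inside this single-fibre reduction, that $\mathcal G^\theta$ is \emph{not merely} lisse along the fibres of ${}'\sfq$ but carries the $\GG_m$-monodromic structure needed to apply the contraction principle; this is precisely the content imported from \cite[Theorem~6.2.10]{BG}, to which \lemref{l:on stratum} itself already appeals. A secondary technical point is that $\sfq$, and hence ${}'\sfq$, has unbounded relative dimension over all of $\Bun_M$, so the reduction to affine bundles and the contraction argument must be carried out one connected component of $\Bun_M$ at a time; this is harmless, since the decomposition \eqref{e:decomp tilde} and all the objects in sight respect the decomposition into connected components.
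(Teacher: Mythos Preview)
Your approach differs from the paper's. The paper's argument is short: (1) ${}'\sfq$ is smooth; (2) the object $(r^\theta)^*\circ r_*(\omega_{\wt\Bun_P})$ is holonomic with regular singularities, which is immediate since it is obtained from $\omega_{\Bun_P}$ by an open pushforward followed by a locally closed restriction; (3) the fibres of ${}'\sfq$ are contractible, so via Riemann--Hilbert any RS lisse object on such a fibre is \emph{canonically} trivial. The canonicity is what makes the fibrewise trivialisations glue --- concretely, the counit ${}'\sfq^*\,{}'\sfq_*(\mathcal G^\theta)\to\mathcal G^\theta$ is an isomorphism because it is one over each fibre.

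Your contraction-principle route is a legitimate alternative and has the merit of not depending on RS or Riemann--Hilbert, so it would carry over to the $\ell$-adic context the paper mentions in its introduction. The cost is exactly the obstacle you flag: you need $\mathcal G^\theta$ to be $\GG_m$-monodromic for the fibrewise scaling action, and the justification you offer --- a $\GG_m$-structure on $r_*(\omega_{\wt\Bun_P})$ ``set up exactly as in the proof of \cite[Theorem~6.2.10]{BG}'' --- does not quite land. That proof proceeds by factorisation and comparison with Zastava models rather than by exhibiting a contracting torus action on $\wt\Bun_P$, and it is not evident that the scaling action on the fibres of $\sfq$ extends across the boundary of $\Bun_P$ inside $\wt\Bun_P$ so as to make $r_*(\omega_{\wt\Bun_P})$ equivariant. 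The paper's RS argument bypasses this entirely: regularity is free from the construction, and contractibility of the fibres then forces the pullback without any equivariance input.
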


\begin{proof}

Follows from \lemref{l:on stratum} plus the combination of the following three facts: (1) the map $'\sfq$ is smooth;
(2) $(r^\theta)^*\circ r_*(\omega_{\Bun_P})$ is holonomic with regular singularities; (3) the fibers of the map
$'\sfq$ are contractible (and hence any RS local system on such a fiber is canonically trivial). 

\end{proof}

\sssec{}
 
By \corref{c:on stratum}, we can rewrite the subquotient \eqref{e:expr for subquotient} as
$$\wt\sfp_!\circ r^\theta_!\left((r^\theta)^*\circ \wt\sfq^*(\CF_M)\overset{*}\otimes ({}'\sfq)^*(\CK^\theta)\right),$$
and further, using the fact that
$$\wt\sfq\circ r^\theta=\hr\circ {}'\sfq \text{ and } \wt\sfp\circ r^\theta=\sfp\circ {}'\hl$$
as
$$\sfp_!\circ {}'\hl_!\circ  {}'\sfq^*\left(\hr^*(\CF_M)\overset{*}\otimes \CK^\theta\right)\simeq
\sfp_!\circ \sfq^* \left(\hl_!(\hr^*(\CF_M)\overset{*}\otimes \CK^\theta)\right).$$

\medskip

To summarize, we identify the subquotient \eqref{e:expr for subquotient} with
$$\Eis_!\left(\hl_!(\hr^*(\CF_M)\overset{*}\otimes \CK^\theta)\right),$$
as required in \propref{p:via BunP tilde}(a). The finiteness assertion in \propref{p:via BunP tilde}(b)(i) follows
from the finiteness at the end of \secref{sss:fin theta}. 

\sssec{}

The proof of \propref{p:via BunP tilde}(b)(ii) is similar, but with the following modification: 

\medskip

Let $k_{\Bun_P}\in  \Dmod(\Bun_P)$ be the ``constant sheaf" D-module, i.e., the Verdier dual of $\omega_{\Bun_P}$.

\medskip

Then the object
$$r_!(k_{\Bun_P})\in \Dmod(\wt\Bun_P)$$
admits a \emph{decreasing} filtration, indexed by the poset $\Lambda_{G,P}^{\on{pos}}$, with 
the $\theta$ subquotient being
$$(r^\theta)_*\circ (r^\theta)^!\circ r_!(k_{\Bun_P}).$$

However, this filtration is finite on the preimage of $U\times \Bun_M^\mu$
for any $U\in \on{op-qc}(G)$ and $\mu\in \pi_1(M)$ under the map $\wt\sfp\times \wt\sfq$,
again by \lemref{l:fin theta}. 

\section{Interaction with the genuine pseudo-identity functor}  \label{s:pseudo}

\ssec{The pseudo-identity functor}  \label{ss:pseudo}

\sssec{}

We now recall that in \cite[Sect. 4.4.8]{DrGa2} another functor, denoted
$$\psId_{\Bun_G,!}:\Dmod(\Bun_G)_{\on{co}}\to \Dmod(\Bun_G)$$
was introduced.

\medskip

Namely, in terms of the equivalences
\begin{multline}  \label{e:functors}
\on{Funct}_{\on{cont}}(\Dmod(\Bun_G)_{\on{co}},\Dmod(\Bun_G))\simeq
(\Dmod(\Bun_G)_{\on{co}})^\vee\otimes \Dmod(\Bun_G)\simeq  \\
\simeq \Dmod(\Bun_G)\otimes \Dmod(\Bun_G)\simeq \Dmod(\Bun_G\times \Bun_G),
\end{multline}
the functor
$\psId_{\Bun_G,!}$ corresponds to the object
$$(\Delta_{\Bun_G})_!(k_{\Bun_G})\in \Dmod(\Bun_G\times \Bun_G).$$

\sssec{}

Note the following feature of the functor $\psId_{\Bun_G,!}$, parallel to one for $\psId_{\Bun_G,\on{naive}}$,
given by \lemref{l:naive self-dual}. 

\begin{lem}
Under the identification $\Dmod(\Bun_G)^\vee\simeq \Dmod(\Bun_G)_{\on{co}}$, we have
$$(\psId_{\Bun_G,!})^\vee \simeq \psId_{\Bun_G,!}.$$
\end{lem}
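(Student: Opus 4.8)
The plan is to mimic the proof of \lemref{l:naive self-dual} essentially verbatim, substituting the kernel $(\Delta_{\Bun_G})_!(k_{\Bun_G})$ for $(\Delta_{\Bun_G})_*(\omega_{\Bun_G})$. First I would recall the standard description of dual functors in terms of kernels: for a continuous functor $\sF:\bC_1\to \bC_2$ between dualizable DG categories, classified by an object of $\bC_1^\vee\otimes \bC_2$ under $\on{Funct}_{\on{cont}}(\bC_1,\bC_2)\simeq \bC_1^\vee\otimes \bC_2$, the dual functor $\sF^\vee:\bC_2^\vee\to \bC_1^\vee$ is classified, under $\on{Funct}_{\on{cont}}(\bC_2^\vee,\bC_1^\vee)\simeq (\bC_2^\vee)^\vee\otimes \bC_1^\vee$, by the image of the same object under the symmetry constraint $\bC_1^\vee\otimes \bC_2\simeq \bC_2\otimes \bC_1^\vee$ (see \cite[Sect. 1]{DrGa1}).

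Next I would specialize to $\bC_1=\Dmod(\Bun_G)_{\on{co}}$ and $\bC_2=\Dmod(\Bun_G)$, using the duality $\bD_{\Bun_G}$ together with \eqref{e:dual of co prel} to produce a mutually compatible system of identifications $\bC_1^\vee\simeq \Dmod(\Bun_G)$, $\bC_2^\vee\simeq \Dmod(\Bun_G)_{\on{co}}$, $(\bC_2^\vee)^\vee\simeq \Dmod(\Bun_G)$. Under \eqref{e:functors}, both $\psId_{\Bun_G,!}$ and $(\psId_{\Bun_G,!})^\vee$ then become continuous functors $\Dmod(\Bun_G)_{\on{co}}\to \Dmod(\Bun_G)$ classified by objects of $\Dmod(\Bun_G\times \Bun_G)$, and the symmetry constraint above translates into pullback along the flip automorphism $\on{swap}$ of $\Bun_G\times \Bun_G$. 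So $(\psId_{\Bun_G,!})^\vee$ is classified by $\on{swap}^*\bigl((\Delta_{\Bun_G})_!(k_{\Bun_G})\bigr)$. Finally I would observe that $\on{swap}\circ \Delta_{\Bun_G}\simeq \Delta_{\Bun_G}$ and that $\on{swap}$ is an involutive isomorphism of relative dimension zero, so base change gives $\on{swap}^*\bigl((\Delta_{\Bun_G})_!(k_{\Bun_G})\bigr)\simeq (\Delta_{\Bun_G})_!(k_{\Bun_G})$ with no cohomological shift; i.e., the kernel of $\psId_{\Bun_G,!}$ is flip-equivariant, exactly as $(\Delta_{\Bun_G})_*(\omega_{\Bun_G})$ is in \lemref{l:naive self-dual}. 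This yields $(\psId_{\Bun_G,!})^\vee\simeq \psId_{\Bun_G,!}$.

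The one delicate point is the middle step: one must pin down the self-duality identifications so that ``passing to the dual functor'' genuinely corresponds to interchanging the two copies of $\Dmod(\Bun_G)$ in \eqref{e:functors}, i.e., one needs the compatibility of $\bD_{\Bun_G}$ with the canonical involutivity $((-)^\vee)^\vee\simeq \on{Id}$. This is precisely the bookkeeping already carried out for $\psId_{\Bun_G,\on{naive}}$ in \cite[Sect. 4.4]{DrGa2}, and the apparent asymmetry between the source and target copies of $\Dmod(\Bun_G)$ in \eqref{e:functors} is harmless exactly because the classifying object is itself flip-symmetric, which is the content of the lemma.
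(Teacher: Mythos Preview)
Your proposal is correct and follows exactly the same approach as the paper: the paper's proof is the single sentence ``This is just the fact that the object $(\Delta_{\Bun_G})_!(k_{\Bun_G})\in \Dmod(\Bun_G\times \Bun_G)$ is equivariant with respect to the flip,'' and you have simply unpacked that assertion in detail. The additional bookkeeping you supply (the kernel description of dual functors and the compatibility of $\bD_{\Bun_G}$ with double-dualization) is implicit in the paper's one-line argument and is handled correctly.
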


\begin{proof}
This is just the fact that the object $(\Delta_{\Bun_G})_!(k_{\Bun_G})\in \Dmod(\Bun_G\times \Bun_G)$
is equivariant with respect to the flip.
\end{proof}

\sssec{}

The goal of this section and the next is to prove:

\begin{thm} \label{t:duality}
The functor $\psId_{\Bun_G,!}$ is an equivalence.
\end{thm}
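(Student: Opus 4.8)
The plan is to reduce the assertion to the combination of the ``strange" functional equation (\thmref{t:strange preview}) and the cuspidal case, using a devissage along the Eisenstein filtration of $\Dmod(\Bun_G)_{\on{co}}$. First I would set up the semi-orthogonal picture: by \corref{c:cusp and dual}, the pair $(\Dmod(\Bun_G)_{\on{co,Eis}}, \Dmod(\Bun_G)_{\on{co,cusp}})$ is dual to $(\Dmod(\Bun_G)_{\Eis},\Dmod(\Bun_G)_{\on{cusp}})$, and $\psId_{\Bun_G,!}$ is self-dual, so it suffices to prove that $\psId_{\Bun_G,!}$ (i) carries $\Dmod(\Bun_G)_{\on{co,cusp}}$ to $\Dmod(\Bun_G)_{\on{cusp}}$ equivalently, and (ii) carries $\Dmod(\Bun_G)_{\on{co,Eis}}$ to $\Dmod(\Bun_G)_{\Eis}$ equivalently; a five-lemma-type argument then gives the full statement. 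Point (i) is \thmref{t:! and * on cusp intro} combined with \thmref{t:cusp naive equiv}: on cuspidal objects $\psId_{\Bun_G,!}$ agrees up to shift with $\psId_{\Bun_G,\on{naive}}$, which is an equivalence there.

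For point (ii) the idea is to proceed by induction on the semisimple rank of $G$, using that $\Dmod(\Bun_G)_{\on{co,Eis}}$ is generated by the images of $\Eis_{\on{co},*}$ over all proper parabolics. By \corref{c:Eis op} we have $\Eis_{\on{co},*}\simeq \Eis^-_{\on{co},?}$, so these images are expressible through the functors from the $\Bun_M$'s. Now apply the ``strange" functional equation: \thmref{t:strange preview} (proved in \secref{s:strange}) gives
\[
\Eis^-_!\circ \psId_{\Bun_M,!}\simeq \psId_{\Bun_G,!}\circ (\on{CT}_*)^\vee = \psId_{\Bun_G,!}\circ \Eis_{\on{co},*}.
\]
Thus on the essential image of $\Eis_{\on{co},*}$, the functor $\psId_{\Bun_G,!}$ is computed by $\Eis^-_!\circ \psId_{\Bun_M,!}$. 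By the inductive hypothesis $\psId_{\Bun_M,!}$ is an equivalence (here one also needs the analogous statement for the Levi, which has strictly smaller semisimple rank; the torus case is the Fourier--Mukai equivalence and is classical). Hence $\psId_{\Bun_G,!}$ restricted to $\Dmod(\Bun_G)_{\on{co,Eis}}$ is, modulo the generation statement, built out of $\Eis^-_!$, and one must check it induces an equivalence onto $\Dmod(\Bun_G)_{\Eis}$.

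The main obstacle I expect is precisely the bookkeeping in this last step: $\Dmod(\Bun_G)_{\on{co,Eis}}$ is not literally a direct sum of images of the $\Eis_{\on{co},*}$ for maximal parabolics --- there are overlaps coming from smaller parabolics and the Weyl group, so one cannot naively glue the equivalences from each $M$. The correct way around this is to use the adjoint description: show that $\psId_{\Bun_G,!}$ intertwines the projection/localization functors $\on{CT}_{\on{co},?}$ (resp.\ $\on{CT}_{\on{co},*}$) on the co-side with the corresponding $\Eis^-_!$-type functors on the other side, compatibly across all parabolics, so that the two semi-orthogonal decompositions match term by term. This compatibility is again a formal consequence of the strange functional equation together with \propref{p:diff} (which controls the difference $\psId_{\Bun_G,!}$ versus $\psId_{\Bun_G,\on{naive}}$ in Eisenstein terms), but assembling the parabolic induction coherently --- keeping track of the $\pi_1(M)$-gradings and the finiteness statements from \propref{p:via BunP tilde} --- is where the real work lies. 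Once the two filtrations are identified, fully-faithfulness and essential surjectivity of $\psId_{\Bun_G,!}$ follow by induction on the length of the filtration, completing the proof.
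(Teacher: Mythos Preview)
Your overall architecture --- induction on semisimple rank, strange functional equation for the Eisenstein piece, the cuspidal results for the cuspidal piece --- matches the paper. But there is a genuine gap in how you propose to glue, and you have misidentified the hard step in the Eisenstein part.

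\medskip

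\textbf{The five-lemma is not enough.} A semi-orthogonal decomposition $\langle \Dmod(\Bun_G)_{\on{cusp}},\Dmod(\Bun_G)_{\Eis}\rangle$ has $\Hom(\Eis,\on{cusp})=0$ by definition, but $\Hom(\on{cusp},\Eis)$ is nonzero in general. Knowing that $\psId_{\Bun_G,!}$ is an equivalence on each piece separately does \emph{not} control these cross-Homs, so your five-lemma sketch does not close. The paper does not attempt this; instead it proves essential surjectivity (the images of the two pieces generate) and then checks full faithfulness directly on $\Hom(\CF',\CF)$ for arbitrary $\CF$, splitting on the source: $\CF'\in\Dmod(\Bun_G)_{\on{co,cusp}}$ is handled by \propref{p:Hom out of cusp}, whose proof is where \propref{p:diff} actually enters. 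You invoke \propref{p:diff} only inside the Eisenstein bookkeeping; in the paper it is the engine behind the cuspidal-source cross-term, and you have no substitute for that step.

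\medskip

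\textbf{The Eisenstein step is not about overlapping parabolics.} For $\CF'=\Eis_{\on{co},*}(\CF_M)$ the paper passes, via the $(\Eis_{\on{co},*},\on{CT}_{\on{co},?})$ and $(\Eis^-_!,\on{CT}^-_*)$ adjunctions, to a comparison over $\Bun_M$ and then invokes the inductive hypothesis. No stratification by parabolics, no Weyl-group combinatorics. The actual obstacle is that this adjunction argument produces a specific natural transformation
\[
\psId_{\Bun_M,!}\circ \on{CT}_{\on{co},?}\longrightarrow \on{CT}^-_*\circ \psId_{\Bun_G,!},
\]
and one must show it coincides with the isomorphism obtained by dualizing the strange functional equation (this is \propref{p:two maps}). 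That coincidence is \emph{not} formal: it compares instances of \thmref{t:strange} for $P$ and for $P^-$, and its proof requires a separate geometric input (\lemref{l:two CT}) about the unit of the $(\Eis_!,\on{CT}^-_!)$ adjunction realized via the open embedding $\Bun_M\hookrightarrow \Bun_{P^-}\times_{\Bun_G}\Bun_P$. Your proposal does not isolate this point, and without it the diagram chase for full faithfulness on Eisenstein sources does not commute.
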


The proof will rely on a certain geometric result, namely, \propref{p:diff} which will be proved in \cite{Sch}.

\ssec{Relation between the two functors}

\sssec{}

Consider again the map 
$$\Delta_{\Bun_G}:\Bun_G\to \Bun_G\times \Bun_G.$$

It naturally factors as
$$\Bun_G\overset{\on{id}_{\Bun_G}^Z}\to \Bun_G\times B(Z_G)\overset{\Delta^Z_{\Bun_G}}\longrightarrow \Bun_G\times \Bun_G,$$
where:

\begin{itemize}

\item$Z_G$ denotes the center of $G$, and $B(Z_G)$ is its classifying stack; 

\medskip

\item The map $\on{id}^Z_{\Bun_G}$ is given by the identity map $\Bun_G\to \Bun_G$,
and $$\Bun_G\to \on{pt}\overset{\on{triv}}\to B(Z_G),$$ where $\on{triv}:\on{pt}\to B(Z_G)$ corresponds to the trivial $Z_G$-bundle;

\medskip

\item The composition $\on{pr}_1\circ \Delta^Z_{\Bun_G}$ is projection on the first
factor $\Bun_G\times B(Z_G)\to \Bun_G$;

\medskip

\item The composition $\on{pr}_2\circ \Delta^Z_{\Bun_G}$ is given by the natural action of
$B(Z_G)$ on $\Bun_G$.

\end{itemize}

\begin{rem}
Note that if $G$ is a torus, the map $\Delta^Z_{\Bun_G}$ is an isomorphism. 
\end{rem}

\sssec{}

We write
$$(\Delta_{\Bun_G})_*(\omega_{\Bun_G}) \simeq (\Delta^Z_{\Bun_G})_*\circ (\on{id}^Z_{\Bun_G})_*(\omega_{\Bun_G}).$$

In addition, 
$$(\Delta_{\Bun_G})_!(k_{\Bun_G}) \simeq (\Delta^Z_{\Bun_G})_!\circ (\on{id}^Z_{\Bun_G})_!(k_{\Bun_G})\simeq
(\Delta^Z_{\Bun_G})_!\circ (\on{id}^Z_{\Bun_G})_!(\omega_{\Bun_G})[-2\dim(\Bun_G)],$$
the latter isomorphism is due to the fact that $\Bun_G$ is smooth.

\medskip

It is easy to see that
$$\on{triv}_!(k)\simeq \on{triv}_*(k)[-\dim(Z_G)].$$

Hence, 
$$
(\Delta_{\Bun_G})_!(k_{\Bun_G}) 
\simeq (\Delta^Z_{\Bun_G})_!\circ (\on{id}^Z_{\Bun_G})_*(\omega_{\Bun_G})[-2\dim(\Bun_G)-\dim(Z_G)].$$

\medskip

Now, the morphism 
$$\Delta^Z_{\Bun_G}: \Bun_G\times B(Z_G)\to \Bun_G\times \Bun_G$$
is schematic and separated. Hence, we obtain a natural transformation 
\begin{equation} \label{e:! to * diag}
(\Delta^Z_{\Bun_G})_! \to (\Delta^Z_{\Bun_G})_*.
\end{equation}

Summarizing, we obtain a map
\begin{equation} \label{e:map between kernels}
(\Delta_{\Bun_G})_!(k_{\Bun_G})\to (\Delta_{\Bun_G})_*(\omega_{\Bun_G})[-2\dim(\Bun_G)-\dim(Z_G)].
\end{equation} 

\sssec{}

From \eqref{e:map between kernels} and \secref{sss:naive by kernel}, we obtain a natural transformation:

\begin{equation} \label{e:diff}
\psId_{\Bun_G,!}\to \psId_{\Bun_G,\on{naive}}[-2\dim(\Bun_G)-\dim(Z_G)]
\end{equation} 
as functors $\Dmod(\Bun_G)_{\on{co}} \to \Dmod(\Bun_G)$. 

\medskip

Let $$\psId_{\Bun_G,\on{diff}}: \Dmod(\Bun_G)_{\on{co}} \to \Dmod(\Bun_G)$$ denote the cone of
the natural transformation \eqref{e:diff}.

\sssec{}

We claim:

\begin{prop} \label{p:diff}
The functor $\psId_{\Bun_G,\on{diff}}$ admits a decreasing filtration, indexed by a poset, with subquotients 
being functors of the form 
\begin{multline*}
\Dmod(\Bun_G)_{\on{co}} \overset{\on{CT}^\mu_{\on{co},*}}\longrightarrow 
\Dmod(\Bun^\mu_M)_{\on{co}}\overset{\psId_{\Bun^\mu_M,\on{naive}}}\longrightarrow \Dmod(\Bun^\mu_M) 
\overset{\sF^{\mu,\mu'}} \longrightarrow  \\
\to \Dmod(\Bun^{\mu'}_M) \overset{\Eis^{\mu',-}_*}\longrightarrow \Dmod(\Bun_G),
\end{multline*}
for a \emph{proper} parabolic $P$ with Levi quotient $M$, where $\mu,\mu'\in \pi_1(M)$ and 
$\sF^{\mu,\mu'}$ is \emph{some} functor $\Dmod(\Bun^\mu_M)\to \Dmod(\Bun^{\mu'}_M)$.
Furthermore, for a pair 
$$(U_1\overset{j_1}\hookrightarrow \Bun_G),\,\, (U_2\overset{j_2}\hookrightarrow \Bun_G)\in  \on{op-qc}(G),$$ 
the induced filtration on 
$$j_1^*\circ \psId_{\Bun_G,\on{diff}} \circ (j_2)_{\on{co},*}$$
is finite. 
\end{prop}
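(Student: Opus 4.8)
The plan is to compute the cone of the natural transformation \eqref{e:diff} by realizing it via the factorization $\Delta_{\Bun_G} = \Delta^Z_{\Bun_G} \circ \on{id}^Z_{\Bun_G}$ through $\Bun_G \times B(Z_G)$, and then analyzing the cone of \eqref{e:! to * diag}, i.e., of $(\Delta^Z_{\Bun_G})_! \to (\Delta^Z_{\Bun_G})_*$. The key observation is that the cone of the natural transformation $f_! \to f_*$ for a schematic separated (but non-proper) morphism $f$ is controlled by the "boundary" of a relative compactification. For the map $\Delta^Z_{\Bun_G}: \Bun_G \times B(Z_G) \to \Bun_G \times \Bun_G$, such a relative compactification should be provided by the diagonal $\Bun_G \to \overline{\Bun}$-type constructions: more precisely, after the first projection is split off, we are looking at the map $\Bun_G \to \Bun_G$ twisted by the $B(Z_G)$-action, i.e., essentially the map classifying a pair of $G$-bundles together with an isomorphism of their induced $G/[G,G]$-bundles (or an isomorphism over $B(Z_G)$). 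This map is ind-schematic after passing to Hecke-type modifications, and its "defect" is measured by non-trivial modifications along the curve — which is exactly what produces Eisenstein/constant-term terms for proper parabolics.

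The main steps, in order, are as follows. First, I would reduce to a statement about kernels in $\Dmod(\Bun_G \times \Bun_G)$: the cone of \eqref{e:map between kernels} is (a shift of) an object $\CK_{\on{diff}} \in \Dmod(\Bun_G \times \Bun_G)$, and I would identify $\psId_{\Bun_G,\on{diff}}$ with the functor associated to $\CK_{\on{diff}}$ under \eqref{e:functors}. Second, I would introduce the relevant geometric object: a relative compactification $\overline{Z}$ of the map $\Delta^Z_{\Bun_G}$, whose boundary strata are indexed by $\theta \in \Lambda_{G,P}^{\on{pos}}$ for proper standard parabolics $P$ (this is the Drinfeld-style compactification of the "space of isomorphisms up to the center"), so that the cone of $(\Delta^Z_{\Bun_G})_! \to (\Delta^Z_{\Bun_G})_*$ acquires a filtration indexed by these boundary strata. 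Third, on each stratum I would identify the contribution: a stratum for $(P, \theta)$ contributes a kernel supported on the image of a correspondence $\Bun_G \leftarrow \Bun_P^\mu \to \Bun_M^\mu \to \Bun_M^{\mu'} \to \Bun_{P^-}^{\mu'} \to \Bun_G$ (with the middle map a Hecke modification of type $\theta$ along the curve), and the associated functor is precisely of the form $\Eis^{\mu',-}_* \circ \sF^{\mu,\mu'} \circ \psId_{\Bun^\mu_M,\on{naive}} \circ \on{CT}^\mu_{\on{co},*}$, where $\sF^{\mu,\mu'}$ absorbs the Hecke modification datum $\CK^\theta$ (compare the computation in \secref{ss:proof of tilde}, which already produces such $\CK^\theta$'s from the strata of $\wt\Bun_P$). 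Fourth, I would establish the local finiteness statement: for fixed $U_1, U_2 \in \on{op-qc}(G)$, only finitely many boundary strata $(P,\theta)$ meet the relevant preimage, which follows from the quasi-compactness argument as in the Lemma after \eqref{e:decomp tilde} — a modification of type $\theta$ with $|\theta|$ large moves a bundle out of any fixed quasi-compact substack.

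I expect the main obstacle to be the construction and stratification of the relative compactification $\overline{Z}$ of $\Delta^Z_{\Bun_G}$, together with the precise identification of the stratum contributions — this is genuinely the geometric heart of the matter, which is why (as the paper states) the proof of \propref{p:diff} is deferred to \cite{Sch}. The subtle points are: (i) defining $\overline{Z}$ so that its boundary is cleanly indexed by $(P,\theta)$ with the Levi $M$ appearing through $\Bun_M$ twisted by Hecke modifications; (ii) checking that the "base change" computation on each stratum really collapses to the composition $\Eis^{-}_* \circ \sF \circ \psId_{\on{naive}} \circ \on{CT}_{\on{co},*}$ rather than to some more complicated functor — here the self-duality $(\psId_{\Bun_G,!})^\vee \simeq \psId_{\Bun_G,!}$ and the appearance of the \emph{opposite} parabolic $P^-$ on the Eisenstein side are forced by the flip-equivariance of the kernel, matching the flip-symmetry already visible in $(\Delta_{\Bun_G})_!(k_{\Bun_G})$; and (iii) verifying that the filtration is by a poset compatible with the partial order on $\Lambda_{G,P}^{\on{pos}}$ and that it is \emph{decreasing} (reflecting that $\theta = 0$ gives the leading term, which is $\psId_{\Bun_G,\on{naive}}$ up to shift, and the cone is built from the $\theta \neq 0$ pieces). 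Granting these geometric inputs, the categorical bookkeeping — transporting the stratification of the kernel through the equivalence \eqref{e:functors}, and reading off the local finiteness from the quasi-compactness lemma — is routine.
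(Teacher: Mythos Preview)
Your outline is broadly aligned with what the paper says, but note that the paper itself does \emph{not} prove \propref{p:diff}: it explicitly defers the argument to \cite{Sch}, giving only a one-line hint about the geometric ingredient. So there is no detailed proof in the paper to compare against, and your proposal is in effect a sketch of what \cite{Sch} should contain.

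That said, there is one concrete piece of information in the paper's hint that you have not identified, and it is the crux of the matter. You propose to construct the relative compactification $\overline{Z}$ of $\Delta^Z_{\Bun_G}$ as a ``Drinfeld-style compactification of the space of isomorphisms up to the center,'' and you flag its construction as the main obstacle. The paper is specific here: the compactification is built using \emph{Vinberg's canonical semi-group} attached to $G$ (see \cite{Vi}). This is the object whose strata are naturally indexed by standard parabolics $P$, and whose global version over $\Bun_G\times\Bun_G$ supplies exactly the stratification you are looking for. Your description in terms of ``pairs of bundles with an isomorphism of the induced $G/[G,G]$-bundles'' is not quite the right moduli problem and would not by itself produce strata indexed by all proper parabolics; the Vinberg degeneration is what does this. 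Once that input is in place, the rest of your plan --- reading off the stratum contributions as $\Eis^{-}_*\circ \sF\circ \psId_{\on{naive}}\circ \on{CT}_{\on{co},*}$ by analogy with \secref{ss:proof of tilde}, and the local-finiteness argument over quasi-compact opens --- matches the paper's stated analogy with \propref{p:via BunP tilde}.
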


The proof of \propref{p:diff} is analogous to that of \propref{p:via BunP tilde} and will be given in \cite{Sch}. 

\medskip

As its
geometric ingredient, instead of the stack $\wt\Bun_P$ appearing in the proof of \propref{p:via BunP tilde}, one
uses a compactification of the morphism $\Delta_{\Bun_G}^{Z_G}$ which can be constructed using Vinberg's
canonical semi-group of \cite{Vi} attached to $G$. 

\ssec{Pseudo-identity and cuspidality}

\sssec{}

As a consequence of \propref{p:diff}, we obtain:

\begin{cor}  \label{c:! and * on cusp}
The morphism \eqref{e:diff} induces an isomorphism
$$\psId_{\Bun_G,!}|_{\Dmod(\Bun_G)_{\on{co,cusp}}}\simeq \psId_{\Bun_G,\on{naive}}|_{\Dmod(\Bun_G)_{\on{co,cusp}}}[-2\dim(\Bun_G)-\dim(Z_G)].$$
\end{cor}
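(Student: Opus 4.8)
The plan is to deduce \corref{c:! and * on cusp} directly from \propref{p:diff} by showing that the functor $\psId_{\Bun_G,\on{diff}}$, which is the cone of the natural transformation \eqref{e:diff}, vanishes on the subcategory $\Dmod(\Bun_G)_{\on{co,cusp}}$. If this holds, then \eqref{e:diff} becomes an isomorphism after restriction, which is precisely the assertion.

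First I would recall that, by \propref{p:diff}, the functor $\psId_{\Bun_G,\on{diff}}$ admits a decreasing filtration, indexed by a poset, whose subquotients are functors that factor through $\on{CT}^\mu_{\on{co},*}$ for a \emph{proper} parabolic $P$ with Levi $M$; schematically, each subquotient is of the form $\Eis^{\mu',-}_*\circ \sF^{\mu,\mu'}\circ \psId_{\Bun^\mu_M,\on{naive}}\circ \on{CT}^\mu_{\on{co},*}$. Next, I would observe that by the very definition of $\Dmod(\Bun_G)_{\on{co,cusp}}$ as the intersection of the kernels of the functors $\on{CT}_{\on{co},?}$ over all proper parabolics, together with the remark following \corref{c:cusp and dual} (and \propref{p:two cusp}, \propref{p:cusp co via CT}), every object of $\Dmod(\Bun_G)_{\on{co,cusp}}$ also lies in the kernel of $\on{CT}_{\on{co},*}$ for all proper parabolics; hence $\on{CT}^\mu_{\on{co},*}(\CF)=0$ for all proper $P$ and all $\mu\in\pi_1(M)$, when $\CF\in\Dmod(\Bun_G)_{\on{co,cusp}}$. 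Consequently each subquotient in the filtration of $\propref{p:diff}$ kills $\CF$.

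To conclude that $\psId_{\Bun_G,\on{diff}}(\CF)=0$ itself (and not merely its associated graded), I would invoke the finiteness clause of \propref{p:diff}: for $\CF = (j_2)_{\on{co},*}(\CF_{U_2})$ the induced filtration on $j_1^*\circ \psId_{\Bun_G,\on{diff}}\circ (j_2)_{\on{co},*}$ is finite, so $j_1^*\circ\psId_{\Bun_G,\on{diff}}(\CF)$ is built from finitely many vanishing subquotients, hence is zero for every $(U_1\overset{j_1}\hookrightarrow\Bun_G)\in\on{op-qc}(G)$. By the limit presentation \eqref{e:DBunG as limit}, an object of $\Dmod(\Bun_G)$ all of whose $*$-restrictions to quasi-compact opens vanish is itself zero; therefore $\psId_{\Bun_G,\on{diff}}(\CF)=0$. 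Note that here I should first reduce to the case where $\CF$ is of the form $(j_2)_{\on{co},*}(\CF_{U_2})$ — this is legitimate because $\Dmod(\Bun_G)_{\on{co,cusp}}\subset\Dmod(\Bun_G)_{\on{co}}$ and objects in the kernel of all $\on{CT}_{\on{co},*}$ that are ``supported'' on a fixed quasi-compact open (by \propref{p:!-supp cusp co}, every cuspidal object is of this form with $U=\CU_G$) satisfy exactly this hypothesis.

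The main obstacle I anticipate is the bookkeeping needed to pass from vanishing of the subquotients to vanishing of $\psId_{\Bun_G,\on{diff}}$ on a general cuspidal object: a priori a decreasing filtration indexed by an infinite poset need not have a convergent or exhaustive behaviour, so the finiteness statement in \propref{p:diff} (after applying $j_1^*\circ(-)\circ(j_2)_{\on{co},*}$) is doing essential work and must be applied with the correct choice of $U_2$ — namely $\CU_G$ from \propref{p:!-supp cusp co} — so that $\CF\simeq (\jmath_G)_{\on{co},*}\circ(\jmath_G)^*_{\on{co}}(\CF)$. Once the reduction to such $\CF$ is in place, the argument is formal. The cohomological shift $[-2\dim(\Bun_G)-\dim(Z_G)]$ is simply carried along from \eqref{e:diff}.
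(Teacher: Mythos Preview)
Your proposal is correct and follows essentially the same approach as the paper's proof: reduce to showing $j_1^*\circ\psId_{\Bun_G,\on{diff}}(\CF)=0$ for each $U_1\in\on{op-qc}(G)$, use \propref{p:!-supp cusp co} to write $\CF\simeq(\jmath_G)_{\on{co},*}\circ(\jmath_G)^*_{\on{co}}(\CF)$, then invoke the finiteness clause of \propref{p:diff} together with the vanishing of $\on{CT}^\mu_{\on{co},*}(\CF)$ (which the paper cites as \corref{c:descr cusp co}, equivalent to your combination of \propref{p:cusp co via CT} and \propref{p:two cusp}).
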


\begin{proof}

By the definition of $\Dmod(\Bun_G)$, it is sufficient to show that for any 
$(U_1\overset{j_1}\hookrightarrow \Bun_G)\in  \on{op-qc}(G)$, the map \eqref{e:diff} induces an isomorphism
\begin{multline*}
j_1^*\circ \psId_{\Bun_G,!}|_{\Dmod(\Bun_G)_{\on{co,cusp}}}\to \\
\to j_1^*\circ  \psId_{\Bun_G,\on{naive}}|_{\Dmod(\Bun_G)_{\on{co,cusp}}}[-2\dim(\Bun_G)-\dim(Z_G)].
\end{multline*}

\medskip

Let us take $U_2:=\CU_G$ as in \propref{p:!-supp cusp}. By \propref{p:!-supp cusp co}, it suffices to show that for
$\CF\in \Dmod(\Bun_G)_{\on{co,cusp}}$
$$j_1^*\circ \psId_{\Bun_G,\on{diff}}\circ (j_2)_{\on{co},*}\circ (j_2)^*_{\on{co}}(\CF)=0.$$

\medskip

However, this follows from \propref{p:diff}: 

\medskip

Indeed, the object in question has a finite filtration, with subquotients isomorphic to
$$j_1^*\circ \Eis^{\mu',-}_*\circ \sF^{\mu,\mu'}\circ \psId_{\Bun^\mu_M,\on{naive}}\circ \on{CT}^\mu_{\on{co},*}\circ (j_2)_{\on{co},*}\circ (j_2)^*_{\on{co}}(\CF),$$
which, by \propref{p:!-supp cusp co},  is isomorphic to 
$$j_1^*\circ \Eis^{\mu',-}_*\circ \sF^{\mu,\mu'}\circ \psId_{\Bun^\mu_M,\on{naive}}(\on{CT}^\mu_{\on{co},*}(\CF)),$$
while $\on{CT}^\mu_{\on{co},*}(\CF)=0$ by \corref{c:descr cusp co}.

\end{proof}

\begin{cor} \label{c:equiv on cusp}
The functor $\psId_{\Bun_G,!}$ induces an equivalence
$$\Dmod(\Bun_G)_{\on{co,cusp}}\to \Dmod(\Bun_G)_{\on{cusp}}.$$
\end{cor}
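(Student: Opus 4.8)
The plan is to deduce this statement formally by combining \corref{c:! and * on cusp} with \thmref{t:cusp naive equiv}, so in fact there is very little left to do once those two results are in place.

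First I would invoke \corref{c:! and * on cusp}, which gives a canonical isomorphism of functors
$$\psId_{\Bun_G,!}|_{\Dmod(\Bun_G)_{\on{co,cusp}}}\simeq \psId_{\Bun_G,\on{naive}}|_{\Dmod(\Bun_G)_{\on{co,cusp}}}[-2\dim(\Bun_G)-\dim(Z_G)].$$
The cohomological shift $[-2\dim(\Bun_G)-\dim(Z_G)]$ is an auto-equivalence of $\Dmod(\Bun_G)$, and it preserves the full subcategory $\Dmod(\Bun_G)_{\on{cusp}}$, since the latter is cut out by the vanishing of the triangulated functors $\on{CT}_*$, which commute with shifts. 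Hence the behavior of $\psId_{\Bun_G,!}$ on $\Dmod(\Bun_G)_{\on{co,cusp}}$ — both the question of where it lands and the question of whether it is an equivalence — is equivalent to the corresponding question for $\psId_{\Bun_G,\on{naive}}|_{\Dmod(\Bun_G)_{\on{co,cusp}}}$.

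Next I would apply \thmref{t:cusp naive equiv}: it asserts precisely that $\psId_{\Bun_G,\on{naive}}$ carries $\Dmod(\Bun_G)_{\on{co,cusp}}$ into $\Dmod(\Bun_G)_{\on{cusp}}$ and that the resulting functor $\Dmod(\Bun_G)_{\on{co,cusp}}\to \Dmod(\Bun_G)_{\on{cusp}}$ is an equivalence. Composing with the shift auto-equivalence of $\Dmod(\Bun_G)_{\on{cusp}}$, we conclude that $\psId_{\Bun_G,!}$ likewise maps $\Dmod(\Bun_G)_{\on{co,cusp}}$ into $\Dmod(\Bun_G)_{\on{cusp}}$ and that the induced functor is an equivalence, as claimed.

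There is no genuine obstacle in this corollary; all of the substance has already been absorbed into \propref{p:diff} (hence into \corref{c:! and * on cusp}) and into \thmref{t:cusp naive equiv}. The only point worth flagging is that one uses the isomorphism of \corref{c:! and * on cusp} as an isomorphism of \emph{functors}, uniformly in the argument, rather than merely pointwise on objects; this is what upgrades "bijective on isomorphism classes" to "equivalence of categories," and it is exactly the form in which \corref{c:! and * on cusp} was stated.
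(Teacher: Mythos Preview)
Your proof is correct and follows exactly the same approach as the paper, which simply says ``Follows from \thmref{t:cusp naive equiv} and \corref{c:! and * on cusp}.'' You have unpacked the one-line argument by making explicit that the cohomological shift is an auto-equivalence preserving $\Dmod(\Bun_G)_{\on{cusp}}$, which is the only detail needed to combine the two cited results.
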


\begin{proof}

Follows from \thmref{t:cusp naive equiv} and \corref{c:! and * on cusp}.

\end{proof}

\sssec{}

The next assertion is a crucial step in the proof of \thmref{t:duality}:

\begin{prop}  \label{p:Hom out of cusp}
The functor $\psId_{\Bun_G,!}$ induces an isomorphism
$$\Hom_{\Dmod(\Bun_G)_{\on{co}}}(\CF',\CF)\to
\Hom_{\Dmod(\Bun_G)}(\psId_{\Bun_G,!}(\CF'),\psId_{\Bun_G,!}(\CF)),$$
provided that $\CF'\in \Dmod(\Bun_G)_{\on{co,cusp}}$. 
\end{prop}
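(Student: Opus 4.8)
The plan is to exploit the self-duality of $\psId_{\Bun_G,!}$ together with \corref{c:equiv on cusp} and the description of the cuspidal categories as mutually dual summands. Concretely, write $\CF'=\be_{\on{co}}(\CF'_{\on{cusp}})$ with $\CF'_{\on{cusp}}\in \Dmod(\Bun_G)_{\on{co,cusp}}$; by \corref{c:cusp and dual}(2), the functor $\psId_{\Bun_G,!}$, being self-dual by \lemref{l:naive self-dual}'s analogue, is compatible with the duality pairings, so the map on $\Hom$'s that we must analyze can be rewritten using the pairing
$$\langle-,-\rangle_{\Bun_G}\colon \Dmod(\Bun_G)\times \Dmod(\Bun_G)_{\on{co}}\to \Vect.$$
The first step is therefore to translate $\Hom_{\Dmod(\Bun_G)_{\on{co}}}(\CF',\CF)$ and the target $\Hom_{\Dmod(\Bun_G)}(\psId_{\Bun_G,!}(\CF'),\psId_{\Bun_G,!}(\CF))$ into statements about $\psId_{\Bun_G,!}$ applied to $\CF$ and paired against $\CF'$, using the adjunction data between $\psId_{\Bun_G,!}$ and its dual.

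Next I would reduce to the naive functor on the cuspidal side. Since $\CF'\in \Dmod(\Bun_G)_{\on{co,cusp}}$, by \propref{p:!-supp cusp co} we may replace $\CF'$ by $(\jmath_G)_{\on{co},*}(\jmath_G)^*_{\on{co}}(\CF')$ for the fixed $(\CU_G\overset{\jmath_G}\hookrightarrow \Bun_G)$ of \propref{p:!-supp cusp}, and by \corref{c:! and * on cusp} the two pseudo-identity functors agree on $\CF'$ up to a cohomological shift. Thus $\psId_{\Bun_G,!}(\CF')\simeq (\jmath_G)_*(\jmath_G)^*(\CF')$ up to shift, and this object lies in $\Dmod(\Bun_G)_{\on{cusp}}$. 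The key point is then that mapping \emph{out of} a cuspidal, $\jmath_G$-supported object is insensitive to the difference between the categories: the cone of \eqref{e:diff} on $\CF$, namely $\psId_{\Bun_G,\on{diff}}(\CF)$, contributes nothing to $\Hom(\psId_{\Bun_G,!}(\CF'),-)$ because—by \propref{p:diff}—each subquotient of $\psId_{\Bun_G,\on{diff}}(\CF)$ is of the form $\Eis^{\mu',-}_*(\cdots)$, and cuspidal objects are right-orthogonal (after the shift identification, also left-orthogonal, via \propref{p:via BunP tilde}(a) which rewrites $\Eis_*$-images as filtered by $\Eis_!$-images) to the images of proper-parabolic Eisenstein functors. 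So $\Hom(\psId_{\Bun_G,!}(\CF'),\psId_{\Bun_G,!}(\CF))\simeq \Hom(\psId_{\Bun_G,!}(\CF'),\psId_{\Bun_G,\on{naive}}(\CF))$ up to shift.

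Having made these reductions, I would finish by a direct computation with the naive functor and the $\jmath_G$-support: using \corref{c:naive and res}, $(\jmath_G)^*\circ \psId_{\Bun_G,\on{naive}}\simeq (\jmath_G)^*_{\on{co}}$, and since $\psId_{\Bun_G,!}(\CF')$ is (up to shift) extended by $*$ from $\CU_G$ and is compact-corepresentable there,
$$\Hom_{\Dmod(\Bun_G)}\bigl((\jmath_G)_*(\jmath_G)^*(\CF'),\psId_{\Bun_G,\on{naive}}(\CF)\bigr)\simeq
\Hom_{\Dmod(\CU_G)}\bigl((\jmath_G)^*(\CF'),(\jmath_G)^*_{\on{co}}(\CF)\bigr)$$
by the $((\jmath_G)^*,(\jmath_G)_*)$-adjunction; on the other side, the $((\jmath_G)^*_{\on{co}},(\jmath_G)_{\on{co},*})$-adjunction of \lemref{l:j ff} together with \propref{p:!-supp cusp co} gives
$$\Hom_{\Dmod(\Bun_G)_{\on{co}}}(\CF',\CF)\simeq
\Hom_{\Dmod(\Bun_G)_{\on{co}}}((\jmath_G)_{\on{co},*}(\jmath_G)^*_{\on{co}}(\CF'),\CF)\simeq
\Hom_{\Dmod(\CU_G)}((\jmath_G)^*_{\on{co}}(\CF'),(\jmath_G)^*_{\on{co}}(\CF)),$$
and $(\jmath_G)^*_{\on{co}}(\CF')\simeq (\jmath_G)^*(\CF')$ under \corref{c:naive and res}. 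Matching the two sides (and checking the cohomological shifts cancel) gives the isomorphism. The main obstacle I anticipate is the second step: verifying cleanly that the $\Eis^{\mu',-}_*$-type subquotients of $\psId_{\Bun_G,\on{diff}}(\CF)$ really are left-orthogonal to the shifted cuspidal object $\psId_{\Bun_G,!}(\CF')$—this requires combining \propref{p:diff} with \propref{p:via BunP tilde}(a) to convert $\Eis_*$ into a filtered colimit of $\Eis_!$'s, and being careful that the filtration in \propref{p:diff} is only \emph{locally} finite so that the orthogonality passes to the (possibly infinite) extension after applying $j_1^*$ for each $U_1$.
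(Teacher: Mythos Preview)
Your overall strategy---replace $\psId_{\Bun_G,!}$ by $\psId_{\Bun_G,\on{naive}}$ using that $\psId_{\Bun_G,\on{diff}}(\CF)$ is built from $\Eis^{\mu',-}_*$-images which are left-orthogonal to cuspidal objects---is the same as the paper's. But there are two genuine gaps.

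\medskip

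\textbf{The adjunction in your final step goes the wrong way.} In \lemref{l:j ff} the adjoint pair is $(j^*_{\on{co}},j_{\on{co},*})$ with $j^*_{\on{co}}$ the \emph{left} adjoint, so $\Hom_{\on{co}}\bigl((\jmath_G)_{\on{co},*}(A),\CF\bigr)$ does \emph{not} identify with $\Hom_{\CU_G}\bigl(A,(\jmath_G)^*_{\on{co}}(\CF)\bigr)$; that would require a right adjoint to $(\jmath_G)_{\on{co},*}$. (The analogous move on the $\Dmod(\Bun_G)$-side is salvageable because a cuspidal object is also $!$-extended from $\CU_G$ by \propref{p:!-supp cusp}, but on the co-side there is no such fix without further input.) The paper avoids this by running the adjunction in the second variable instead: it first assumes $\CF=j_{\on{co},*}(\CF_U)$, and then both $\Hom_{\on{co}}(\CF',j_{\on{co},*}(\CF_U))$ and $\Hom\bigl(\psId_{\on{naive}}(\CF'),j_*(\CF_U)\bigr)$ reduce to $\Hom_{\Dmod(U)}\bigl(j^*_{\on{co}}(\CF'),\CF_U\bigr)$ by the correct $(j^*_{\on{co}},j_{\on{co},*})$ and $(j^*,j_*)$ adjunctions together with \corref{c:naive and res}.

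\medskip

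\textbf{The reduction to $\CF$ of the form $j_{\on{co},*}(\CF_U)$ is missing, and cannot be bypassed.} You note correctly that the filtration of \propref{p:diff} is only finite after restricting to $j_1^*\circ\psId_{\on{diff}}\circ (j_2)_{\on{co},*}$; but for arbitrary $\CF$ there is no $j_2$, so even after applying $\jmath_G^*$ the filtration on $\psId_{\on{diff}}(\CF)$ need not be finite, and orthogonality to each subquotient does not imply orthogonality to the whole. The paper handles this by enlarging $\CU_G$ to a \emph{co-truncative} open $U$ (so that $j_{\on{co},*}$ acquires a continuous right adjoint $j^?$), and splitting an arbitrary $\CF$ via the triangle $\CF_1\to\CF\to j_{\on{co},*}\circ j^?(\CF)$ with $j^?(\CF_1)=0$. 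The second term is now of the required special form; for the first term both sides of the Hom-comparison vanish, the source because $\CF'\simeq j_{\on{co},*}(\text{---})$ so $\Hom_{\on{co}}(\CF',\CF_1)\simeq\Hom(\text{---},j^?(\CF_1))=0$, and the target because $j^?(\CF_1)=0$ forces $j^*\circ\psId_{\Bun_G,!}(\CF_1)=0$ (a fact from \cite{Ga2}), while $\psId_{\Bun_G,!}(\CF')$ is $!$-extended from $U$.

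\medskip

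A minor point: the left-orthogonality of cuspidal objects to $\Eis^{\mu',-}_*$ follows directly from \thmref{t:weird adj} (its left adjoint is $\on{CT}^{\mu',-}_!\simeq \on{CT}^{\mu'}_*$, which kills cuspidal objects); invoking \propref{p:via BunP tilde}(a) is unnecessary.
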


\ssec{Proof of \propref{p:Hom out of cusp}}

\sssec{}  \label{sss:from open}

Let us first assume that $\CF$ has the form $j_{\on{co},*}(\CF_U)$ for some
$(U\overset{j}\hookrightarrow \Bun_G) \in \on{op-qc}(G)$. 

\medskip

Consider the commutative diagram

\medskip

\begin{equation} \label{e:Hom cusp}
\CD
\Hom(\CF',\CF)  @>>>    \Hom(\psId_{\Bun_G,\on{naive}}(\CF'),\psId_{\Bun_G,\on{naive}}(\CF))  \\
@VVV   @VVV  \\
\Hom(\psId_{\Bun_G,!}(\CF'),\psId_{\Bun_G,!}(\CF)) @>>>
\Hom(\psId_{\Bun_G,!}(\CF'),\psId_{\Bun_G,\on{naive}}(\CF)[d]),
\endCD
\end{equation} 
where $d=-2\dim(\Bun_G)-\dim(Z_G)$. 

\medskip

We need to show that the left vertical arrow is an isomorphism. We will do so by showing that all
the other arrows are isomorphisms. 

\sssec{}

First, we claim that upper horizontal arrow in \eqref{e:Hom cusp}
 is an isomorphism for any $\CF'\in \Dmod(\Bun_G)_{\on{co}}$ and
$\CF=j_*(\CF_U)$. Indeed, the map in question fits into a commutative diagram
$$
\CD
\Hom_{\Dmod(\Bun_G)_{\on{co}}}(\CF',j_{\on{co},*}(\CF_U))  @>>>  
 \Hom(\psId_{\Bun_G,\on{naive}}(\CF'),\psId_{\Bun_G,\on{naive}}\circ j_{\on{co},*}(\CF_U))   \\
 @V{\sim}VV     @VV{\sim}V   \\  
 \Hom_{\Dmod(U)}(j_{\on{co}}^*(\CF'),\CF_U)   & & 
 \Hom_{\Dmod(\Bun_G)}(\psId_{\Bun_G,\on{naive}}(\CF'),j_*(\CF_U)) \\
 @V{\on{id}}VV    @VV{\sim}V  \\
\Hom_{\Dmod(U)}(j_{\on{co}}^*(\CF'),\CF_U) @>{\sim}>> \Hom_{\Dmod(U)}(j^*\circ \psId_{\Bun_G,\on{naive}}(\CF'),\CF_U). \\
\endCD
$$

\sssec{}

The right vertical arrow in \eqref{e:Hom cusp} is an isomorphism by \corref{c:! and * on cusp}. 

\medskip

To show that the lower horizontal arrow is an isomorphism, using \corref{c:equiv on cusp}, it suffices to
show that for any $\CF''\in \Dmod(\Bun_G)_{\on{cusp}}$, we have
$$\Hom_{\Dmod(\Bun_G)}(\CF'',\psId_{\Bun_G,\on{diff}}\circ j_{\on{co},*}(\CF_U))=0.$$

\medskip

By \propref{p:!-supp cusp},
\begin{multline*}
\Hom_{\Dmod(\Bun_G)}(\CF'', \psId_{\Bun_G,\on{diff}}\circ j_{\on{co},*}(\CF_U))\simeq  \\
\simeq \Hom_{\Dmod(\CU_G)}(\jmath_G^*(\CF''), \jmath_G^*\circ \psId_{\Bun_G,\on{diff}}\circ j_{\on{co},*}(\CF_U)).
\end{multline*}

Applying \propref{p:diff}, we obtain that it suffices to show that for $\CF''\in \Dmod(\Bun_G)_{\on{cusp}}$
$$\Hom_{\Dmod(\CU_G)}\left(\jmath_G^*(\CF''),\jmath_G^*\circ 
\Eis^{\mu',-}_*\circ \sF^{\mu,\mu'}\circ \psId_{\Bun^\mu_M,\on{naive}}\circ \on{CT}^\mu_{\on{co},*}\circ j_{\on{co},*}(\CF_U)\right)=0,$$
which by \propref{p:!-supp cusp} is equivalent to 
$$\Hom_{\Dmod(\Bun_G)}\left(\CF'',
\Eis^{\mu',-}_*\circ \sF^{\mu,\mu'}\circ \psId_{\Bun^\mu_M,\on{naive}}\circ \on{CT}^\mu_{\on{co},*}\circ j_{\on{co},*}(\CF_U)\right)=0.$$

Now, $\Dmod(\Bun_G)_{\on{cusp}}$ is \emph{left-orthogonal} to the essential image of $\Eis^{\mu',-}_*$
by \thmref{t:weird adj}, implying the desired vanishing. 

\sssec{}

We will now reduce the assertion of \propref{p:Hom out of cusp} to the situation of \secref{sss:from open}.

\medskip

Let us recall that according to \cite[Theorem 4.1.8]{DrGa2}, any element
$(U\overset{j}\hookrightarrow \Bun_G) \in \on{op-qc}(G)$ is contained in one
which is \emph{co-truncative}. See \cite[Sect. 3.8]{DrGa2} for what it means for an open substack
to be co-truncative. In particular, the open substack $\CU_G$ of \propref{p:!-supp cusp} can be 
enlarged so that it is co-trunactive.

\medskip

Recall also that for a co-truncative open substack $U\overset{j}\hookrightarrow \Bun_G$, the functor
$j_{\on{co},*}$ has a (continuous) right adjoint, denoted $j^?$, see \cite[Sect. 4.3]{DrGa2}.

\medskip

Any $\CF\in  \Dmod(\Bun_G)_{\on{co}}$ fits into an exact triangle
$$\CF_1\to \CF\to j_{\on{co},*}\circ j^?(\CF),$$
where $j^?(\CF_1)=0$ by \lemref{l:j ff}.

\medskip

We take $U$ to contain the substack $\CU_G$ as in \propref{p:!-supp cusp}, and assume that it is co-truncative. 
In view of \propref{p:!-supp cusp co} and \corref{c:equiv on cusp}, it remains to show that if $j^?(\CF)=0$, 
then
$$\Hom_{\Dmod(\Bun_G)}(\CF'',\psId_{\Bun_G,!}(\CF))=0, \quad \CF''\in \Dmod(\Bun_G)_{\on{cusp}}.$$

By \propref{p:!-supp cusp}, it suffices to show that 
$$j^?(\CF)=0\, \Rightarrow \, j^*\circ \psId_{\Bun_G,!}(\CF)=0.$$

However, this follows from (the nearly tautological) \cite[Corollary 6.6.3]{Ga2}.

\qed

\section{The strange functional equation and proof of the equivalence}  \label{s:strange}

In this section we will carry out the two main tasks of this paper: we will prove the strange functional
equation (\thmref{t:strange} below) and finish the proof of \thmref{t:duality} (that says that
the functor $\psId_{\Bun_G,!}$ is an equivalence). 

\ssec{The strange functional equation}

In this subsection we will study the behavior of the functor $\psId_{\Bun_G,!}$ on the subcategory 
$$\Dmod(\Bun_G)_{\on{co,Eis}}\subset \Dmod(\Bun_G)_{\on{co}}.$$

\sssec{}

First, we have the following ``strange" result:

\begin{thm} \label{t:strange}
For a parabolic $P$ and its opposite $P^-$ we have a canonical
isomorphism of functors
$$\Eis_!\circ \psId_{\Bun_M,!}\simeq \psId_{\Bun_G,!}\circ \Eis^-_{\on{co},*}.$$
\end{thm}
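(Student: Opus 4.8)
The plan is to deduce \thmref{t:strange} by a purely formal manipulation from the isomorphism $\on{CT}_*\simeq \on{CT}^-_!$ of \thmref{t:weird adj}, using the calculus of dual and conjugate functors from \secref{sss:dual and adj} and \lemref{l:conj}. The key observation is that both sides of the claimed isomorphism are built out of adjoints and duals of the four basic functors $\Eis_!$, $\on{CT}_*$, $\Eis_*$, $\on{CT}_!$, so it should suffice to chase the adjunction/duality data around.

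\textbf{Step 1: Rewrite both sides in terms of conjugate functors.} Recall from \corref{c:Eis op} that $\Eis^-_{\on{co},*}\simeq (\Eis^-_!)^{\on{op}}$ as functors $\Dmod(\Bun_M)_{\on{co}}\to \Dmod(\Bun_G)_{\on{co}}$, and by \lemref{l:conj} this equals $(\on{CT}^-_*)^\vee$. Thus the right-hand side is $\psId_{\Bun_G,!}\circ (\on{CT}^-_*)^\vee$. On the left-hand side, $\Eis_!$ is the left adjoint of $\on{CT}_*$ (by the Lemma following \sssec{sss:Eis *}), and $\psId_{\Bun_M,!}$ is self-dual by the Lemma in \secref{ss:pseudo}. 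So the statement to prove becomes an identity relating $\psId_{\Bun_G,!}$, $\psId_{\Bun_M,!}$, $\Eis_!$ and the dual of a constant-term functor.

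\textbf{Step 2: Use the kernel description of $\psId$.} By \secref{ss:pseudo}, $\psId_{\Bun_G,!}$ corresponds under the equivalence \eqref{e:functors} to $(\Delta_{\Bun_G})_!(k_{\Bun_G})\in \Dmod(\Bun_G\times\Bun_G)$; similarly for $M$. The composite $\Eis_!\circ \psId_{\Bun_M,!}$ is computed by convolving the kernel $(\Delta_{\Bun_M})_!(k_{\Bun_M})$ with the kernel of $\Eis_!$ — the latter being (pushforward of the constant sheaf along) the correspondence $\Bun_G\overset{\sfp}\leftarrow \Bun_P\overset{\sfq}\to \Bun_M$. By base change along the Cartesian square expressing $\Bun_P\underset{\Bun_M}{\times}\Bun_M\simeq \Bun_P$, this convolution is $(\sfp\times\sfq)_!(k_{\Bun_P})$, viewed as a kernel $\Dmod(\Bun_M)_{\on{co}}\to \Dmod(\Bun_G)$. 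Dually, $\psId_{\Bun_G,!}\circ \Eis^-_{\on{co},*}\simeq \psId_{\Bun_G,!}\circ (\on{CT}^-_*)^\vee$ is computed by the same kind of convolution, but now with the kernel of $(\on{CT}^-_*)^\vee$, which by \thmref{t:weird adj} (in the form $\on{CT}^-_*\simeq \on{CT}_!$, hence $(\on{CT}^-_*)^\vee\simeq \Eis_{\on{co},?}$... ) matches the correspondence $\Bun_G\overset{\sfp^-}\leftarrow \Bun_{P^-}\overset{\sfq^-}\to \Bun_M$. The point is that the identification $\on{CT}_*\simeq \on{CT}^-_!$ of \thmref{t:weird adj}, after dualizing, exactly swaps $P$ and $P^-$, so the two kernels $(\sfp\times\sfq)_!(k_{\Bun_P})$ and the $P^-$-version agree — and both sides of \thmref{t:strange} are represented by the same object of $\Dmod(\Bun_G\times\Bun_M)$.

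\textbf{Step 3: Assemble over connected components.} Since all the functors decompose as direct sums over $\mu\in\pi_1(M)$ (as in \secref{s:functors}), it is enough to establish the isomorphism componentwise, i.e. with $\Eis_!^\mu$, $\psId_{\Bun_M^\mu,!}$ and $\Eis^{\mu,-}_{\on{co},*}$; the only subtlety is matching the index $\mu$ on the two sides, which is dictated by the component structure of $\Bun_{P}$ resp. $\Bun_{P^-}$. Then one sums.

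\textbf{Main obstacle.} The hard part will be Step 2: making the convolution/base-change argument rigorous when one of the two legs ($\psId$) is given by a non-proper, "!"-type kernel $(\Delta_{\Bun_G})_!(k_{\Bun_G})$ on a non-quasi-compact stack. The functor $\sfp_!$ is only partially defined, and one must check that the relevant !-pushforwards exist on the objects in play (cf.\ the care taken in \secref{s:functors} around $\Eis_!=\sfp_!\circ\sfq^*$, which is defined via \cite[Proposition 1.1.2]{DrGa3} only on the image of $\sfq^*$). I expect the cleanest route is to avoid kernels entirely and instead argue by adjunction: show that $\Eis_!\circ \psId_{\Bun_M,!}$ is the conjugate (in the sense of \secref{sss:dual and adj}, via \lemref{l:conj}) of $\on{CT}_*\circ \psId_{\Bun_G,\on{naive}}$... — more precisely, pass to right adjoints and duals in the two known compatibilities \propref{p:naive and functors} ($\psId_{\Bun_G,\on{naive}}\circ \Eis_{\on{co},*}\simeq \Eis_*\circ \psId_{\Bun_M,\on{naive}}$) together with the self-duality of $\psId_{\Bun_G,!}$ and $\psId_{\Bun_G,\on{naive}}$, and feed in $\on{CT}_*\simeq \on{CT}^-_!$ at the step where $P$ gets replaced by $P^-$. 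This reduces everything to formal nonsense with adjoint/dual/conjugate functors plus the single geometric input \thmref{t:weird adj}, which is presumably why the author advertises the proof as "basically a formal manipulation."
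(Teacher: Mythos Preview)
Your overall strategy---identify both sides with a single kernel object in $\Dmod(\Bun_M\times\Bun_G)$---is exactly the paper's approach, and your computation of the left-hand side as $(\sfq\times\sfp)_!(k_{\Bun_P})$ is correct. But Step~2 for the right-hand side contains a real error, and your proposed ``Main obstacle'' workaround goes in the wrong direction.

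The mistake is the claim that ``the two kernels $(\sfp\times\sfq)_!(k_{\Bun_P})$ and the $P^-$-version agree.'' They do not: $(\sfq\times\sfp)_!(k_{\Bun_P})$ and $(\sfq^-\times\sfp^-)_!(k_{\Bun_{P^-}})$ are genuinely different objects of $\Dmod(\Bun_M\times\Bun_G)$, and \thmref{t:weird adj} does not identify them. The correct order of operations is: the kernel of $\psId_{\Bun_G,!}\circ\Eis^-_{\on{co},*}$ is obtained by applying $(\Eis^-_{\on{co},*})^\vee\otimes\on{Id}$ to $(\Delta_{\Bun_G})_!(k_{\Bun_G})$; now $(\Eis^-_{\on{co},*})^\vee=\on{CT}^-_*$ by definition, and \emph{this} is where \thmref{t:weird adj} enters, giving $\on{CT}^-_*\simeq\on{CT}_!$. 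On holonomic objects $\on{CT}_!\simeq\sfq_!\circ\sfp^*$, which involves $\Bun_P$, not $\Bun_{P^-}$. Base change along
\[
\CD
\Bun_P @>{\Gamma_{\sfp}}>> \Bun_P\times\Bun_G @>{\sfq\times\on{id}}>> \Bun_M\times\Bun_G \\
@V{\sfp}VV @VV{\sfp\times\on{id}}V \\
\Bun_G @>{\Delta_{\Bun_G}}>> \Bun_G\times\Bun_G
\endCD
\]
then yields $(\sfq\times\sfp)_!(k_{\Bun_P})$ directly. So the $P^-$ disappears \emph{before} you compute any kernel, not after.

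Your worry about $\sfp_!$ being only partially defined is a non-issue here: the diagonal kernels are holonomic, and $\on{CT}_!$ is well-defined on holonomic objects. The alternative route you sketch via $\psId_{\Bun_G,\on{naive}}$ and \propref{p:naive and functors} is a red herring: $\psId_{\Bun_G,!}$ and $\psId_{\Bun_G,\on{naive}}$ are not related by duality or adjunction in any way that would make that formal manipulation go through.
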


\begin{proof}

Both sides are continuous functors
$$\Dmod(\Bun_M)_{\on{co},*}\to \Dmod(\Bun_G),$$
that correspond to objects of
$$\Dmod(\Bun_M\times \Bun_G)$$
under the identification
\begin{multline*}
\on{Funct}_{\on{cont}}(\Dmod(\Bun_M)_{\on{co}},\Dmod(\Bun_G))\simeq
(\Dmod(\Bun_M)_{\on{co}})^\vee\otimes \Dmod(\Bun_G)\simeq  \\
\simeq \Dmod(\Bun_M)\otimes \Dmod(\Bun_G)\simeq \Dmod(\Bun_M\times \Bun_G),
\end{multline*}

We claim that both objects identify canonically with
$$((\sfq\times \sfp)\circ \Delta_{\Bun_P})_!(k_{\Bun_P}),$$
where the map in the formula is the same as
$$\Bun_P\overset{\sfq\times \sfp}\longrightarrow \Bun_M\times \Bun_G.$$

\medskip

The functor $\Eis_!\circ \psId_{\Bun_M,!}$ corresponds to the object, obtained by applying the functor
$$(\on{Id}_{\Dmod(\Bun_M)}\otimes \Eis_!):\Dmod(\Bun_M)\otimes \Dmod(\Bun_M)\to
\Dmod(\Bun_M)\otimes \Dmod(\Bun_G)$$
to 
$$(\Delta_{\Bun_M})_!(k_{\Bun_M})\in \Dmod(\Bun_M\times \Bun_M)\simeq \Dmod(\Bun_M)\otimes \Dmod(\Bun_M).$$

The functor $\on{Id}_{\Dmod(\Bun_M)}\otimes \Eis_!$ is left adjoint to the functor
$$\on{Id}_{\Dmod(\Bun_M)}\otimes \on{CT}_*\simeq (\on{id}_{\Bun_M}\times \sfq)_*\circ (\on{id}_{\Bun_M}\times \sfp)^!,$$
and hence is the !-Eisenstein series functor for the group $M\times G$ with respect to the parabolic $M\times P$.
I.e., it is given by
$$(\on{id}_{\Bun_M}\times \sfp)_!\times (\on{id}_{\Bun_M}\times \sfq)^*,$$
when applied to holonomic objects. 

\medskip

Base change along the diagram
$$
\CD
\Bun_P  @>{\Gamma_\sfq}>>  \Bun_M\times \Bun_P  @>{\on{id}_{\Bun_M}\times \sfp}>> \Bun_M\times \Bun_G \\
@V{\sfq}VV   @VV{\on{id}_{\Bun_M}\times \sfq}V   \\
\Bun_M  @>{\Delta_{\Bun_M}}>> \Bun_M\times \Bun_M
\endCD
$$
shows that
$$(\on{id}_{\Bun_M}\times \sfp)_!\times (\on{id}_{\Bun_M}\times \sfq)^*\circ (\Delta_{\Bun_M})_!(k_{\Bun_M})
\simeq ((\sfq\times \sfp)\circ \Delta_{\Bun_P})_!(k_{\Bun_P}),$$
as required. 
 
\medskip
 
The functor $\psId_{\Bun_G,!}\circ \Eis^-_{\on{co},*}$ corresponds to the object, obtained by applying the functor
\begin{multline*}
\left((\Eis^-_{\on{co},*})^\vee\otimes \on{Id}_{\Dmod(\Bun_G)}\right): \\
\Dmod(\Bun_G)\otimes \Dmod(\Bun_G)\to
\Dmod(\Bun_M)\otimes \Dmod(\Bun_G)
\end{multline*}
to the object 
$$(\Delta_{\Bun_G})_!(k_{\Bun_G})\in \Dmod(\Bun_G\times \Bun_G)\simeq \Dmod(\Bun_G)\otimes \Dmod(\Bun_G).$$

\medskip

We have:
$$(\Eis^-_{\on{co},*})^\vee\simeq \on{CT}^-_*,$$
and we recall that by \thmref{t:weird adj}
$$\on{CT}^-_*\simeq \on{CT}_!:=\underset{\mu}\bigoplus\, \on{CT}_!^\mu,$$
where $\on{CT}_!^\mu$ is the left adjoint of $\Eis^\mu_*$.

\medskip

Since $\on{CT}_!^\mu$ is the left adjoint of $\Eis_*^\mu$, we obtain that
$\on{CT}_!^\mu \otimes \on{Id}_{\Dmod(\Bun_G)}$ is the left adjoint of
$\Eis_*^\mu\otimes \on{Id}_{\Dmod(\Bun_G)}$, i.e., is the !-constant term functor 
for the group $G\times G$ with respect to the parabolic $P\times G$. Hence,
$$\on{CT}_!^\mu \otimes \on{Id}_{\Dmod(\Bun_G)}\simeq 
(\sfq^\mu\times \on{id}_{\Bun_G})_!\circ (\sfp^\mu\times \on{id}_{\Bun_G})^*,$$
when appied to holonomic objects
(the superscipt $\mu$ indicates that we are taking only the $\mu$-connected component
of $\Bun_P$).

\medskip

Taking the direct sum over $\mu$, we thus obtain
$$(\Eis^-_{\on{co},*})^\vee\otimes \on{Id}_{\Dmod(\Bun_G)}\simeq
(\sfq\times \on{id}_{\Bun_G})_!\circ (\sfp\times \on{id}_{\Bun_G})^*,$$
when applied to holonomic objects.

\medskip

Now, base change along the diagram 
$$
\CD
\Bun_P  @>{\Gamma_{\sfp}}>>  \Bun_P \times \Bun_G @>{\sfq\times \on{id}_{\Bun_G}}>> \Bun_M\times \Bun_G \\
@V{\sfp}VV  @VV{\sfp\times \on{id}_{\Bun_G}}V   \\
\Bun_G @>{\Delta_{\Bun_G}}>>  \Bun_G\times \Bun_G,
\endCD
$$
shows that 
$$
(\sfq\times \on{id}_{\Bun_G})_!\circ (\sfp\times \on{id}_{\Bun_G})^*\circ (\Delta_{\Bun_G})_!(k_{\Bun_G})
\simeq ((\sfq\times \sfp)\circ \Delta_{\Bun_P})_!(k_{\Bun_P}),
$$
as required. 

\end{proof}

\sssec{}

By passing to dual functors in the isomorphism
\begin{equation} \label{e:weird adj +}
\Eis_!\circ \psId_{\Bun_M,!}\simeq \psId_{\Bun_G,!}\circ \Eis^-_{\on{co},*}
\end{equation}
of \thmref{t:strange}, we obtain:

\begin{cor}  \label{c:CT and Psi}
There is a canonical isomorphism
\begin{equation} \label{e:CT and Psi 1}
\psId_{\Bun_M,!}\circ \on{CT}_{\on{co},?}\simeq \on{CT}^-_*\circ \psId_{\Bun_G,!}.
\end{equation}
\end{cor}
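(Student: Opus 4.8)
The plan is to read off \eqref{e:CT and Psi 1} by applying the contravariant operation $(-)^\vee$ to the isomorphism \eqref{e:weird adj +} of \thmref{t:strange}. All four categories involved, namely $\Dmod(\Bun_G)$, $\Dmod(\Bun_M)$, $\Dmod(\Bun_G)_{\on{co}}$ and $\Dmod(\Bun_M)_{\on{co}}$, are dualizable, with $\Dmod(\Bun_G)^\vee\simeq\Dmod(\Bun_G)_{\on{co}}$ and $\Dmod(\Bun_M)^\vee\simeq\Dmod(\Bun_M)_{\on{co}}$ (and conversely), and every functor in \eqref{e:weird adj +} is continuous; hence $(-)^\vee$ is defined on them, is contravariantly compatible with composition, and returns a functor after applying the canonical double-duality identifications.

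First I would record the duals of the four building blocks. The self-duality of the pseudo-identity functors gives $(\psId_{\Bun_M,!})^\vee\simeq\psId_{\Bun_M,!}$ and $(\psId_{\Bun_G,!})^\vee\simeq\psId_{\Bun_G,!}$. By the definition of $\on{CT}_{\on{co},?}$ as $(\Eis_!)^\vee$, dualizing $\Eis_!$ yields $\on{CT}_{\on{co},?}$. Finally, $\Eis^-_{\on{co},*}$ is by definition $(\on{CT}^-_*)^\vee$, so a second application of $(-)^\vee$ together with double-duality gives $(\Eis^-_{\on{co},*})^\vee\simeq\on{CT}^-_*$.

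Then I would dualize \eqref{e:weird adj +} term by term. The left-hand side $\Eis_!\circ\psId_{\Bun_M,!}$ becomes $(\psId_{\Bun_M,!})^\vee\circ(\Eis_!)^\vee\simeq\psId_{\Bun_M,!}\circ\on{CT}_{\on{co},?}$, and the right-hand side $\psId_{\Bun_G,!}\circ\Eis^-_{\on{co},*}$ becomes $(\Eis^-_{\on{co},*})^\vee\circ(\psId_{\Bun_G,!})^\vee\simeq\on{CT}^-_*\circ\psId_{\Bun_G,!}$. Equating the two sides produces exactly \eqref{e:CT and Psi 1}.

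I do not expect a genuine obstacle: the argument is purely formal. The only thing that needs care is the bookkeeping of which of the four categories serves as the dual of which inside each composite, so that the double-duality identifications are inserted in the correct spots; and one should note that since \thmref{t:strange} is a clean isomorphism of functors, the dualized statement carries no cohomological shift. One could instead route the computation of $(\Eis_!)^\vee$ through \corref{c:Eis op} and \lemref{l:conj}, but direct dualization is the most economical.
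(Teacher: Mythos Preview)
Your proposal is correct and is exactly the paper's approach: the paper derives \eqref{e:CT and Psi 1} in one line ``by passing to dual functors in the isomorphism \eqref{e:weird adj +}'', and your write-up simply unpacks that line using the self-duality of $\psId_{?,!}$ and the definitions $\on{CT}_{\on{co},?}=(\Eis_!)^\vee$, $\Eis^-_{\on{co},*}=(\on{CT}^-_*)^\vee$.
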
 

\sssec{}

Consider now the commutative diagram:
\begin{equation} \label{e:weird adj -}
\CD
\Dmod(\Bun_G)_{\on{co}}  @>{\psId_{\Bun_G,!}}>>  \Dmod(\Bun_G)  \\
@A{\Eis_{\on{co},*}}AA   @AA{\Eis^-_!}A  \\
\Dmod(\Bun_M)_{\on{co}}  @>{\psId_{\Bun_M,!}}>>  \Dmod(\Bun_M).
\endCD
\end{equation}

By passing to the right adjoint functors along the vertical arrows, we obtain a natural transformation
\begin{equation} \label{e:CT and Psi 2}
\psId_{\Bun_M,!}\circ \on{CT}_{\on{co},?}\to \on{CT}^-_*\circ \psId_{\Bun_G,!}.
\end{equation} 

We now claim:

\begin{prop} \label{p:two maps}
The map \eqref{e:CT and Psi 2} equals the map \eqref{e:CT and Psi 1}, and, in particular, is an isomorphism.
\end{prop}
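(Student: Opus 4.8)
The plan is to recognize that both natural transformations \eqref{e:CT and Psi 1} and \eqref{e:CT and Psi 2} are manufactured out of the single isomorphism of \thmref{t:strange} by applying the operations ``pass to the dual functor'' and ``pass to the adjoint functor,'' and then to use that these two operations are compatible with one another. Note that \eqref{e:CT and Psi 1} is, by construction, the dual of the isomorphism $\Eis_!\circ \psId_{\Bun_M,!}\simeq \psId_{\Bun_G,!}\circ \Eis^-_{\on{co},*}$ of \thmref{t:strange}, using the self-dualities $(\psId_{\Bun_G,!})^\vee\simeq \psId_{\Bun_G,!}$, $(\psId_{\Bun_M,!})^\vee\simeq \psId_{\Bun_M,!}$ together with the identifications $(\Eis_!)^\vee\simeq \on{CT}_{\on{co},?}$ and $(\Eis^-_{\on{co},*})^\vee\simeq \on{CT}^-_*$ (see \corref{c:Eis op} and \secref{sss:dual Eis}). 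On the other hand, \eqref{e:CT and Psi 2} is the mate of the commutative square \eqref{e:weird adj -}; but \eqref{e:weird adj -} is nothing other than \thmref{t:strange} applied to the opposite parabolic $P^-$ (whose opposite is $P$), and the mate is formed with respect to the adjoint pairs $(\Eis_{\on{co},*},\on{CT}_{\on{co},?})$ and $(\Eis^-_!,\on{CT}^-_*)$.

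The key step is the general categorical principle that, for a commutative square of continuous functors between dualizable DG categories two parallel edges of which admit continuous right adjoints, forming the mate commutes with passing to the dual square. This holds because the duality $2$-functor $(-)^\vee$ sends an adjunction $(\sF,\sG)$ to the adjunction $(\sG^\vee,\sF^\vee)$ (\secref{sss:dual and adj}), hence carries the unit and counit of one adjunction to the counit and unit of the other, so that the zig-zag of units and counits defining the mate of a square is taken to the zig-zag defining the mate of its dual. Granting this, the proposition is reduced to the purely formal assertion that the square \eqref{e:weird adj -} coincides --- up to the identifications of \lemref{l:conj} and \corref{c:Eis op} --- with the dual of the square obtained from \eqref{e:weird adj +} by replacing its two $\Eis$-edges by their right adjoints. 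In particular, once this is in place, the fact that \eqref{e:CT and Psi 1} is an isomorphism forces \eqref{e:CT and Psi 2} to be one as well.

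I expect the main obstacle to be exactly this last coherence statement. The isomorphism furnished by \thmref{t:strange} is produced by exhibiting \emph{both} sides as one and the same explicit kernel $((\sfq\times\sfp)\circ \Delta_{\Bun_P})_!(k_{\Bun_P})\in \Dmod(\Bun_M\times \Bun_G)$, and what must be checked is that the \emph{constructed} isomorphisms for $P$ and for $P^-$ are interchanged by duality as constructed, rather than differing by some automorphism of one of the kernels. I would carry this out at the level of kernels in $\Dmod(\Bun_M\times \Bun_G)$: passing to a dual functor transposes the kernel and applies Verdier duality, and passing to an adjoint likewise transposes and applies Verdier duality, so that both \eqref{e:CT and Psi 1} and \eqref{e:CT and Psi 2} correspond to the evident map of kernels on $\Bun_G\times \Bun_M$ induced by the map $\Bun_P\overset{\sfq\times\sfp}\longrightarrow \Bun_M\times\Bun_G$ and the fact that $k_{\Bun_P}$ is, after the relevant cohomological shifts, Verdier self-dual; this pins them down to the same transformation.

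An alternative, perhaps more robust, route to the same endpoint: since $\Dmod(\Bun_G)_{\on{co}}$ is the colimit of the categories $\Dmod(U_G)$ along the functors $(j_G)_{\on{co},*}$ over $U_G\in \on{op-qc}(G)$, a natural transformation out of it is determined by its precompositions with the $(j_G)_{\on{co},*}$. One may therefore reduce the equality $\eqref{e:CT and Psi 2}=\eqref{e:CT and Psi 1}$ to an equality after restriction to quasi-compact open substacks, where \propref{p:Eis co expl} and \propref{p:CT co expl} (and their evident analogues for $\on{CT}_{\on{co},?}$) express all the functors in sight as honest $\Dmod$-theoretic pull-push functors; the comparison then unwinds to a diagram chase with standard base-change isomorphisms along the square relating $\Bun_P$ to $\Bun_G$ and $\Bun_M$, which is routine.
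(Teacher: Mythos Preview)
Your proposal has a genuine gap, and it is precisely the one the paper flags at the outset of its proof: ``The proof of the proposition is \emph{not} a formal manipulation, as its statement involves the isomorphism of \thmref{t:strange} for the two different parabolics, namely, $P$ and $P^-$.''

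You correctly identify that \eqref{e:CT and Psi 1} is the dual of \eqref{e:weird adj +} (i.e., \thmref{t:strange} for $P$) while \eqref{e:CT and Psi 2} is the mate of \eqref{e:weird adj -} (i.e., \thmref{t:strange} for $P^-$), and you correctly isolate the coherence issue: one must check that the \emph{constructed} isomorphisms for $P$ and $P^-$ match up. But your proposed resolution --- ``passing to an adjoint likewise transposes and applies Verdier duality'' --- is where the argument breaks. The adjunction $(\Eis^-_!,\on{CT}^-_*)$ used to form the mate \eqref{e:CT and Psi 2} is \emph{not} the Verdier-formal adjunction on kernels; it is the ``weird'' adjunction of \thmref{t:weird adj}, whose unit and counit are supplied by the main construction of \cite{DrGa3}. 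Concretely (see \cite[Sect.~1.3.2]{DrGa3}), the unit $\on{Id}\to \on{CT}_!\circ \Eis^-_!$ comes from the open embedding $\bj:\Bun_M\hookrightarrow \Bun_{P^-}\underset{\Bun_G}\times \Bun_P$, not from any abstract Verdier self-duality of $k_{\Bun_P}$. So the mate \eqref{e:CT and Psi 2} carries genuine geometric content that your kernel argument does not touch.

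The paper's proof supplies exactly this missing input as \lemref{l:two CT}: a commutative square in $\Dmod(\Bun_M\times \Bun_M)$ relating the two unit maps (for $(\Eis_!,\on{CT}^-_!)$ and $(\Eis^-_!,\on{CT}_!)$) to the common object $(\on{CT}^-_!\otimes \on{CT}_!)\circ (\Delta_{\Bun_G})_!(k_{\Bun_G})$, identified via base change with $(\sfq^-\underset{\Bun_G}\times \sfq)_!(k_{\Bun_{P^-}\underset{\Bun_G}\times \Bun_P})$. Both circuits are then shown to equal the single map induced by $\bj$. Once \lemref{l:two CT} is in hand, the paper unwinds \eqref{e:CT and Psi 2} explicitly as a composite of unit/counit maps and reduces it to the triangle identity for $(\Eis_!,\on{CT}^-_!)$. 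Your alternative route via restriction to quasi-compact opens would face the same obstacle: after all reductions you would still need to identify the unit of the weird adjunction with a concrete geometric map, which is the content of \lemref{l:two CT}.
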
 

\ssec{Proof of \propref{p:two maps}}

The proof of the proposition is \emph{not} a formal manipulation, as its statement involves the isomorphism
of \thmref{t:strange} for the two different parabolics, namely, $P$ and $P^-$. The corresponding geometric input is provided by 
\lemref{l:two CT} below.

\sssec{}

Let us identify 
$$\on{CT}^-_*\simeq \on{CT}_! \text{ and } \Eis_{\on{co},*}\simeq (\on{CT}^-_!)^\vee$$
via \thmref{t:weird adj}. 

\medskip

Then the map 
$$\psId_{\Bun_M,!}\circ \on{CT}_{\on{co},?} \to \on{CT}_!\circ \psId_{\Bun_G,!},$$
corresponding to \eqref{e:CT and Psi 2}, equals by definition the composition
\begin{multline*} 
\psId_{\Bun_M,!}\circ \on{CT}_{\on{co},?} \to \on{CT}_!\circ \Eis^-_!\circ \psId_{\Bun_M,!}\circ \on{CT}_{\on{co},?} 
\overset{\text{\eqref{e:weird adj -}}}
\simeq \\
\simeq \on{CT}_!\circ \psId_{\Bun_G,!}\circ  \Eis_{\on{co},*} \circ \on{CT}_{\on{co},?} \simeq
\on{CT}_!\circ \psId_{\Bun_G,!}\circ (\on{CT}^-_!)^\vee\circ (\Eis_!)^\vee = \\
= \on{CT}_!\circ \psId_{\Bun_G,!}\circ (\Eis_!\circ \on{CT}^-_!)^\vee \to \on{CT}_!\circ \psId_{\Bun_G,!},
\end{multline*}
where the first arrows comes from the unit of the $(\Eis^-_!,\on{CT}_!)$-adjunction, and the last arrow comes from 
the co-unit of the $(\Eis_!,\on{CT}^-_!)$-adjunction. 

\medskip

This corresponds to the following map of objects in $\Dmod(\Bun_G\times \Bun_M)$:
\begin{multline} \label{e:comp 2}
(\Eis_!\otimes \on{Id}_{\Dmod(\Bun_M)})\circ (\Delta_{\Dmod(\Bun_M)})_! (k_{\Bun_M}) \to \\
\to (\Eis_!\otimes (\on{CT}_!\circ \Eis^-_!))\circ (\Delta_{\Dmod(\Bun_M)})_! (k_{\Bun_M})=\\
=(\Eis_!\otimes \on{CT}_!)\circ (\on{Id}_{\Dmod(\Bun_M)}\otimes \Eis^-_!) \circ (\Delta_{\Dmod(\Bun_M)})_! (k_{\Bun_M})\simeq \\
\simeq (\Eis_!\otimes \on{CT}_!)\circ (\on{CT}^-_!\circ \on{Id}_{\Dmod(\Bun_G)}) \circ (\Delta_{\Dmod(\Bun_G)})_! (k_{\Bun_G})= \\
=((\Eis_!\circ \on{CT}^-_!)\otimes \on{CT}_!) \circ (\Delta_{\Dmod(\Bun_G)})_! (k_{\Bun_G})\to \\
\to (\on{Id}_{\Dmod(\Bun_G)}\otimes \on{CT}_!) \circ (\Delta_{\Dmod(\Bun_G)})_! (k_{\Bun_G}),
\end{multline}
where the isomorphism between the 3rd and the 4th lines is
\begin{multline*} 
(\on{Id}_{\Dmod(\Bun_M)}\otimes \Eis^-_!) \circ (\Delta_{\Dmod(\Bun_M)})_! (k_{\Bun_M})\simeq \\
\simeq ((\sfq^-\times \sfp^-)\circ \Delta_{\Bun_{P^-}})_!(k_{\Bun_{P^-}})\simeq \\
\simeq (\on{CT}^-_!\circ \on{Id}_{\Dmod(\Bun_G)}) \circ (\Delta_{\Dmod(\Bun_G)})_! (k_{\Bun_G}),
\end{multline*}
used in the proof of \thmref{t:strange}. 

\medskip

The assertion of the proposition amounts to showing that the composed map in \eqref{e:comp 2} equals
\begin{multline*}
(\Eis_!\otimes \on{Id}_{\Dmod(\Bun_M)})\circ (\Delta_{\Dmod(\Bun_M)})_! (k_{\Bun_M}) \simeq \\
\simeq  ((\sfp\times \sfq)\circ \Delta_{\Bun_P})_!(k_{\Bun_P})\simeq \\
\simeq (\on{Id}_{\Dmod(\Bun_G)}\otimes \on{CT}_!) \circ (\Delta_{\Dmod(\Bun_G)})_! (k_{\Bun_G}).
\end{multline*}

\sssec{}

The geometric input is provided by the following assertion, proved at the end of this subsection:

\begin{lem} \label{l:two CT}
The following diagram commutes:
$$
\CD
(\Delta_M)_! (k_M) @>>>  (\on{Id}_M\otimes (\on{CT}_!\circ \Eis^-_!))\circ (\Delta_M)_! (k_M) \\
@VVV   @VV{\sim}V   \\
((\on{CT}^-_!\circ \Eis_!)\otimes \on{Id}_M) \circ (\Delta_M)_! (k_M) & & 
(\on{Id}_M\otimes \on{CT}_!)\circ (\on{Id}_M\otimes \Eis^-_!) \circ (\Delta_M)_! (k_M) \\
@V{\sim}VV    @VV{\sim}V  \\
(\on{CT}^-_!\otimes \on{Id}_M)\circ (\Eis_!\otimes \on{Id}_M) \circ (\Delta_M)_! (k_M) & & 
(\on{Id}_M\otimes \on{CT}_!)\circ (\on{CT}^-_!\otimes \on{Id}_G) \circ (\Delta_G)_! (k_G) \\
@V{\sim}VV   @VV{\sim}V  \\
(\on{CT}^-_!\otimes \on{Id}_M)\circ (\on{Id}_G\otimes \on{CT}_!) \circ (\Delta_G)_! (k_G)
@>{\sim}>>  (\on{CT}^-_! \otimes \on{CT}_!) \circ (\Delta_G)_! (k_G)
\endCD
$$ 
where we use short-hand $\on{Id}_M,\Delta_M,k_M$ for $\on{Id}_{\Dmod(\Bun_M)}$, $\Delta_{\Bun_M}$ and $k_{\Bun_M}$,
respectively, and similarly for $G$. 
\end{lem}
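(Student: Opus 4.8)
The plan is to unwind both composites in the diagram into honest $!$-pull-push operations along stacks and to see that they produce the same arrow; the only non-formal ingredient will be a compatibility contained in the proof of \thmref{t:weird adj}.

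First I would set up a common target. Let $Z:=\Bun_P\times_{\Bun_G}\Bun_{P^-}$ be the fibre product over the maps $\sfp$ and $\sfp^-$, with projections $\on{pr}:Z\to\Bun_P$ and $\on{pr}^-:Z\to\Bun_{P^-}$, and put $\pi:=(\sfq^-\circ\on{pr}^-,\,\sfq\circ\on{pr}):Z\to\Bun_M\times\Bun_M$. Running the same base-change computations as in the proof of \thmref{t:strange} — now applied to $(\Delta_{\Bun_G})_!(k_{\Bun_G})$ and to $\psi:=(\sfp,\sfq):\Bun_P\to\Bun_G\times\Bun_M$, and once more along $\sfp^-\times\on{id}_{\Bun_M}$ — one obtains canonical isomorphisms identifying every object in the diagram other than $(\Delta_{\Bun_M})_!(k_{\Bun_M})$ with the single object $\pi_!(k_Z)$, in such a way that all arrows marked ``$\sim$'', all re-bracketings of compositions of functors acting on distinct tensor factors, and the bottom horizontal arrow become the identity. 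Thus the diagram commutes as soon as the two maps
$$(\Delta_{\Bun_M})_!(k_{\Bun_M})\longrightarrow\pi_!(k_Z)$$
— one induced by the unit $\on{Id}_{\Bun_M}\to\on{CT}^-_!\circ\Eis_!$ on the first tensor factor, the other by the unit $\on{Id}_{\Bun_M}\to\on{CT}_!\circ\Eis^-_!$ on the second — coincide.

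Next I would identify these two unit maps. By \thmref{t:weird adj} one has $\on{CT}^-_!\simeq\on{CT}_*$ and $\on{CT}_!\simeq\on{CT}^-_*$, and under these identifications the two units above are the units of the \emph{standard} adjunctions $(\Eis_!,\on{CT}_*)$ and $(\Eis^-_!,\on{CT}^-_*)$, i.e. the unit of the $(\sfq^*,\sfq_*)$-adjunction (resp. $((\sfq^-)^*,(\sfq^-)_*)$) followed by the unit of the $(\sfp_!,\sfp^!)$-adjunction (resp. $(\sfp^-_!,(\sfp^-)^!)$). The two Levi splittings $M\hookrightarrow P$, $M\hookrightarrow P^-$ provide a locally closed embedding $\iota:\Bun_M\hookrightarrow Z$ with $\pi\circ\iota=\Delta_{\Bun_M}$, which is exactly the locus singled out by the contracting $\BG_m$-action on $Z$ that underlies \thmref{t:weird adj} in \cite{DrGa3}. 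Chasing that construction through the contraction principle, I expect both of the maps $(\Delta_{\Bun_M})_!(k_{\Bun_M})\to\pi_!(k_Z)$ to be identified with one and the same arrow, the one governed by $\iota$ and the contraction; and this arrow — like $Z$, $\pi$ and $\iota$ — is invariant under the symmetry that exchanges $P\leftrightarrow P^-$ and flips the two factors of $\Bun_M\times\Bun_M$, while the two units are themselves exchanged by that symmetry. Hence they agree.

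The formal parts — the base changes in the first step and the interchange law for functors on disjoint tensor factors — are routine. I expect the main obstacle to be the second step: one has to go into the proof of \thmref{t:weird adj} in \cite{DrGa3} and verify that the isomorphism $\on{CT}_!\simeq\on{CT}^-_*$ produced there intertwines the standard adjunction units with the arrow governed by $\iota$ — that is, that the contraction-principle identification is compatible with units. This bookkeeping, rather than any new geometric input, is where the real work lies.
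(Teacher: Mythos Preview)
Your approach is essentially the paper's: reduce every object in the diagram (other than the source) to the single object $\pi_!(k_Z)$ via base change, and then check that the two unit maps $(\Delta_{\Bun_M})_!(k_{\Bun_M})\to\pi_!(k_Z)$ agree. The one real difference is that what you flag as the ``main obstacle'' --- verifying that under the identification of \thmref{t:weird adj} the unit of the $(\Eis_!,\on{CT}^-_!)$-adjunction becomes the map governed by the embedding $\Bun_M\hookrightarrow Z$ --- is not something one has to dig out of the proof in \cite{DrGa3}; it is recorded explicitly in \cite[Sect.~1.3.2]{DrGa3}, and the paper simply cites it. That embedding $\bj:\Bun_M\hookrightarrow Z$ is in fact \emph{open} (the big cell), not merely locally closed, and the map in question is the one induced by the $(\bj_!,\bj^*)$-counit. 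With this in hand, both circuits are visibly the same arrow and your $P\leftrightarrow P^-$ symmetry argument is not needed.
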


Using the lemma, we rewrite the map in \eqref{e:comp 2} as follows:
\begin{multline*} 
(\Eis_!\otimes \on{Id}_{\Dmod(\Bun_M)})\circ (\Delta_{\Dmod(\Bun_M)})_! (k_{\Bun_M}) \to \\ 
\to
((\Eis_!\circ \on{CT}^-_!\circ \Eis_!)\otimes  \on{Id}_{\Dmod(\Bun_M)})\circ (\Delta_{\Dmod(\Bun_M)})_! (k_{\Bun_M}) = \\
=
((\Eis_!\circ \on{CT}^-_!)\otimes \on{Id}_{\Dmod(\Bun_M)})\circ (\Eis_! \otimes  \on{Id}_{\Dmod(\Bun_M)})\circ (\Delta_{\Dmod(\Bun_M)})_! (k_{\Bun_M}) \simeq \\
\simeq 
((\Eis_!\circ \on{CT}^-_!)\otimes \on{Id}_{\Dmod(\Bun_M)})\circ (\on{Id}_{\Dmod(\Bun_G)}\otimes \on{CT}_!) \circ 
(\Delta_{\Dmod(\Bun_G)})_! (k_{\Bun_G})\to \\
\to (\on{Id}_{\Dmod(\Bun_G)}\otimes \on{CT}_!) \circ 
(\Delta_{\Dmod(\Bun_G)})_! (k_{\Bun_G}),
\end{multline*}
and further as
\begin{multline*} 
(\Eis_!\otimes \on{Id}_{\Dmod(\Bun_M)})\circ (\Delta_{\Dmod(\Bun_M)})_! (k_{\Bun_M}) \to \\ 
\to
((\Eis_!\circ \on{CT}^-_!\circ \Eis_!)\otimes  \on{Id}_{\Dmod(\Bun_M)})\circ (\Delta_{\Dmod(\Bun_M)})_! (k_{\Bun_M}) = \\
=
((\Eis_!\circ \on{CT}^-_!)\otimes \on{Id}_{\Dmod(\Bun_M)})\circ (\Eis_! \otimes  \on{Id}_{\Dmod(\Bun_M)})\circ (\Delta_{\Dmod(\Bun_M)})_! (k_{\Bun_M}) \to \\
\to 
(\Eis_! \otimes  \on{Id}_{\Dmod(\Bun_M)})\circ (\Delta_{\Dmod(\Bun_M)})_! (k_{\Bun_M}) \simeq  \\
\simeq (\on{Id}_{\Dmod(\Bun_G)}\otimes \on{CT}_!) \circ 
(\Delta_{\Dmod(\Bun_G)})_! (k_{\Bun_G}).
\end{multline*}

However, the composition
\begin{multline*} 
(\Eis_!\otimes \on{Id}_{\Dmod(\Bun_M)})\circ (\Delta_{\Dmod(\Bun_M)})_! (k_{\Bun_M}) \to \\ 
\to
((\Eis_!\circ \on{CT}^-_!\circ \Eis_!)\otimes  \on{Id}_{\Dmod(\Bun_M)})\circ (\Delta_{\Dmod(\Bun_M)})_! (k_{\Bun_M}) = \\
=
((\Eis_!\circ \on{CT}^-_!)\otimes \on{Id}_{\Dmod(\Bun_M)})\circ (\Eis_! \otimes  \on{Id}_{\Dmod(\Bun_M)})\circ (\Delta_{\Dmod(\Bun_M)})_! (k_{\Bun_M}) \to \\
\to 
(\Eis_! \otimes  \on{Id}_{\Dmod(\Bun_M)})\circ (\Delta_{\Dmod(\Bun_M)})_! (k_{\Bun_M})
\end{multline*}
is the identity map, as it is induced by the map
$$\Eis_!\to \Eis_!\circ \on{CT}^-_!\circ \Eis_!\to \Eis_!,$$
comprised by the unit and co-unit of the $(\Eis_!,\on{CT}^-_!)$-adjunction, and the assertion follows. 

\sssec{Proof of \lemref{l:two CT}}

Let us recall from \cite[Sect. 1.3.2]{DrGa3} that the unit for the $(\Eis_!,\on{CT}^-_!)$ can be described as follows. The functor
$$\on{CT}_!\circ \Eis^-_!:\Dmod(\Bun_M)\to \Dmod(\Bun_M)$$
is given by
$$(\sfq)_!\circ (\sfp)^* \circ (\sfp^-)_!\circ (\sfq^-)^*,$$
which by base change along the diagram
$$
\xy
(-20,0)*+{\Bun_G}="X";
(20,0)*+{\Bun_M,}="Y";
(0,20)*+{\Bun_P}="Z";
(-40,20)*+{\Bun_{P^-}}="W";
(-60,0)*+{\Bun_M}="U";
(-20,40)*+{\Bun_{P^-}\underset{\Bun_G}\times \Bun_P}="V";
(-20,70)*+{\Bun_M}="T";
{\ar@{->}^{\sfp} "Z";"X"};
{\ar@{->}_{\sfq} "Z";"Y"};
{\ar@{->}_{\sfp^-} "W";"X"};
{\ar@{->}^{\sfq^-} "W";"U"}; 
{\ar@{->}^{'\sfp^-} "V";"Z"};
{\ar@{->}_{'\sfp} "V";"W"};
{\ar@{->}_{\bj} "T";"V"};
{\ar@{->}^{\on{id}} "T";"Y"};
{\ar@{->}_{\on{id}} "T";"U"};
\endxy
$$
can be rewritten as
$$(\sfq)_!\circ ({}'\sfp^-)_! \circ ({}'\sfp)^*\circ (\sfq^-)^*.$$

The natural transformation
$$\on{Id}_{\Dmod(\Bun_M)}\to \on{CT}_!\circ \Eis^-_!$$
is given by
$$(\on{id}_{\Bun_M})_!\circ (\on{id}_{\Bun_M})^*=
(\sfq)_!\circ  ({}'\sfp^-)_! \circ \bj_!\circ \bj^* \circ ({}'\sfp)^*\circ (\sfq^-)^*\to
(\sfq)_!\circ ({}'\sfp^-)_! \circ ({}'\sfp)^*\circ (\sfq^-)^*,$$
where the second arrow comes from the $(\bj_!,\bj^*)$-adjunction.

\medskip

The natural transformation
$$\on{Id}_{\Dmod(\Bun_M)}\to \on{CT}^-_!\circ \Eis_!$$
is described similarly, with the roles of $P$ and $P^-$ swapped. 

\medskip

Base change along
$$
\CD
\Bun_{P^-}\underset{\Bun_G}\times \Bun_P @>>>  \Bun_{P^-}\times \Bun_P  @>{\sfq^-\times \sfq}>> \Bun_M\times \Bun_M  \\
@VVV   @V{\sfp^-\times \sfp}VV  \\ 
\Bun_G   @>{\Delta_{\Bun_G}}>> \Bun_G\times \Bun_G
\endCD
$$ 
implies that the object
$$(\on{CT}^-_! \otimes \on{CT}_!) \circ (\Delta_{\Bun_G})_! (k_{\Bun_G})\in \Dmod(\Bun_M\times \Bun_M)$$
identifies with 
$$(\sfq^-\underset{\Bun_G}\times \sfq)_!(k_{\Bun_{P^-}\underset{\Bun_G}\times \Bun_P}),$$
where $\sfq^-\underset{\Bun_G}\times \sfq$ denotes the map
$$\Bun_{P^-}\underset{\Bun_G}\times \Bun_P \to \Bun_{P^-}\times \Bun_P  \overset{\sfq^-\times \sfq}\longrightarrow \Bun_M\times \Bun_M.$$

Now, the above description of the unit of the adjunctions implies that both circuits in the diagram in \lemref{l:two CT} are equal to the map 
$$(\Delta_{\Bun_M})_! (k_{\Bun_M})\to (\sfq^-\underset{\Bun_G}\times \sfq)_!(k_{\Bun_{P^-}\underset{\Bun_G}\times \Bun_P}),$$
that corresponds to the open embedding
$$\Bun_M \overset{\bj}\hookrightarrow \Bun_{P^-}\underset{\Bun_G}\times \Bun_P.$$

\qed

\ssec{Proof of \thmref{t:duality}}

We are finally ready to prove \thmref{t:duality}.

\medskip

We proceed by induction on the semi-simple rank of $G$. The case of a torus follows immediately 
from \corref{c:equiv on cusp}. Hence, we will assume that the assertion holds for all proper Levi
subgroups of $G$.

\sssec{}

\thmref{t:strange}, together with the induction hypothesis, imply that the essential image of $\Dmod(\Bun_G)_{\on{co,Eis}}$ under
$\psId_{\Bun_G,!}$ generates $\Dmod(\Bun_G)_{\Eis}$. 

\medskip

\corref{c:equiv on cusp} implies that the essential image of $\Dmod(\Bun_G)_{\on{co,cusp}}$
under $\psId_{\Bun_G,!}$ generates (in fact, equals) $\Dmod(\Bun_G)_{\on{cusp}}$.

\medskip

Hence, it remains to show that $\psId_{\Bun_G,!}$ is fully faithful. 

\sssec{}

The fact that $\psId_{\Bun_G,!}$ 
induces an isomorphism 
\begin{equation} \label{e:Hom to check}
\Hom_{\Dmod(\Bun_G)_{\on{co}}}(\CF',\CF)\to 
\Hom_{\Dmod(\Bun_G)}(\psId_{\Bun_G,!}(\CF'),\psId_{\Bun_G,!}(\CF))
\end{equation}
for $\CF'\in \Dmod(\Bun_G)_{\on{co,cusp}}$ follows from \propref{p:Hom out of cusp}.

\medskip

Hence, it remains to show that \eqref{e:Hom to check} is an isomorphism for 
$\CF'\in \Dmod(\Bun_G)_{\on{co,Eis}}$. The latter amounts to showing that
the functor $\psId_{\Bun_G,!}$ induces an isomorphism
\begin{multline*} \label{e:Hom out of cusp}
\Hom_{\Dmod(\Bun_G)_{\on{co}}}(\Eis_{\on{co},*}(\CF_M),\CF)\to  \\
\to \Hom_{\Dmod(\Bun_G)}(\psId_{\Bun_G,!}\circ \Eis_{\on{co},*}(\CF_M),\psId_{\Bun_G,!}(\CF))
\end{multline*}
for $\CF_M\in \Dmod(\Bun_M)_{\on{co}}$ for a \emph{proper} parabolic $P$ with
Levi quotient $M$. 

\sssec{}

Note that for $\CF_M\in \Dmod(\Bun_M)_{\on{co}}$ and $\CF\in \Dmod(\Bun_G)_{\on{co}}$
we have a commutative diagram:
$$
\CD
\Hom(\Eis_{\on{co},*}(\CF_M),\CF) @>>>  \Hom(\psId_{\Bun_G,!}\circ \Eis_{\on{co},*}(\CF_M),\psId_{\Bun_G,!}(\CF))  \\
& & @V{\text{\eqref{e:weird adj -}}}V{\sim}V \\
& &  \Hom(\Eis^-_!\circ \psId_{\Bun_M,!}(\CF_M),\psId_{\Bun_G,!}(\CF)) \\
@V{\sim}VV  @V{\sim}VV  \\
& &  \Hom(\psId_{\Bun_M,!}(\CF_M),\on{CT}^-_*\circ \psId_{\Bun_G,!}(\CF)) \\
& & @A{\text{\eqref{e:CT and Psi 2}}}AA   \\
\Hom(\CF_M,\on{CT}_{\on{co},?}(\CF)) @>>>  \Hom(\psId_{\Bun_M,!}(\CF_M),\psId_{\Bun_M,!}\circ \on{CT}_{\on{co},?}(\CF)). 
\endCD
$$

\medskip

The bottom horizontal arrow in the above diagram is an isomorphism by the induction hypothesis. 
Now, \propref{p:two maps} implies that the lower right vertical arrow is also
an isomorphism. 

\medskip

Hence, the upper horizontal arrow is also an isomorphism, as required.

\end{document}